\documentclass[12pt, letterpaper]{amsart}
\usepackage[dvipsnames,table,xcdraw]{xcolor}
\usepackage{amsfonts,amsthm,mathtools}
\usepackage[all,cmtip]{xy}
\usepackage{fullpage}
\usepackage{bm}
\usepackage{amsmath}
\usepackage{amssymb}
\usepackage{enumerate}
\usepackage{tikz}
\usepackage{hyperref}
\usepackage{cleveref}
\usepackage{verbatim}
\usepackage{cancel}
\usepackage{float}
\usepackage{todonotes}
\usepackage{kotex}
\usepackage{longtable}

\usepackage{graphicx} 

\newcommand{\Sym}{\operatorname{Sym}}
\newcommand{\Pic}{\operatorname{Pic}}
\newcommand{\gon}{\operatorname{gon}}

\newcommand{\Q}{\mathbb{Q}}

\newcommand{\C}{\mathbb{C}}

\newcommand{\N}{\mathbb{N}}
\newcommand{\Z}{\mathbb{Z}}
\newcommand{\F}{\mathbb{F}}
\newcommand{\PP}{\mathbb{P}}

\newcommand{\SL}{\operatorname{SL}}

\newtheorem{lem}{Lemma}[section]
\newtheorem{thm}[lem]{Theorem}
\Crefrangeformat{thm}{Theorems #3#1#4-#5#2#6}
\newtheorem{proposition}[lem]{Proposition}
\newtheorem{prop}[lem]{Proposition}
\Crefrangeformat{prop}{Propositions #3#1#4-#5#2#6}
\newtheorem{cor}[lem]{Corollary}

\theoremstyle{definition}
\newtheorem{remark}[lem]{Remark}
\newtheorem{definition}[lem]{Definition}

\crefname{lem}{Lemma}{Lemmas}
\Crefname{lem}{Lemma}{Lemmas}

\newcommand{\github}[1]{\href{https://github.com/nt-lib/density-degree-5-X0/blob/v0.1.1/#1}{\path{#1}}}
\newcommand{\lmfdbec}[3]{\href{https://www.lmfdb.org/EllipticCurve/Q/#1/#2/#3}{#1.#2#3}}
\newcommand{\maarten}[1]{{\color{ForestGreen} \sf $\diamondsuit\diamondsuit\diamondsuit$ Maarten: [#1]}}
\newcommand{\petar}[1]{{\color{purple} \sf $\diamondsuit\diamondsuit\diamondsuit$ Petar: [#1]}}
\newcommand{\daeyeol}[1]{{\color{blue} \sf $\diamondsuit\diamondsuit\diamondsuit$ Daeyeol: [#1]}}

\title{Modular curves $X_0(N)$ of density degree $5$}


\begin{document}

\begin{abstract}
    We determine all modular curves $X_0(N)$ with minimum density degree $5$, i.e. all curves $X_0(N)$ with infinitely many points of degree $5$ and only finitely many points of degree $d\leq4$. As a consequence, the problem of determining all curves $X_0(N)$ with infinitely many points of degree $5$ remains open for only $30$ levels $N$.
\end{abstract}

\subjclass{11G18, 14G35, 14K02}
\keywords{Modular curves, Density Degree, Pentaelliptic, Quintic point}

\author{\sc Maarten Derickx}
\address{Maarten Derickx\\
University of Zagreb\\  
Bijeni\v{c}ka Cesta 30 \\
10000 Zagreb\\
Croatia}
\email{maarten@mderickx.nl}
\urladdr{http://www.maartenderickx.nl/}

\author{\sc Wontae Hwang}
\address{Wontae Hwang\\ Department of Mathematics, and Institute of Pure and Applied Mathematics, Jeonbuk National University \\
Baekje-daero, Deokjin-gu, Jeonju-si, Jeollabuk-do, 54896 \\
South Korea}
\email{hwangwon@jbnu.ac.kr}

\author{\sc Daeyeol Jeon}
\address{Daeyeol Jeon \\ Department of Mathematics Education \\ Kongju National University \\ Gongju, 32588 South Korea}
\email{dyjeon@kongju.ac.kr}

\author{\sc Petar Orli\'c}
\address{Petar Orli\'c \\
University of Zagreb\\  
Bijeni\v{c}ka Cesta 30 \\
10000 Zagreb\\
Croatia}
\email{petar.orlic@math.hr}

\maketitle

\section{Introduction}

Let $C$ be a smooth curve defined over a number field $K$. 
Determining whether the set of points of degree $\le d$ on $C$ is finite or infinite is an important problem in arithmetic geometry.
Here, a point of degree $d$ means a closed point $P$ such that $[K(P):K]=d$.
Faltings' Theorem solves the base case $d=1$.

\begin{thm}[Faltings' Theorem]
Let $K$ be a number field and let $C$ be a non-singular curve defined over $K$ of genus $g\geq2$. Then the set $C(K)$ is finite.
\end{thm}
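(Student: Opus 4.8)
The plan is to deduce the statement from finiteness theorems about abelian varieties, following Faltings' original strategy. The first move is geometric: replace $C$ by its Jacobian $\Jac(C)$, a principally polarized abelian variety of dimension $g$ over $K$, and aim for the \emph{Shafarevich conjecture} --- that for fixed $g$, fixed $K$, and a fixed finite set $S$ of places, there are only finitely many isomorphism classes of principally polarized abelian varieties of dimension $g$ over $K$ with good reduction outside $S$. Granting this, finiteness of $C(K)$ follows by \emph{Parshin's trick}: to each $P \in C(K)$ one attaches a finite cover $C_P \to C$ étale away from $P$, of genus and ramification bounded independently of $P$, hence with good reduction outside a fixed $S$. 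Passing to Jacobians and invoking Torelli's theorem transports the Shafarevich conjecture to curves, bounding the number of possible $C_P$; a de Franchis-type argument shows each cover arises from finitely many $P$, so $C(K)$ is finite.

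The analytic core is the Shafarevich conjecture for abelian varieties, which I would approach through the \emph{Faltings height} $h(A)$, the Arakelov degree of the metrized Hodge bundle of the Néron model. Two finiteness statements must be combined. There are finitely many isogeny classes of abelian varieties with good reduction outside $S$: the characteristic polynomials of Frobenius at primes away from $S$ take finitely many values by the Weil bounds, and by \v{C}ebotarev's theorem these determine the semisimplified Galois representation, hence (granting semisimplicity) the isogeny class. And each isogeny class contains finitely many isomorphism classes: here one shows $h(A)$ is bounded as $A$ varies over the class, after which a Northcott property on the Siegel moduli space $\mathcal{A}_g$ --- finitely many points of bounded height and bounded degree --- yields finiteness.

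The genuine obstacle lies in the two facts just used as black boxes, the \emph{Tate conjecture} and the boundedness of height within an isogeny class, which Faltings proves together. One must understand how $h(A)$ changes under an isogeny $A \to B$, the variation being governed by the primes at which the isogeny fails to preserve the Hodge bundle; controlling this while ruling out an infinite descending chain of isogenies of unbounded height is the crux. The argument is self-reinforcing: an infinite family of pairwise non-isomorphic isogenous abelian varieties of bounded height would, via their Tate modules, produce an infinite chain of Galois-stable lattices incompatible with semisimplicity, while semisimplicity and the isomorphism $\End(A) \otimes \Q_\ell \xrightarrow{\sim} \End_{\Gal}(T_\ell A \otimes \Q_\ell)$ are in turn extracted from finiteness of height. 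Disentangling this interplay of Arakelov heights and $\ell$-adic Galois representations is the hardest part of the whole proof.

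Finally, I would record that Vojta, as simplified by Bombieri, gives an alternative proof that avoids the Shafarevich conjecture altogether: one works on $C \times C$, constructs an auxiliary section of a suitable line bundle that a hypothetical pair of rational points of large, comparable height forces to be small, and contradicts a Dyson/Roth-type bound on its index together with the Mordell--Weil theorem for $\Jac(C)$. This trades the representation-theoretic difficulty for a delicate quantitative Diophantine inequality, but the moral is the same: the whole difficulty is in producing a contradiction from the mere assumption of infinitely many rational points.
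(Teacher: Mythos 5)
You should first note the context: the paper does not prove this statement at all --- Faltings' Theorem is quoted as a known, foundational result, so there is no ``paper proof'' to compare against, only the literature. Measured against that, your write-up is an accurate high-level map of Faltings' actual 1983 argument (Parshin's trick reducing Mordell to the Shafarevich conjecture, Torelli plus a de Franchis-type finiteness, finiteness of isogeny classes via Weil bounds and \v{C}ebotarev, and the Faltings height with a Northcott property on the moduli space), and your closing paragraph correctly identifies the Vojta--Bombieri route as a genuinely independent alternative. So the \emph{approach} is the right one.

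But as a proof it has genuine gaps, and you flag them yourself: everything that is actually hard is cited as a black box. Concretely, (i) the boundedness of the Faltings height within an isogeny class --- the heart of the matter, requiring the analysis of how $h(A)$ changes under an isogeny via the kernel's group scheme and Raynaud's results on finite flat group schemes, together with the key input that the relevant Galois representations are crystalline/of bounded ramification at $p$; (ii) semisimplicity of the Tate module and the isomorphism $\End(A)\otimes\Q_\ell \cong \End_{\Gal}(T_\ell A\otimes \Q_\ell)$, which you correctly observe is entangled with (i) but do not disentangle; (iii) the Northcott property for the Faltings height, which is not formal --- it requires comparing $h(A)$ with a projective height on a compactification of the Siegel moduli space, plus Zarhin's trick to reduce to the principally polarized case; and (iv) in Parshin's construction, the covers $C_P$ are defined only over a controlled finite extension of $K$, so the Shafarevich statement must be proved uniformly over all fields of bounded degree, a point your sketch passes over. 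None of these can be repaired in a few lines; each is a substantial theorem. Your proposal is best described as a correct and well-organized survey of the known proof strategy rather than a proof, which is consistent with how the paper treats the statement: as an imported pillar, not something to be re-derived.
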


Therefore, the curve $C$ has infinitely many $K$-rational points if and only if $C$ is isomorphic to $\mathbb{P}^1$ ($g=0$) or $C$ is an elliptic curve ($g=1$) with positive $K$-rank. We now move on to the case $d=2$.

\begin{thm}[{Harris, Silverman: \cite[Corollary 3]{HarrisSilverman91}}]
    Let $K$ be a number field, and let $C/K$ be a curve of genus at least $2$. Assume that $C$ is neither hyperelliptic nor bielliptic. Then the set of quadratic points on $C/K$ is finite.
\end{thm}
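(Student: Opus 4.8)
The plan is to convert the statement about quadratic points into a statement about rational points on a surface inside an abelian variety, and then apply the strong form of Faltings' theorem. A point of degree $2$ on $C$ is, up to support, a $K$-rational point of the symmetric square $\Sym^2 C$, and conversely a $K$-point of $\Sym^2 C$ is either a genuine quadratic point or a sum of two $K$-rational points. Since $g:=\text{genus}(C)\geq 2$, Faltings' Theorem makes $C(K)$ finite, so "infinitely many quadratic points" is equivalent to "$(\Sym^2 C)(K)$ is infinite"; I want to show the latter fails. A genus-$2$ curve is hyperelliptic, so the hypotheses let me assume $g\geq 3$. Fixing a $K$-rational base divisor, the Abel--Jacobi map gives $\phi\colon \Sym^2 C\to \Jac(C)=:J$ with image a surface $W_2$, and $\phi$ is a closed embedding exactly when $C$ has no $g^1_2$, i.e. exactly when $C$ is non-hyperelliptic. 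Thus under our hypotheses $\Sym^2 C\cong W_2$, and the question becomes whether $W_2(K)$ is infinite.

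Next I would invoke Faltings' theorem on rational points of subvarieties of abelian varieties (the Mordell--Lang statement): $W_2(K)$ is contained in a finite union of translates $B_i+x_i$ of abelian subvarieties $B_i\subseteq J$ with each $B_i+x_i\subseteq W_2$. If $W_2(K)$ were infinite, some translate would satisfy $\dim B_i\geq 1$, so $W_2\cong \Sym^2 C$ would contain a translate of a positive-dimensional abelian subvariety. The two-dimensional case is excluded geometrically: a translate $B+x$ of a $2$-dimensional abelian subvariety filling the surface would force $\Sym^2 C\cong B+x$ to be an abelian surface, contradicting that $\Sym^2 C$ is of general type for $g\geq 3$. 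Hence the only surviving possibility is a translate $E\subseteq W_2\cong\Sym^2 C$ of an elliptic curve.

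The heart of the matter, and the step I expect to be the main obstacle, is to show that such an $E$ forces $C$ to be bielliptic. I would pull $E$ back along the quotient $q\colon C\times C\to \Sym^2 C$ to a curve $\widetilde E\subseteq C\times C$, invariant under the swap involution $\iota$, with projections $\pi_1,\pi_2\colon \widetilde E\to C$. First one checks $\pi_1$ (hence $\pi_2$) is non-constant: if $\pi_1$ had image a single point $P_0$, then $E$ would be a translate of the Abel--Jacobi image of $C$ in $J$, forcing $C$ itself to be elliptic and contradicting $g\geq 2$. The delicate part is controlling the degree of $\pi_1$: combining the $2{:}1$ cover $\widetilde E\to E$ with the fact that $E$ has genus $1$, one shows that $\pi_1$ is birational, so that $C\cong\widetilde E$ carries the degree-$2$ map $\widetilde E\to E$ onto an elliptic curve, i.e. $C$ is bielliptic. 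Projecting $J\to J/\mathcal E$ for the abelian subvariety $\mathcal E$ underlying $E$ and studying the induced map $C\to J/\mathcal E$ supplies the bookkeeping needed to pin the degree down to $2$; this is where the genuine geometric work lies.

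Finally, reading the contrapositive closes the argument. If $C$ is neither hyperelliptic nor bielliptic, then $\phi$ is an embedding, so $(\Sym^2 C)(K)=W_2(K)$, and $W_2$ contains no translate of a positive-dimensional abelian subvariety (the elliptic case would make $C$ bielliptic and the surface case is already excluded). By Faltings' theorem on subvarieties of abelian varieties, $W_2(K)$ is therefore finite, whence $\Sym^2 C$ and in turn $C$ have only finitely many quadratic points.
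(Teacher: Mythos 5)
First, a remark on the comparison itself: the paper does not prove this statement — it quotes it as \cite[Corollary 3]{HarrisSilverman91} — so the relevant benchmark is the original Harris--Silverman argument. Your architecture reproduces it faithfully: identify quadratic points with $K$-points of $\Sym^2 C$ modulo the finitely many pairs of rational points, embed $\Sym^2 C \cong W_2$ into $\Jac(C)$ (valid exactly in the non-hyperelliptic case, and genus $2$ is excluded since it is hyperelliptic), invoke Faltings' theorem on rational points of subvarieties of abelian varieties to reduce to a translate of a positive-dimensional abelian subvariety inside $W_2$, kill the two-dimensional case (your general-type argument works; even more cheaply, $h^{2,0}(\Sym^2 C) = g(g-1)/2 \geq 3 \neq 1$ rules out an abelian surface), and reduce to: an elliptic translate $E \subseteq W_2$ forces $C$ bielliptic. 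All of this is sound and is exactly the Harris--Silverman/Abramovich--Harris strategy.

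The genuine gap is the step you yourself flag: the claim that $\pi_1 \colon \widetilde{E} \to C$ is birational is asserted, not proved, and it is precisely the entire content of the key lemma (Harris--Silverman's Lemma, or the $d=2$ case of Abramovich--Harris's theorem on abelian varieties in $W_d$). Concretely: since $E$ is a full fiber of the projection $\Pic^2(C) \to \Jac(C)/\mathcal{E} =: B$, the fiber of $\pi_1$ over a general $P$ is a full fiber of the induced map $u \colon C \to B$, so $\deg \pi_1$ equals the degree of $u$ onto its image — equivalently the intersection number of the Abel--Jacobi curve with $\mathcal{E}$-cosets — and nothing you have written excludes $\deg \pi_1 \geq 2$, in which case $C \cong \widetilde{E}$ fails and with it your route to the bielliptic map. (If $\deg u = 2$ with elliptic image one still gets bielliptic, but through $u$ rather than through $\widetilde{E}$; if $\deg u = 2$ with higher-genus image, or $\deg u \geq 3$, a separate argument is needed to derive a contradiction, and naive genus counts via Riemann--Hurwitz on $\widetilde{E} \to E$ and $\widetilde{E} \to C$ do not suffice, since the ramification of $\widetilde{E} \to E$ is governed by the a priori large intersection of $E$ with the locus $\{[2P]\}$.) Two smaller omissions in the same step: you must handle the case that $\widetilde{E}$ is reducible (if a component maps birationally to $E$, one gets an elliptic curve dominating $C$, a contradiction — this needs saying), and the claim that $\widetilde{E} \to E$ has degree $2$ uses non-hyperellipticity (fibers are the linear systems $|D|$, which must be single points). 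So the proposal is a correct skeleton of the standard proof, but the pivotal geometric lemma is missing, and as stated your intermediate claim is stronger than what the quotient-by-$\mathcal{E}$ bookkeeping automatically yields.
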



\begin{definition}\cite[Page 1]{KV2025}
    Let $C$ be a curve defined over a number field $K$. The {\it density degree set} $\delta(C/K)$ is the set of integers $d$ for which the collection of closed points of degree $d$ on $C$ is infinite. The {\it minimum density degree}\footnote{In previous versions of \cite{KV2025}, the minimum density degree was called \textit{the arithmetic degree of irrationality} $\textup{a.irr}_K C$.} $\min(\delta(C/K))$ is the smallest integer $d\geq1$ such that $C$ has infinitely many closed points of degree $d$ over $K$, or equivalently:
    $$\min(\delta(C/K)) =\min\left\{d 
\in \N \mid \#\left\{\cup_{[F:K]=d} C(F)\right\}=\infty\right\}.$$
    We also define the {\it potential density degree set} $\wp(C/K):=\bigcup_{[L:K]<\infty}\delta(C/L)$, and analogously the {\it minimum potential density degree}
    $$\min(\wp(C/K)) :=\min \bigcup_{[L:K]<\infty}\delta(C/L).$$
\end{definition}

Abramovich and Harris \cite{AbramovichHarris91} gave a conjecture
$$\wp(C/K)\leq d \iff C \textup{ admits a map of degree} \leq d \textup{ to } \mathbb{P}^1 \textup{ or an elliptic curve}$$
which they proved for $d=2,3$. However, Debarre and Fahlaoui constructed counterexamples to the conjecture for $d\geq4$ \cite{DebarreFahlaoui}.

A first step in determining the modular curves $X_0(N)$ of minimum density degree $5$ is to show that for $N$ large enough, the modular curve $X_0(N)$ has only finitely many points of degree $\leq 5$. Or equivalently, for $N$ large enough, the minimum density degree of $X_0(N)$ is strictly larger than $5$.

A key ingredient to do this is the following theorem by Kadets and Vogt, that allows one to study the minimum density degree of a curve in terms of maps to lower-genus curves when the genus of the given curve is large enough.
\begin{thm}[{\cite[Theorem 1.3]{KV2025}}]\label{KVthm}
Suppose that $C$ is a nice\footnote{nice here means smooth, projective, and geometrically integral} curve of genus $g$ over a number field $K$ and $\min(\delta (C/K))=d.$ Let $m:=\lceil{d/2}\rceil -1$ and let $\epsilon := 3d-1-6m < 6.$ Then one of the following holds:
\vskip 0.1in
(1) There exists a non-constant morphism of curves $\phi \colon C \rightarrow Y$ of degree at least $2$ such that $d = \min(\delta(Y/K)) \cdot \deg \phi.$
\vskip 0.1in
(2) $g \leq \max \left(\frac{d(d-1)}{2} + 1, 3m (m-1)+m \epsilon \right).$
\end{thm}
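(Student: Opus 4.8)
The plan is to parametrize the degree-$d$ points by a symmetric power and reduce their finiteness to a question about rational points on a subvariety of an abelian variety, where Faltings' theorem applies. A closed point of degree $d$ on $C$ is a $K$-rational effective divisor of degree $d$ that is irreducible over $K$, i.e. a $K$-point of $C^{(d)}:=\Sym^d C$ lying off the reducible locus $\bigcup_{a+b=d}\mathrm{im}(C^{(a)}\times C^{(b)})$. Thus the hypothesis $\min(\delta(C/K))=d$ furnishes infinitely many such $K$-points. First I would push them forward along the Abel--Jacobi map $\alpha\colon C^{(d)}\to\Pic^d(C)$, a torsor under $J=\Jac(C)$, and split by pigeonhole: either infinitely many of them share a single image class $w\in\Pic^d(C)$, or they have infinitely many distinct images in $W_d:=\alpha(C^{(d)})$.

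In the first case the fibre $\alpha^{-1}(w)$ is the complete linear system $|D|\cong\PP^r$ of a degree-$d$ divisor $D$ of class $w$; having infinitely many $K$-points forces $r\ge1$, and since its generic member is irreducible of degree $d\ge2$ the system is base-point-free, so restricting to a pencil gives a non-constant $\psi\colon C\to\PP^1$ of degree $d$. If $\psi$ is primitive it realizes conclusion (1) with $Y=\PP^1$, $\deg\phi=d$; otherwise $\psi$ factors as $C\xrightarrow{\phi}Y\to\PP^1$ with $\deg\phi\ge2$ and the degree-$d$ divisors become pullbacks of degree-$(d/\deg\phi)$ divisors on $Y$, again giving conclusion (1). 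In the second case $W_d(K)$ is infinite, so by Faltings' theorem in its Mordell--Lang form the Zariski closure of these image points is a finite union of translated abelian subvarieties contained in $W_d$; being infinite, it contains a positive-dimensional translate $x+B\subseteq W_d$ with $B\subseteq J$ carrying infinitely many of them.

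The heart of the argument, and the step I expect to be the main obstacle, is to convert this translated abelian subvariety into a morphism. I would appeal to the structure theory of subvarieties of Jacobians (in the spirit of Abramovich--Harris): pulling back the universal divisor over $x+B$ and tracking how its support varies cuts out a correspondence on $C$ whose quotient is a curve $Y$, together with a non-constant $\phi\colon C\to Y$, and the divisors in the family become pullbacks $\phi^\ast E$ of degree-$(d/\deg\phi)$ divisors $E$ on $Y$. The delicate points are (i) guaranteeing $\deg\phi\ge2$, equivalently $B\ne J$, and (ii) transferring minimality: the infinitely many $E$ give $\min(\delta(Y/K))\le d/\deg\phi$, and minimality of $d$ on $C$ (a strictly smaller family on $Y$ would pull back to one on $C$) upgrades this to equality, yielding $d=\min(\delta(Y/K))\cdot\deg\phi$ as required.

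Conclusion (2) then collects the residual cases where no morphism can be extracted, essentially $B=J$ so that $\alpha$ is surjective and $W_d=\Pic^d(C)$ (forcing $d\ge g$), or where the available linear systems are too special to produce a map. Here I would bound $g$ by Castelnuovo-type genus inequalities: Riemann--Roch and Clifford's theorem control the dimension $r$ of the relevant $g^r_d$, and Castelnuovo's bound applied to a suitable projective model yields the explicit thresholds. Indeed $3m(m-1)+m\epsilon$ is exactly the Castelnuovo genus bound for an irreducible nondegenerate curve of degree $3d$ in $\PP^7$ (writing $3d-1=6m+\epsilon$ with $0\le\epsilon<6$), while $\tfrac{d(d-1)}{2}+1$ governs the complementary low-dimensional regime, so taking the maximum covers both. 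The decisive difficulty throughout is the structural third step: extracting a genuine morphism with the correct degree bookkeeping from an abstract translated abelian subvariety is precisely what separates conclusion (1) from the genus bound (2).
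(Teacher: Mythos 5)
Your proposal cannot be compared to an in-paper proof, because the paper does not prove this statement: it is imported verbatim from Kadets--Vogt \cite[Theorem 1.3]{KV2025}. Measured against the actual Kadets--Vogt argument, your outer skeleton is right: parametrize degree-$d$ points by $\Sym^d C$, push forward along Abel--Jacobi, split by pigeonhole into the linear-system case and the case of infinitely many classes in $W_d$, apply Faltings' theorem in Mordell--Lang form to get a translated positive-rank abelian subvariety $x+B\subseteq W_d$, and your numerology for the second genus bound is exactly correct ($3m(m-1)+m\epsilon$ is the Castelnuovo bound for a nondegenerate curve of degree $3d$ in $\PP^7$, writing $3d-1=6m+\epsilon$). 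The degree bookkeeping $d=\min(\delta(Y/K))\cdot\deg\phi$ via pullback/pushforward and minimality is also fine \emph{once a morphism $\phi$ exists}.

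The genuine gap is the step you yourself flag as the heart of the argument: extracting a morphism $\phi\colon C\to Y$ from the translate $x+B\subseteq W_d$ ``in the spirit of Abramovich--Harris.'' This is known to be false for $d\geq 4$: Debarre and Fahlaoui \cite{DebarreFahlaoui} constructed curves on $\Sym^2(E)$ whose $W_d$ contains a translate of a positive-rank elliptic curve but which admit no map of degree $\leq d$ to $\PP^1$ or an elliptic curve --- these were built precisely to refute the correspondence mechanism you invoke, and they appear in this paper in \Cref{kadetsvogt1.2} and \Cref{kadetsvogt_prop_DF} exactly because they survive the dichotomy. Consequently your description of case (2) as ``essentially $B=J$, forcing $d\geq g$'' is wrong: Debarre--Fahlaoui curves have $\dim B=1$, Abel--Jacobi far from surjective, and genus within the stated bound. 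The actual proof replaces your step by a structure theory for the minimal $e$ such that $W_e^r$ contains a translate of $A$, with the dimension bound $\dim A\leq e/2-r$ of \Cref{DF_AV_dim} as geometric input, together with the ``subspace configuration'' machinery of \cite{KV2025} that analyzes the Galois action on the fibers; this is what shows that in the no-morphism regime the curve is either Debarre--Fahlaoui-like or has genus below the Castelnuovo maximum. Without an argument covering the translate-without-morphism case, your dichotomy between (1) and (2) does not close, and no Castelnuovo estimate applied to a linear system you have actually produced will repair it.
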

\begin{cor}\label{new_cond_quintic}
If $\min(\delta(X_0(N)/\Q))=5,$ and $g(X_0(N)) \geq 12,$ then $X_0(N)$ admits a $\Q$-rational map of degree $5$ to either $\PP^1$ or an elliptic curve $E$ of positive rank over $\Q$.
\end{cor}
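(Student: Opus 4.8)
The plan is to derive this as a direct specialization of \Cref{KVthm} to the case $C = X_0(N)$, $K = \Q$, and $d = 5$. First I would record the relevant constants in that theorem for $d = 5$: one has $m = \lceil 5/2 \rceil - 1 = 2$ and $\epsilon = 3\cdot 5 - 1 - 6\cdot 2 = 2$, which indeed satisfies $\epsilon < 6$ as required. Since $\min(\delta(X_0(N)/\Q)) = 5$ by hypothesis, \Cref{KVthm} applies and forces one of its two alternatives.

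Next I would rule out alternative (2) using the genus hypothesis. Plugging in the constants, the bound in case (2) reads
$$g(X_0(N)) \leq \max\left(\frac{5\cdot 4}{2} + 1,\ 3\cdot 2\cdot 1 + 2\cdot 2\right) = \max(11, 10) = 11.$$
This contradicts the assumption $g(X_0(N)) \geq 12$, so case (2) cannot occur. Hence alternative (1) holds: there is a non-constant morphism $\phi \colon X_0(N) \to Y$ of degree at least $2$, defined over $\Q$, with $5 = \min(\delta(Y/\Q)) \cdot \deg\phi$.

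The final step exploits the primality of $5$. Since $\deg\phi \geq 2$ and $\min(\delta(Y/\Q)) \geq 1$ is a positive integer whose product with $\deg\phi$ equals the prime $5$, the only possibility is $\deg\phi = 5$ and $\min(\delta(Y/\Q)) = 1$. The condition $\min(\delta(Y/\Q)) = 1$ says exactly that $Y$ has infinitely many $\Q$-rational points. By Faltings' Theorem and the ensuing trichotomy recalled in the introduction, this means $Y$ has genus $0$ with a rational point, hence $Y \cong \PP^1$, or $Y$ is an elliptic curve of positive rank over $\Q$; genus $\geq 2$ is excluded. This yields the desired degree-$5$ $\Q$-rational map, completing the argument. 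I do not anticipate a serious obstacle here, as the corollary is essentially a numerical unwinding of \Cref{KVthm}; the only points needing care are confirming that $Y$ and $\phi$ are furnished over $\Q$ (not merely over an extension) and that density degree $1$ correctly translates, via Faltings, into the stated dichotomy for $Y$.
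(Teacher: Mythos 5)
Your proposal is correct and follows essentially the same route as the paper's own proof: rule out case (2) of \Cref{KVthm} via the genus bound (you make explicit the computation $\max(11,10)=11$ that the paper leaves implicit), then use the primality of $5$ to force $\deg\phi=5$ and $\min(\delta(Y/\Q))=1$, and conclude via Faltings that $Y$ is $\PP^1$ or a positive rank elliptic curve over $\Q$. No gaps; your added care about $Y$ and $\phi$ being defined over $\Q$ is already built into the statement of \Cref{KVthm}, as the paper's proof also notes.
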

\begin{proof}
Since $g(X_0(N)) \geq 12$, case (2) of Theorem \ref{KVthm} cannot occur. This means that there exists a non-constant morphism of curves $\phi \colon X_0(N) \rightarrow Y$ of degree at least $2$ over $\mathbb{Q}$ with the property that $5= \min(\delta(Y/\Q)) \cdot \deg \phi.$
It follows that $\deg \phi = 5$ and $\min(\delta(Y/\Q))=1,$ and hence, $X_0(N)$ admits a $\mathbb{Q}$-rational map of degree $5$ to either $\PP^1$ or an elliptic curve of positive rank over $\Q$, as desired.
\end{proof}

Determining degree $d$ points over $\Q$ on modular curves is especially interesting because non-cuspidal points on modular curves represent isomorphism classes of elliptic curves. For example, points on curves $X_1(M,N)$ represent isomorphism classes of elliptic curves with torsion group $\Z/M\Z\times\Z/N\Z$, and points on curves $X_0(N)$ represent isomorphism classes of elliptic curves together with a cyclic $N$-isogeny.

Regarding the curves $C=X_1(M,N)$ and $K=\Q$, all cases when $C$ has infinitely many points of degree $d\leq6$ were determined by Mazur \cite{mazur77} (for $d=1$), Kenku, Momose, and Kamienny \cite{KM88, kamienny92} (for $d=2$), Jeon, Kim, and Schweizer \cite{JeonKimSchweizer04} (for $d=3$), Jeon, Kim, and Park \cite{JeonKimPark06} (for $d=4$), and Derickx and Sutherland \cite{DerickxSutherland17} (for $d=5,6$). Additionally, Derickx and van Hoeij \cite{derickxVH} determined all curves $X_1(N)$ which have infinitely many points of degree $d=7,8$ and Jeon determined all trielliptic \cite{Jeon2022} and tetraelliptic \cite{Jeon2025} curves $X_1(N)$ over $\Q$.

In this paper, we will study the curve $C=X_0(N)$ and $K=\Q$. The curve $X_0(N)$ has infinitely many rational points if and only if $g(X_0(N))=0$. This was proved by Mazur \cite{mazur78} and Kenku \cite{kenku1979, kenku1980_1, kenku1980_2, kenku1981}. 

Ogg \cite{Ogg74} determined all hyperelliptic curves $X_0(N)$, Bars \cite{Bars99} determined all bielliptic curves $X_0(N)$, as well as all curves $X_0(N)$ with infinitely many quadratic points, and Jeon \cite{Jeon2021} determined all curves $X_0(N)$ with infinitely many cubic points. All curves $X_0(N)$ with infinitely many quartic points were determined independently (and using different methods) by Hwang and Jeon \cite{Hwang2023}, and Derickx and Orlić \cite{DerickxOrlic23}.

\begin{thm}[Bars \cite{Bars99}]\label{thmquadratic}
    The modular curve $X_0(N)$ has infinitely many points of degree $2$ over $\Q$ if and only if
    $$N\in\{1-33,35-37,39-41,43,46-50,53,59,61,65,71,79,83,89,101,131\}.$$
\end{thm}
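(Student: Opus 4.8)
The plan is to stratify by the genus $g:=g(X_0(N))$ and to reduce the case $g\geq 2$ to a finite verification. First I would dispose of the small-genus levels. If $g=0$ then $X_0(N)(\Q)$ is infinite, and if $g=1$ then $X_0(N)$ is a genus-one curve carrying a rational cusp, hence an elliptic curve over $\Q$, whose $x$-coordinate gives a degree-$2$ map to $\PP^1$; pulling back rational values of $x$ already yields infinitely many quadratic points. Using the classical genus formula for $X_0(N)$ in terms of the index $[\SL_2(\Z):\Gamma_0(N)]$ and the numbers of elliptic points and cusps, I would list every $N$ with $g\leq 1$, all of which lie in the claimed set.

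For $g\geq 2$ I would invoke the dichotomy behind the theorem of Harris and Silverman quoted above, together with its sharpening for quadratic points: $X_0(N)$ has infinitely many quadratic points over $\Q$ if and only if either it is hyperelliptic over $\Q$, or it admits a degree-$2$ morphism over $\Q$ to an elliptic curve $E/\Q$ with $\operatorname{rank} E(\Q)>0$. The "if" implications are the easy directions, obtained by pulling back the infinitely many rational points of $\PP^1$, respectively of $E$; and \cite{HarrisSilverman91}, refined in the $d=2$ analysis of \cite{AbramovichHarris91}, supplies the converse, since a curve that is neither hyperelliptic nor bielliptic has only finitely many quadratic points, as does a bielliptic curve all of whose elliptic quotients have rank $0$.

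The finiteness input is a gonality bound. Both hyperelliptic and bielliptic curves have $\Q$-gonality at most $4$ — a bielliptic curve maps with degree $2$ to an elliptic curve, which in turn maps with degree $2$ to $\PP^1$ — so a lower bound on the gonality of $X_0(N)$ growing linearly in the index forces $N$ into an explicit finite list. It then remains to treat these finitely many levels one at a time. For the hyperelliptic ones I would import Ogg's classification \cite{Ogg74}. For the bielliptic ones I would compute the genera of the Atkin–Lehner quotients $X_0(N)/w_d$, since a degree-$2$ elliptic quotient most often arises as $X_0(N)\to X_0(N)/w_d$ with $g(X_0(N)/w_d)=1$; the finitely many levels where $\Aut(X_0(N))$ is not generated by Atkin–Lehner involutions must be inspected separately for non-modular bielliptic maps.

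The hard part will be this bielliptic analysis and, within it, the rank computations. Once the bielliptic levels are pinned down (this is the content of \cite{Bars99}), for each one I would identify the elliptic quotient $E=X_0(N)/w_d$ as an explicit curve over $\Q$ and determine $\operatorname{rank} E(\Q)$. The level belongs to the final set precisely when $X_0(N)$ is hyperelliptic or some such $E$ has positive rank; the bielliptic levels whose elliptic quotients all have rank $0$ are exactly the ones excluded from the list, and distinguishing these is where the genuine work lies.
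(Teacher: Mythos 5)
Your proposal is correct, but note that the paper contains no proof of this statement at all: it is imported wholesale from Bars \cite{Bars99}, and your outline is essentially a faithful reconstruction of how that cited proof goes --- the Harris--Silverman/Abramovich--Harris dichotomy reducing infinitude of quadratic points on a genus $\geq 2$ curve to hyperellipticity over $\Q$ or a degree-$2$ map to a positive-rank elliptic curve, a gonality/point-count bound to make the level list finite, Ogg's hyperelliptic classification, the bielliptic classification via Atkin--Lehner quotients with the finitely many exceptional-automorphism levels checked separately, and rank computations for the elliptic quotients. The only point worth making explicit is that for $X_0(N)$ the rational cusp $\infty$ guarantees the hyperelliptic and bielliptic quotients have a rational point (so the quotient conic is $\PP^1$ and the genus-one quotient is an elliptic curve over $\Q$), which your sketch uses implicitly.
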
 

\begin{thm}[Jeon \cite{Jeon2021}]\label{thmcubic}
    The modular curve $X_0(N)$ has infinitely many points of degree $3$ over $\Q$ if and only if
    $$N\in\{1-29,31,32,34,36,37,43,45,49,50,54,64,81\}.$$
\end{thm}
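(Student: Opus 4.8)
The plan is to turn ``infinitely many cubic points'' into a geometric condition, bound the level $N$ to an explicit finite list, and then decide each remaining curve individually. It is convenient to split off the case $g(X_0(N))\le 1$. When $g=0$ the curve is $\PP^1_\Q$ (the cusp $\infty$ is rational) and has irreducible closed points of every degree; when $g=1$ it is an elliptic curve with the rational cusp as origin. In the latter case I claim there are always infinitely many cubic points \emph{regardless of rank}: the complete linear system $|3\cdot O|$ of the rational degree $3$ divisor $3\cdot O$ has dimension $2$, so the single fibre $|3\cdot O|\cong\PP^2$ of $\Sym^3 X_0(N)\to\Pic^3 X_0(N)$ already contributes infinitely many rational points, and a dimension count shows infinitely many of these effective divisors are irreducible, i.e. genuine degree $3$ closed points. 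Thus every $N$ with $g(X_0(N))\le1$ lies in the list, accounting for the bulk of the asserted values; the same $\Sym^3$ argument, using the rational cusp, also makes every genus-$2$ curve $\Q$-trigonal and hence qualifying.

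For $g\ge 2$ I would invoke the standard membership criterion (Faltings' theorem on subvarieties of abelian varieties together with the $d=3$ case of Abramovich--Harris): a nice curve $C/\Q$ of genus $\ge2$ has infinitely many cubic points over $\Q$ if and only if $C$ admits a degree $3$ morphism to $\PP^1$ over $\Q$, or a degree $3$ morphism to an elliptic curve $E/\Q$ of positive Mordell--Weil rank. One direction is elementary: pulling back the infinitely many rational points of $\PP^1$ or of $E$ gives infinitely many effective degree $3$ divisors, and Hilbert irreducibility (the geometric monodromy of a degree $3$ cover is a transitive subgroup of $S_3$, hence contains a $3$-cycle) forces infinitely many fibres to be irreducible. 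The converse is the real content: if $\Sym^3 X_0(N)(\Q)$ is infinite, its image in $\Jac(X_0(N))$ contains a positive-dimensional translate of an abelian subvariety by Faltings, and analysing whether that subvariety is an elliptic curve (giving the trielliptic map) or forces the fibres of $\Sym^3 C\to\Jac C$ to move (giving the $g^1_3$) yields exactly these two alternatives. Crucially the argument is carried out over $\Q$, so it cannot be fooled by a trigonal or trielliptic structure defined only over $\overline\Q$.

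Next I would bound $N$. Both alternatives force the $\overline\Q$-gonality of $X_0(N)$ to be at most $6$: a $g^1_3$ gives $\gon_{\overline\Q}\le 3$, while composing $X_0(N)\to E\to\PP^1$ gives $\gon_{\overline\Q}\le 3\cdot 2=6$. Feeding this into Abramovich's linear lower bound $\gon_{\overline\Q}(X_0(N))\ge \tfrac{7}{800}\,[\,\PSL_2(\Z):\overline{\Gamma_0(N)}\,]$ bounds the index, and hence $N$, to an explicit finite range. This reduces the problem to a finite computation: enumerate all $N$ with $\gon_{\overline\Q}(X_0(N))\le 6$ (by now completely tabulated via the trigonal classification and the gonality computations of Derickx--van Hoeij), and for each such $N$ with $g\ge2$ decide (a) whether $X_0(N)$ carries a $g^1_3$ defined over $\Q$, and (b) whether it admits a degree $3$ map to a positive-rank elliptic curve over $\Q$. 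For (a) one exhibits the pencil on an explicit planar or canonical model; for (b) one searches the Atkin--Lehner quotients $X_0(N)\to X_0(N)/W$ and the elliptic isogeny factors of $J_0(N)$ for a genuine degree $3$ map and reads off the rank.

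The hard part will be the \emph{negative} half of (a)/(b): certifying, for each low-gonality level that is not in the final list, that it carries only finitely many cubic points. Two failure modes must be excluded. First, $X_0(N)$ may be trigonal over $\overline\Q$ but not over $\Q$ (the $g^1_3$ is not Galois-stable, or no rational point meets a generic fibre); separating $\Q$-gonality from $\overline\Q$-gonality is delicate and must be settled model-by-model. Second, a degree $3$ map to an elliptic curve may exist while every such elliptic quotient has rank $0$, so only finitely many fibres contain a rational point and no infinitude results; here one needs a provably correct rank computation together with a check that no other elliptic quotient of positive rank receives a degree $3$ map. Assembling these individual certifications for the finitely many borderline levels, and matching the surviving levels against $\{1\text{--}29,31,32,34,36,37,43,45,49,50,54,64,81\}$, completes the proof.
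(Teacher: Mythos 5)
The paper does not prove this statement at all: it is imported verbatim from Jeon \cite{Jeon2021}, so your blueprint can only be measured against the method of that cited work and against the degree-$5$ toolkit this paper builds. Your architecture (criterion $\to$ gonality bound $\to$ finite case check) is indeed the standard one, but your key step contains a genuine gap: the asserted equivalence ``infinitely many cubic points $\iff$ a degree $3$ map over $\Q$ to $\PP^1$ or to a positive-rank elliptic curve'' is false for general curves of genus $\geq 2$, and your sketch of the converse (``yields exactly these two alternatives'') is precisely where it breaks. The correct criterion is \Cref{translate_of_abelian_variety_thm}: either a degree $3$ function exists, or $W_3^0$ contains a translate of a positive-rank abelian variety \emph{through the orbit of a cubic point}; by Debarre--Fahlaoui that variety is an elliptic curve, but this does not force a degree $3$ map. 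The paper's own quotation of \cite[Theorem 1.2]{KV2025} (\Cref{kadetsvogt1.2}(2)) records two further cases for $\min(\delta(C/K))=3$: smooth plane quartics without rational points, and genus-$4$ Debarre--Fahlaoui curves. For $X_0(N)$ the quartic case dies because cusps give rational points, but the Debarre--Fahlaoui alternative must be excluded explicitly --- exactly what this paper does at degree $5$ in \Cref{DFcurves} via \Cref{prop:BFtoEC} --- and your proposal never mentions it. Worse, you apply the dichotomy to all $g\geq 2$, including the hyperelliptic and bielliptic levels of Bars' list (\Cref{thmquadratic}), which already have infinitely many quadratic points; for those curves $\min(\delta)\leq 2$, so classification theorems for points of \emph{minimal} degree (Abramovich--Harris, Kadets--Vogt) say nothing about their cubic points, and the translate alternative has to be killed level by level (rank-zero Jacobian factors, quadratic-form/modular-degree arguments for maps to the positive-rank factors at $N=53,61,65,79,\dots$). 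This is the hard half of the theorem and it is missing from your argument.

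A second concrete error is your low-genus handling. For $g=2$ the divisor $3\infty$ yields a degree $3$ function if and only if $\infty$ is \emph{not} a Weierstrass point: $d=3$ fails the hypothesis $d>2g(C)-1$ of \Cref{lem:deg_d_function}, and indeed when $\ell(2\infty)=\ell(3\infty)=2$ the pencil $|3\infty|$ has $\infty$ as a base point and its moving part is the hyperelliptic $g^1_2$. There exist genus-$2$ curves over $\Q$ with rational points and no $\Q$-rational base-point-free $g^1_3$ (every rational degree-$1$ class in $\Pic^1$ supported on the curve), so ``every genus-$2$ curve'' is false as stated; for the eight genus-$2$ levels one must verify the cusp is non-Weierstrass, which is exactly the kind of check this paper performs at degree $5$ in \Cref{genus34prop}. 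Minor but worth fixing: the gonality tabulation for $X_0(N)$ is due to Ogg, Hasegawa--Shimura and Najman--Orli\'c \cite{NajmanOrlic22}, not Derickx--van Hoeij \cite{derickxVH}, whose computations concern $X_1(N)$. Your genus-$1$ case, the Hilbert-irreducibility direction, and the Abramovich bound with constant $7/800$ (i.e.\ $\lambda_1\geq 21/100$) are all fine.
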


\begin{thm}[Hwang, Jeon; Derickx, Orlić \cite{Hwang2023, DerickxOrlic23}]\label{quarticthm}
    The modular curve $X_0(N)$ has infinitely many points of degree $4$ over $\Q$ if and only if
    \begin{align*}
        N\in\{&1-75,77-83,85-89,91,92,94-96,98-101,103,104,107,111,\\
        &118,119,121,123,125,128,131,141-143,145,155,159,167,191\}.
    \end{align*}
\end{thm}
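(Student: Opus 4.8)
The plan is to prove both implications, handling the ``only if'' direction by a Kadets--Vogt reduction to a finite list of levels and the ``if'' direction by exhibiting an explicit source of quartic points for each listed $N$. Applying \Cref{KVthm} with $d=4$ gives $m=1$ and $\epsilon=5$, so the genus bound in case (2) is $\max\!\big(\tfrac{4\cdot 3}{2}+1,\;5\big)=7$. Hence, exactly as in \Cref{new_cond_quintic}, whenever $\min(\delta(X_0(N)/\Q))=4$ and $g(X_0(N))\geq 8$ there is a nonconstant $\Q$-morphism $\phi\colon X_0(N)\to Y$ of degree $\geq 2$ with $4=\min(\delta(Y/\Q))\cdot\deg\phi$. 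Thus either $\deg\phi=4$ and $Y$ has infinitely many rational points (so $X_0(N)$ maps with degree $4$ to $\PP^1$ or to a positive-rank elliptic curve; call these the \emph{tetragonal} and \emph{tetraelliptic} cases), or $\deg\phi=2$ and $Y$ has density degree $2$. In the latter case the characterization of curves with infinitely many quadratic points shows $Y$ admits a degree-$2$ map to $\PP^1$ or to a positive-rank elliptic curve, so composing with $\phi$ returns us to the tetragonal or tetraelliptic case. Therefore, for $g(X_0(N))\geq 8$, it suffices to classify the tetragonal and tetraelliptic $X_0(N)$.

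For the ``only if'' direction it then remains, beyond the finitely many levels with $g(X_0(N))\leq 7$, to bound $N$ in each case. For tetragonal curves I would invoke a lower bound for the $\Q$-gonality of $X_0(N)$ that grows linearly in the index $[\SL_2(\Z):\Gamma_0(N)]$ (Abramovich's spectral bound, sharpened by the explicit gonality estimates known for $X_0(N)$); since this eventually exceeds $4$, only finitely many $N$ admit a degree-$4$ map to $\PP^1$. For tetraelliptic curves I would argue analogously, using that the elliptic target $E$ occurs as a positive-rank $\Q$-isogeny factor of $\Jac X_0(N)$ and that degree-$4$ maps to elliptic curves likewise force the index, hence $N$, to be bounded. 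The degree-$2$ maps intervening above are, outside finitely many exceptional levels, quotients by Atkin--Lehner involutions, so the genera of the quotients $X_0(N)/w$ grow with $N$ and supply the remaining bounds. This cuts the problem down to a finite, explicit set of candidate levels, which together with all $N$ with $g(X_0(N))\leq 7$ I would examine individually.

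For the ``if'' direction, and for the finite explicit determination above, I would produce for each candidate $N$ a concrete structure: a rational pencil $g^1_4$ (whose fibers over rational points give infinitely many quartic points), or a degree-$4$ map to an elliptic curve of positive rank over $\Q$, or a degree-$2$ map to a curve with infinitely many quadratic points (whose quadratic points pull back to quartic points). The hyperelliptic levels and those of genus $\leq 1$ are immediate, since a $g^1_2$ pulls degree-$2$ points of $\PP^1$ back to quartic points; the quadratic and cubic classifications (\Cref{thmquadratic,thmcubic}) furnish many of the degree-$2$ constructions via degeneracy and quotient maps. For the levels that barely fail to lie in the list I would instead prove finiteness by verifying that every degree-$4$ effective divisor class defined over $\Q$ lies in a zero-dimensional linear system and does not move in a positive-rank abelian subvariety of $\Jac X_0(N)$.

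The main obstacle I expect is concentrated at the boundary of the list and is twofold. First, deciding the exact $\Q$-gonality for the finitely many borderline levels — distinguishing gonality $4$ from gonality $5$ — generally requires explicit plane or canonical models and ad hoc arguments, since the general lower bounds are not sharp there. Second, for the tetraelliptic and degree-$2$ cases one must determine precisely which Atkin--Lehner quotients and which isogeny factors of $\Jac X_0(N)$ have positive Mordell--Weil rank over $\Q$; this rests on rank computations (analytic rank together with descent) and careful bookkeeping of which newform factors are defined over $\Q$. Finally, because \Cref{KVthm} controls only $g(X_0(N))\geq 8$, the low-genus levels must be treated one by one, and ruling out sporadic isolated quartic points there — where no uniform geometric source is available — is the delicate step that prevents a single uniform argument.
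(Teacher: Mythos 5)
The paper does not actually prove this statement: \Cref{quarticthm} is imported verbatim from Hwang--Jeon and Derickx--Orli\'c \cite{Hwang2023, DerickxOrlic23}, so your proposal can only be measured against the cited works and against this paper's own degree-$5$ analogue of the same strategy. Measured that way, your sketch faithfully reconstructs the Derickx--Orli\'c route: your $d=4$ instance of \Cref{KVthm} (with $m=1$, $\epsilon=5$, genus bound $\max(7,5)=7$) is exactly the analogue of \Cref{new_cond_quintic}; your reduction of the $\deg\phi=2$ case to the tetragonal/tetraelliptic cases via \Cref{kadetsvogt1.2}(1) and composition is sound; and bounding $N$ by a spectral gonality bound plus point counts over $\F_{p^2}$ is precisely the mechanism of \Cref{abramovichbound} and \Cref{cor:ogg} (note that Abramovich's bound controls only maps to $\PP^1$; for elliptic targets it is the Ogg-type count, with $\#E(\F_{p^2})\leq(p+1)^2$, that does the work). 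Two points deserve more care than your sketch gives them. First, \Cref{KVthm} requires $\min(\delta(X_0(N)/\Q))=4$, and this holds for every $N$ outside the displayed list only because the quadratic and cubic lists of \Cref{thmquadratic,thmcubic} are contained in the quartic list; your argument silently uses this containment and should state it. Second, in the ``if'' direction, a degree-$4$ map to a positive-rank elliptic curve, or a degree-$2$ map to a curve with infinitely many quadratic points, a priori produces points of degree \emph{dividing} $4$; exactness of the degree is automatic for a $g^1_4$ via Hilbert irreducibility (as this paper does for degree $5$ in \Cref{infinitelymanyquinticthm}), but for elliptic or intermediate targets it needs the finer analysis (of the type in \Cref{translate_of_abelian_variety_thm}) carried out in the cited papers. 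With those caveats, your outline matches how the result is actually proved, including the concentration of difficulty in the borderline gonality determinations, the rank bookkeeping for isogeny factors of $J_0(N)$, and the case-by-case treatment of the low-genus levels.
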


From these results, it is easy to determine the modular curves of the form $X_0(N)$ with minimum density degree $1$, $2$, $3$ and $4$. For example, $X_0(N)$ has minimum density degree $4$ if and only if $N$ occurs in the set of \Cref{quarticthm} but not in the sets of Theorems \ref{thmquadratic} and \ref{thmcubic}.
Motivated by the recent work of Kadets and Vogt \cite{KV2025} on density degrees, we determine all curves $X_0(N)$ with minimum density degree equal to $5$. Our main result is the following theorem:
\begin{thm}\label{densitydeg5thm}
    The modular curve $X_0(N)$ has minimum density degree $\min(\delta(X_0(N)/\Q))$ equal to $5$ if and only if $N=109$.
\end{thm}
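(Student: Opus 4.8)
The plan is to combine the classification theorems for density degrees $1$ through $4$ (Theorems~\ref{thmquadratic}, \ref{thmcubic}, and \ref{quarticthm}) with the structural constraint provided by Corollary~\ref{new_cond_quintic}. A level $N$ has density degree exactly $5$ precisely when $X_0(N)$ has infinitely many points of degree $5$ but only finitely many points of degree $d \leq 4$. The second condition immediately rules out every $N$ appearing in Theorem~\ref{quarticthm} (and hence also those in Theorems~\ref{thmquadratic} and~\ref{thmcubic}), since those curves already have infinitely many quartic points. So the real work is to identify, among the \emph{remaining} levels, exactly those for which $X_0(N)$ does have infinitely many quintic points, and to show the answer is $N = 109$ alone.

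First I would split the candidate levels into two regimes according to genus. For levels with $g(X_0(N)) \geq 12$, Corollary~\ref{new_cond_quintic} applies directly: if such a curve had density degree $5$, it would have to admit a $\Q$-rational degree-$5$ map to $\PP^1$ or to a positive-rank elliptic curve over $\Q$. I would therefore enumerate the finitely many levels of small genus not already excluded, and for the high-genus levels establish that no such degree-$5$ map exists. The gonality bound is the natural tool here: if the $\Q$-gonality (or $\Qbar$-gonality) of $X_0(N)$ exceeds $5$, then no degree-$5$ map to $\PP^1$ can exist, and one must additionally rule out degree-$5$ maps to positive-rank elliptic curves (``pentaelliptic'' structures), e.g. via known lower bounds on gonality of $X_0(N)$ growing linearly in the index $[\SL_2(\Z):\Gamma_0(N)]$. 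This reduces the high-genus regime to a finite, explicitly boundable list.

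For the finitely many surviving low-genus candidates, I would proceed case by case. The positive direction—showing $N = 109$ genuinely has density degree $5$—requires exhibiting an infinite family of quintic points while confirming finiteness of points of degree $\leq 4$; the latter follows from $109$ being absent from the degree-$\leq 4$ classification, and the former from producing an explicit degree-$5$ map (to $\PP^1$ or to a rank-positive elliptic quotient of $\Jac(X_0(109))$) pulling back infinitely many rational points. For each other low-genus candidate $N$, I would show either that it already has infinitely many points of some degree $d \leq 4$ (so its density degree is smaller), or that it fails to admit any source of infinitely many quintic points, so that $\min(\delta(X_0(N)/\Q)) > 5$. This demands careful analysis of the possible degree-$5$ maps and of whether the relevant elliptic or $\PP^1$ images contribute infinitely many degree-$5$ points that do not come from lower-degree points via the $d_1 + d_2 = 5$ decompositions allowed by AWS-type (symmetric product) constructions.

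The main obstacle I expect is the low-genus case analysis, specifically ruling out spurious sources of infinitely many quintic points. Unlike the clean gonality argument in the high-genus regime, a curve can acquire infinitely many degree-$5$ points not only from a genuine degree-$5$ map but also from, say, a degree-$2$ or degree-$3$ map composed with rational points on the base, or from secant constructions on the symmetric power $\Sym^5 X_0(N)$; disentangling these and proving that none yields a genuinely new density degree of exactly $5$ (rather than a smaller density degree) for any $N \neq 109$ is the delicate part. Correctly accounting for the Abel--Jacobi image and the structure of $W_5 \subset \Jac(X_0(N))$, together with the finiteness of lower-degree points, is where the bulk of the careful argument must go.
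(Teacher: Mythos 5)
Your skeleton is the same as the paper's: discard every $N$ in the degree~$\leq 4$ classifications, invoke \Cref{new_cond_quintic} in the high-genus regime, reduce to a finite list via gonality-type bounds, and finish with a case analysis in which $N=109$ is the unique positive case (its degree-$5$ map to $\PP^1$ comes from $\gon_\Q X_0(109)=5$). But as a proof the proposal has a genuine gap in the two places you yourself flag as ``where the bulk of the argument must go,'' and the missing ideas are not routine. First, ruling out degree-$5$ maps to positive-rank elliptic curves: linear-in-index gonality bounds only control maps to $\PP^1$. For elliptic targets the paper uses Ogg's $\F_{p^2}$ point count (\Cref{cor:ogg}) to reduce to $N<468$, and then the decisive tool for the remaining levels is the positive definite quadratic form on $\textup{Hom}(J_0(N),E)$ (\Cref{quadraticformprop,quadraticformcoefficients,coefficient_divisible_by_degf}): every value of this form is divisible by the modular degree $\deg f$, so a degree-$5$ map forces $\deg f\in\{1,5\}$, and both options lead only to rank-$0$ elliptic curves (\Cref{modulardeg5prop}). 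Nothing in your outline produces this divisibility argument, and without it the mid-range levels (pentaelliptic candidates with $\textup{cond}(E)$ a proper divisor of $N$) cannot be eliminated.

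Second, for the $17$ surviving levels you correctly identify that the danger is case (2) of \Cref{translate_of_abelian_variety_thm} --- translates of positive-rank abelian varieties inside $W_5^0$ --- but you offer no mechanism to exclude them, and in fact four distinct mechanisms are needed: (i) for $N\in\{76,84,90,108\}$ the Jacobian $J_0(N)$ has rank $0$, so only gonality matters; (ii) for $N\in\{97,113,127,137,139,169\}$ the Debarre--Fahlaoui dimension bound (\Cref{DF_AV_dim}) forces any such abelian variety to have dimension $\leq 2$, and all simple factors of $J_0(N)$ of dimension $\leq 2$ have rank $0$; (iii) for $N\in\{93,115,133,147\}$ the positive-rank factors are $2$-dimensional, so one needs the Kadets--Vogt classification (\Cref{kadetsvogt_prop_DF}) to force $X_0(N)$ to be the normalization of a Debarre--Fahlaoui curve, which is absurd because such a curve maps nonconstantly to a positive-rank elliptic curve (\Cref{prop:BFtoEC}) while no positive-rank elliptic curve has conductor dividing $N$; and (iv) for $N\in\{112,117\}$ none of the above applies, and the paper resorts to an explicit computation: a specialization lemma (\Cref{lem:reduction_AV_translate}) reduces the question mod $p$, and an enumeration of $\Sym^5 C(\F_3)$ resp.\ $\Sym^5 C(\F_5)$, using explicit models and modular parametrizations, shows no translate of the relevant positive-rank elliptic curve lies in $W_5^0$. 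Your proposal would stall precisely at (iii) and (iv): ``careful analysis of the Abel--Jacobi image and the structure of $W_5$'' restates the problem rather than solving it, and the two hardest levels genuinely require the reduction-mod-$p$ idea plus computation, which no amount of gonality or rank bookkeeping replaces.
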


As a guiding geometric intuition, the exceptional nature of the level $N=109$ can be attributed to the fact that
$X_0(109)$ is the unique modular curve $X_0(N)$ with $\mathbb{Q}$-gonality~$5$,
that is, it admits a non-constant $\mathbb{Q}$-rational morphism of minimal degree~$5$
to $\mathbb{P}^1$. 
Moreover, this exceptional nature does not arise from degree $5$ $\mathbb{Q}$-rational maps to elliptic curves of positive rank, since no modular curve $X_0(N)$ admits such a map; see \Cref{thm:pos_rank_penta}.

The curve $X_0(109)$ being the unique curve of density degree $5$ seems to fit into a broader pattern.
Indeed, density degrees appear to be much rarer in odd degree:
$X_0(54), X_0(64)$, and $X_0(81)$ are the only curves $X_0(N)$
whose minimum density degree is equal to $3$ by \Cref{thmcubic}.
By contrast, there are many more curves $X_0(N)$ of density degrees $2$ and $4$,
see \Cref{thmquadratic} and \Cref{quarticthm}.

A partial geometric explanation of this phenomenon is as follows. It is much easier to obtain morphisms from $X_0(N)$ of even degree because all Atkin-Lehner involutions $w_d$ are of degree $2$ and the degeneracy maps $X_0(N)\to X_0(M)$ are often of even degree. We often found that maps of minimal degree to $\PP^1$ (\cite{NajmanOrlic22}) and elliptic curves (\cite{Hwang2023, DerickxOrlic23}) factor through Atkin-Lehner quotients, making them of even degree.

Theorem~\ref{densitydeg5thm} is an important step in determining all curves $X_0(N)$ with infinitely many quintic points. In fact, if a curve has infinitely many quintic points, then it has to have minimum density degree $\leq 5$. And our result completes the classification of all modular curves of minimum density degree $\leq 5$. 
It is important to note that this does not amount to a complete
classification of all curves with infinitely many quintic points:
while every curve of minimum density degree $5$ necessarily has infinitely
many quintic points, the latter phenomenon may also occur for curves
of a smaller minimum density degree.
Therefore, in order to determine all curves $X_0(N)$ with infinitely many quintic points, it suffices to determine for the explicit list $X_0(N)$ of minimum density degree $\leq 4$ whether they have infinitely many quintic points. With some extra work, we are able to prove the following:

\begin{thm}\label{quinticcor}
    The modular curve $X_0(N)$ has infinitely many points of degree $5$ for \[N\in\{1-45,47-58,61,63,64,65,67,68,72,73,75,80,81,91,109,121,125\}.\] The modular curve $X_0(N)$ has only finitely many points of degree $5$ for $N\geq192$ and
    \begin{align*}
        N\in\{&46,59,60,62,66,69,70,71,76,78,84,87,90,93,94,95,97,102,104,105,\\
        &106,108,110,112-117,119,120,122,124,126,127,129,130,132-140,\\
        &144,146-154,156,157,158,160-166,168-190\}.
    \end{align*}
\end{thm}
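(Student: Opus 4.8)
The plan is to prove \Cref{quinticcor} by combining the density-degree classification from \Cref{densitydeg5thm} with a careful case-by-case analysis of the remaining levels. The logical structure is dictated by the following observation: a curve has infinitely many degree-$5$ points precisely when $5 \in \delta(X_0(N)/\Q)$, and by definition $\min(\delta(X_0(N)/\Q)) \leq 5$ is a necessary condition. So the very first step is to split the levels $N$ according to their density degree. By \Cref{densitydeg5thm}, the only level with density degree exactly $5$ is $N=109$, and one checks directly (the density-degree-$5$ analysis produces a degree-$5$ map to $\PP^1$ or a positive-rank elliptic curve, which pulls back the infinitely many rational points) that $X_0(109)$ indeed has infinitely many quintic points. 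All the other levels in the ``infinitely many'' list have density degree $\leq 4$, which I would read off from Theorems \ref{thmquadratic}, \ref{thmcubic}, and \ref{quarticthm}.

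For the affirmative part, I would establish infinitely many quintic points on the remaining listed $N$ by exhibiting, for each such level, a source of infinitely many points of degree exactly $5$. The cleanest mechanism is a nonconstant $\Q$-rational map $\phi\colon X_0(N)\to Y$ of degree dividing $5$ where $Y$ already has infinitely many points of the appropriate lower degree: if $\deg\phi = 5$ onto $\PP^1$ or a positive-rank elliptic curve, pullback of rational points gives infinitely many points of degree $\leq 5$, and one must verify the generic degree is exactly $5$ (not smaller) by ruling out that the points descend to a proper subfield. For levels of density degree $d < 5$ with infinitely many degree-$d$ points, the standard trick is to combine such a point with a rational point (e.g.\ a cusp or a rational CM point) to build a degree-$5 = d + (5-d)$ point, or to use a degree-$(5/d)$ map when $d \mid 5$; since $5$ is prime, the available combinations are $5 = 1+4 = 2+3$ etc., realized via sums of effective divisors. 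I would organize these levels by their known structure (hyperelliptic, bielliptic, trigonal, and so forth) so that the existence of a low-degree map is immediate from the cited classifications.

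For the negative part---the claim that $X_0(N)$ has only finitely many quintic points for $N\geq 192$ and for the explicitly listed levels---the engine is \Cref{new_cond_quintic} together with its genus hypothesis. For $N \geq 192$ one has $g(X_0(N)) \geq 12$, so if $X_0(N)$ had density degree $\leq 5$ it would admit a degree-$5$ $\Q$-rational map to $\PP^1$ or a positive-rank elliptic curve; I would rule this out by bounding the $\Q$-gonality of $X_0(N)$ from below (the gonality grows linearly in $N$, so for $N$ large no degree-$5$ map to $\PP^1$ exists) and by controlling maps to elliptic curves via the structure of $\Jac(X_0(N))$ and its optimal elliptic quotients. For the finite list of levels below $192$, the genus bound $g\geq 12$ must be checked individually, and where it holds the same gonality/quotient argument applies; where $g < 12$ one falls outside the scope of \Cref{new_cond_quintic} and must instead argue directly, typically by enumerating all degree-$\leq 5$ maps and showing none produces infinitely many degree-$5$ points, or by an AV-rank or Jacobian-decomposition argument.

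The main obstacle will be the negative part for the intermediate levels---those of moderate genus that are large enough to have no obvious low-degree map but small enough that \Cref{new_cond_quintic}'s genus hypothesis may fail or that the gonality bound is not yet decisive. For these, the degree-$5$ maps to $\PP^1$ or elliptic curves cannot be excluded by a clean asymptotic inequality and must be handled by explicit computation: determining the gonality, classifying the elliptic-curve quotients of $\Jac(X_0(N))$, and checking that none of the finitely many candidate maps yields a positive-dimensional family of quintic points (for instance via the Abel--Jacobi image and Faltings-type finiteness on symmetric powers). Ensuring this case analysis is exhaustive, and that the borderline genus-$\leq 11$ levels are each dispatched by a correct direct argument, is where the real work lies.
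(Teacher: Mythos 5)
Your overall skeleton --- split the levels by density degree, dispatch everything with finitely many points of degree $\leq 4$ via \Cref{densitydeg5thm}, then treat the remaining levels individually --- matches the paper, but there are two genuine gaps. The first is in your affirmative part: the ``standard trick'' of building a degree-$5$ point as a sum $5 = 1+4$ or $5 = 2+3$ of lower-degree points does not work. Such a sum is a reducible effective divisor of degree $5$, i.e.\ a rational point of $\Sym^5 X_0(N)$, but it is \emph{not} a closed point of degree $5$: a degree-$5$ point must have a quintic residue field, and the paper explicitly warns (in its discussion of the open cases) that points of $X_0(N)(\Q) \times \Sym^4 X_0(N)(\Q)$ inside $\Sym^5 X_0(N)(\Q)$ are precisely the parasitic points one must exclude. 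The paper's actual mechanism for every level in the first list is to produce a function of degree exactly $5$ in $\Q(X_0(N))$ and then invoke Hilbert's Irreducibility Theorem to get infinitely many points of degree exactly $5$ (\Cref{infinitelymanyquinticthm}). Constructing these functions is the substantive work your proposal skips: Riemann--Roch on $5\infty$ for $g \leq 2$, verification that $\infty$ is not a Weierstrass point for $g = 3,4$, the degree-$5$ degeneracy map $X_0(125) \to X_0(25) \cong \PP^1$, and for the remaining levels an explicit Riemann--Roch search (\Cref{deg5mapprop}) over degree-$5$ rational divisors supported on rational cusps and quadratic points.

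The second gap is in your negative part, for the fourteen levels $N \in \{46,59,60,62,66,69,70,71,78,87,94,95,104,119\}$ in the ``finitely many'' list that have \emph{infinitely many} quartic points. For these, $\min(\delta(X_0(N)/\Q)) \leq 4$, so \Cref{new_cond_quintic} (and all of the Kadets--Vogt machinery) is inapplicable regardless of the genus --- the hypothesis that fails is $\min(\delta) = 5$, not merely $g \geq 12$, and your proposal misattributes the obstruction to the genus condition. Moreover, your fallback of ``bounding the $\Q$-gonality from below'' cannot work here: these curves have gonality $2$ or $4$, so a degree-$5$ function is not excluded by any gonality bound. The paper's argument instead combines two ingredients your proposal lacks: \Cref{translate_of_abelian_variety_thm} (since $J_0(N)$ has rank $0$ over $\Q$ for these levels, infinitude of quintic points could only come from a degree-$5$ map to $\PP^1$), and the Castelnuovo--Severi inequality applied against the degree-$2$ Atkin--Lehner quotient maps (\Cref{csprop}), which rules out such a degree-$5$ map because $\gcd(2,5)=1$ forces the genus bound $g(X_0(N)) \leq 2\,g(Y) + 4$ to fail. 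Without these two ideas the fourteen hardest levels of the negative part remain unproved; for $N \geq 192$ and the levels with finitely many degree-$\leq 4$ points your route is essentially a re-derivation of \Cref{densitydeg5thm} and is fine.
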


\begin{remark}\label{rem:remain-cases} With the above theorem, there are 30 levels $N$ for which we do not know yet if there are infinitely many quintic points. These levels are
\begin{align*}
\{&74,77,79,82,83,85,86,88,89,92,96,98,99,100,101,103,107,\\
        &111,118,123,128,131,141,142,143,145,155,159,167,191\}.
\end{align*}
The remaining 30 levels are technically difficult for the following reasons.
For each of these levels $N$, the modular curve $X_0(N)$ already has infinitely many points of degree at most $4$.
Equivalently, there exists a $\mathbb{Q}$-rational map of degree $\le 4$ from $X_0(N)$ to either $\mathbb{P}^1$
or to a positive rank elliptic curve.
On the other hand, none of these curves admits a $\mathbb{Q}$-rational map of degree $5$
to $\mathbb{P}^1$ or to a positive rank elliptic curve; see \Cref{thm:pos_rank_penta}.

By Theorem~\ref{translate_of_abelian_variety_thm}, if $X_0(N)$ has infinitely many quintic points, then
$W_5^0(X_0(N))$ must contain a translate of a positive rank abelian subvariety
of $\Pic^0(X_0(N))$ coming from a quintic point.
However, for all such $N$, $\Pic^0(X_0(N))$ already contains positive rank
abelian subvarieties, and the inclusions $W_d^0(X_0(N)) \subset W_5^0(X_0(N))$
hold for $d<5$.
Thus it is difficult to distinguish whether a given translate in $W_5^0(X_0(N))$
arises from genuinely quintic points or from points of lower degree.
This will be discussed further in \Cref{deg5pointssection}.

\end{remark}

The outline of the paper is as follows: 
\begin{itemize}
    \item In \Cref{section_preliminaries} we present the previously known results that will be used in later sections of the paper. 
    \item In \Cref{positive_rank_pentaelliptic} we prove that there are no curves $X_0(N)$ that admit a degree $5$ rational morphism to a positive rank elliptic curve using the techniques from \cite{DerickxOrlic23}. This is an essential result in proving \Cref{densitydeg5thm}. Combined with the $\Q$-gonalities computed in \cite{NajmanOrlic22} and \Cref{new_cond_quintic}, this limits the number of levels where $\min(\delta(X_0(N)/\Q))=5$ to a set of only 17 values.
    \item In \Cref{AVtranslates_section} we use four different techniques     (\Cref{jacobian_rank0_prop}, \Cref{no_AV_of_dim_12}, \Cref{DFcurves}, and
    \Cref{prop112117}) to show that $X_0(N)$ has only finitely many degree $5$ points. These $4$ techniques allow us to prove \Cref{densitydeg5thm} in \Cref{main_results_section}.

    After that, we move on to prove \Cref{quinticcor}.
    \item In \Cref{cs_section} we use the Castelnuovo-Severi inequality to prove that for some levels $N$ the curve $X_0(N)$ does not admit a degree $5$ morphism to $\PP^1$ or an elliptic curve. This is needed for the proof of \Cref{quinticcor} in \Cref{deg5pointssection}.
    \item In \Cref{infinitelydeg5section} we prove that for the levels in the first list of \Cref{quinticcor}, there is a degree $5$ rational morphism to $\PP^1$.
    \item In \Cref{deg5pointssection} we prove \Cref{quinticcor} and give a short discussion concerning the remaining $30$ levels $N$.
\end{itemize} 

\section*{Reproducibility and data availability}
The results in this paper depend on computations carried out in Magma (V2.29-4) \cite{magma}, SageMath (10.5) \cite{sagemath}, Maple (2024) \cite{maple}, and PARI/GP (2.15.5) \cite{PARIGP}. 

We also used the packages QuadraticPoints and MD Sage (v0.1.0). MD Sage is a SageMath package and can be found at:
\begin{center}
    \url{https://github.com/koffie/mdsage/tree/v0.1.0}
\end{center}
QuadraticPoints is a Magma package which can be found at:
\begin{center}
    \url{https://github.com/TimoKellerMath/QuadraticPoints/}
\end{center}
This package contains the Magma code accompanying the article \cite{akmnov}.
QuadraticPoints does not have versioned releases; we used the version with commit hash: 
\begin{center}
\href{https://github.com/TimoKellerMath/QuadraticPoints/tree/267352126eb18a4737eac4484191bd13a864fe8b}{267352126eb18a4737eac4484191bd13a864fe8b}.
\end{center}

The code for verifying all computations in this paper can be found at:
\begin{center}
\url{https://github.com/nt-lib/density-degree-5-X0}
\end{center}
Filenames in this article refer to files in the above GitHub repository.

The Magma and PARI/GP computations have been run on a server at the University of Zagreb with an Intel Xeon W-2133 CPU @ 3.60GHz with 6 cores and 64GB of RAM running Ubuntu 18.04.6 LTS. With the Magma computations taking roughly 6 minutes and 485 MB of ram, while the PARI/GP computations completed within a few seconds. 

The Sagemath computation were run on MacBook Pro with an Apple M3 chip with 8 cores (4 performance and 4 efficiency) and 8 GB of memory. We assume that the computations were executed on the performance cores, which run at 4.05GHz but were not able to verify this. These computation took roughly 12 minutes and 900 MB of ram.

The Maple computations were performed at Kongju National University on a workstation with an Intel Core i9-10920X CPU @ 3.50GHz and 256GB of RAM, taking approximately a few minutes.

More details on how to reproduce the computation can be found in the \texttt{README.md} file of the aforementioned \texttt{density-degree-5-X0} repository.

\section*{Acknowledgments}

We thank Filip Najman for his useful comments on the paper. We also thank the referee for a careful reading of our manuscript and for the valuable and constructive comments.

The first and fourth authors were supported by the Croatian Science Foundation under the project no. IP-2022-10-5008 and by the project ``Implementation of cutting-edge research and its application as part of the Scientific Center of Excellence for Quantum and Complex Systems, and Representations of Lie Algebras“, PK.1.1.10.0004, European Union, European Regional Development Fund.

The second author was supported by Global-Learning $\&$ Academic research institution for Master’s·PhD students, and Postdocs (LAMP) Program of the National Research Foundation of Korea (NRF) grant funded by the Ministry of Education(No. RS-2024-00443714). 

The third author was supported by Basic Science Research Program through the National Research Foundation of Korea (NRF) funded by the Ministry of Education (No. 2022R1A2C1010487).

\section{Preliminaries}\label{section_preliminaries}

For a curve $C$, a number field $K$ and a positive integer $d$, let
\begin{align*}
C_d(K)&=\{ P \in C(\overline K)\colon [K(P):K]\leq d \},\\
C_d'(K)&=\{ P \in C(\overline K)\colon [K(P):K]=d \}.
\end{align*}

With this definition, one sees that there are infinitely many points of degree $d$ on $C$ if and only if $\#C_d'(K) = \infty$.

Let $C$ be a curve over $\C$. We define \begin{align}W_d^r(C):=\{D\in\Pic^d(C): \ell(D)>r\}. \label{eq:Wdr_over_C}\end{align} We will also sometimes need to use $W_d^0$ in a more general setting. Let $R$ be a commutative ring and let $C$ be a smooth projective curve over $R$ with geometrically irreducible fibers. For simplicity, we also assume that $C(R) \neq \emptyset$ so that the Picard functor is representable. Then $\Sym^d(C)$ will denote the $d$-th symmetric power of $C$ on $R$. We define \[\phi_d:\Sym^d(C)\to\Pic^d (C)\] to be the morphism that sends an effective divisor of degree $d$ to its associated line bundle. Furthermore, we use \begin{align}W_d^0 C := \textup{Im } \phi_d \subset \Pic^d(C)\label{eq:Wd0}\end{align} to denote the scheme theoretic image of $\phi_d$. Note that this definition matches with the one over $\C$, since a line bundle has a non-trivial section if and only if it comes from an effective divisor.

One way to characterize the infinitude of $\#C_d'(K)$ is the following theorem.

\begin{thm}[{\cite[Theorem 4.2. (1)]{BELOV}}]\label{translate_of_abelian_variety_thm}
    Let $C$ be a curve over a number field $K$. Then $\#C_d'(K) = \infty$ if and only if at least one of the following two statements holds:
    \begin{enumerate}[(1)]
        \item There exists a map $C\to \mathbb{P}^1$ of degree $d$ over $K$.
        \item There exists a degree $d$ point $x\in C$ and a positive rank abelian subvariety $A\subset \Pic^0 (C)$ such that $\phi_d(x_1+\ldots+x_d)+A\subset W^0_dC$, where $x_1,\ldots,x_d$ are the Galois conjugates of $x$.
    \end{enumerate} 
\end{thm}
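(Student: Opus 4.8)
The plan is to translate the statement into a question about rational points on $\Sym^d C$ and on its image $W^0_d C \subset \Pic^d C$, and then to play Faltings' theorem on subvarieties of abelian varieties (Mordell--Lang) off against Hilbert's irreducibility theorem. The basic dictionary is that a closed point $x$ of degree $d$, with conjugates $x_1,\dots,x_d$, is the same datum as a $K$-rational effective divisor $D_x = x_1+\cdots+x_d$, i.e.\ a $K$-point of $\Sym^d C$ whose support is a single Galois orbit of size $d$; its image $\phi_d(D_x)$ is then a $K$-point of $W^0_d C$. Conversely, the fiber of $\phi_d$ over a class $[L]\in W^0_d C(K)$ is the complete linear system $|L|\cong\PP^{\ell(L)-1}$, a projective space defined over $K$, which therefore has a $K$-point whenever it is non-empty. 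Throughout I would keep in mind that being a genuine degree-$d$ point, rather than a divisor that splits off a lower-degree rational part, is an \emph{arithmetic} condition on a $K$-point that is invisible over $\overline{K}$.

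For the forward implication, suppose $\#C_d'(K)=\infty$, so that we obtain infinitely many genuine degree-$d$ divisors $D_{x^{(i)}}\in\Sym^d C(K)$ and hence infinitely many classes $\phi_d(D_{x^{(i)}})\in W^0_d C(K)$. I would split into two cases according to whether these classes take infinitely many or only finitely many values. If infinitely many values occur, then fixing one of them to identify $\Pic^d C$ with $\Jac C$ and applying Faltings' theorem, the $K$-points of $W^0_d C$ lie in finitely many translates of abelian subvarieties contained in $W^0_d C$; one such translate contains infinitely many of our classes, so it is a coset $P_0+A$ with $A$ of positive rank, and I may take $P_0=\phi_d(D_x)$ for one of the degree-$d$ points $x$ lying in it, which is exactly alternative (2). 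If instead only finitely many values occur, then by pigeonhole a single fiber $|L|\cong\PP^r$ contains infinitely many of the $D_{x^{(i)}}$; since $\PP^r(K)$ is infinite only when $r\geq1$, the system has dimension at least $1$. Choosing two of these divisors $D_x$ and $D_{x'}$ — each a single Galois orbit, hence either equal or disjoint — yields two disjoint members of the pencil they span, so that pencil is base-point-free and defines a morphism $C\to\PP^1$ of degree $\deg D_x = d$, which is alternative (1).

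For the reverse implication, alternative (1) is handled by Hilbert's irreducibility theorem: a degree-$d$ morphism $C\to\PP^1$ corresponds to a degree-$d$ extension of function fields $K(C)/K(t)$, and infinitely many specializations $t\mapsto t_0\in\PP^1(K)$ keep the fiber irreducible, producing infinitely many distinct degree-$d$ points. For alternative (2), the positive rank of $A$ makes $A(K)$, and hence $(P_0+A)(K)\subseteq W^0_d C(K)$, infinite; each such class, having a non-empty $K$-rational linear system, is $\phi_d$ of a $K$-rational effective divisor of degree $d$, so I obtain infinitely many $K$-points of $\Sym^d C$.

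The step I expect to require the most care — the main obstacle — is precisely this last point: extracting from $P_0+A$ infinitely many divisors of degree \emph{exactly} $d$ (single Galois orbits) rather than divisors that split off a proper rational sub-divisor. Since every divisor splits over $\overline{K}$, irreducibility is not cut out by a proper closed subvariety, so Faltings and Hilbert irreducibility must be invoked arithmetically. My plan is to argue by contradiction: were all but finitely many of the divisors arising from $P_0+A$ reducible, then by pigeonhole infinitely many would decompose with a fixed split $d=b+(d-b)$, forcing infinitely many classes of $P_0+A$ into the closed locus $W^0_b C + W^0_{d-b} C\subset\Pic^d C$. A further application of Faltings to this intersection would exhibit a positive-rank sub-translate and let me induct on $d$ (the base cases being Faltings' theorem for $C(K)$ and the positive-rank genus-one case); the same dichotomy applied in lower degree would ultimately either recover a base-point-free pencil of degree $d$ as in the forward direction, giving alternative (1), or contradict the positive rank of $A$, so that infinitely many genuine degree-$d$ points must in fact occur.
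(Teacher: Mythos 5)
The paper does not actually prove this statement---it is quoted verbatim from \cite{BELOV}---so your attempt has to be measured against the proof given there. Three of your four implications are correct and essentially reproduce the published argument: in the forward direction, the dichotomy between infinitely many classes (Faltings/Mordell--Lang applied to $W^0_dC\subset\Pic^d C$, translating so the coset passes through $\phi_d(D_x)$ for one of the degree-$d$ points) and finitely many classes (a pencil spanned by two Galois-orbit divisors, base-point-free precisely because distinct closed points have disjoint support) is the standard route, and deducing infinitude from a degree-$d$ function via Hilbert irreducibility is likewise exactly what is done.

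The genuine gap is in (2) $\Rightarrow$ infinitude, at the step you yourself flagged as the main obstacle. A minor issue first: a $K$-point of $\Pic^d C$ need not be represented by a line bundle defined over $K$, so the fibre of $\phi_d$ over a class in $\bigl(\phi_d(x_1+\cdots+x_d)+A\bigr)(K)$ is a priori a Severi--Brauer variety and may have no $K$-point; this is fixable (the obstruction map $A(K)\to\operatorname{Br}(K)$ is a homomorphism from a finitely generated group to a torsion group, hence has finite-index kernel; alternatively use uniqueness of the divisor when $\ell(L)=1$, or a rational point on $C$, which $X_0(N)$ has). The major issue is that your inductive scheme does not close. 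If almost all the divisors split, pigeonholing a type $d=b+(d-b)$ and applying Faltings again produces a positive-rank coset inside $W^0_bC$ or $W^0_{d-b}C$ anchored at a \emph{lower-degree} point, and the induction then terminates in ``infinitely many points of some degree $e<d$''---which is neither alternative (1) nor a contradiction to the positive rank of $A$, and which does not imply infinitude in degree exactly $d$: this very paper studies curves $X_0(N)$ with infinitely many quartic points but only finitely many quintic points. Nor can any Hilbert-irreducibility argument over the base $A$ rescue the transitivity of the Galois action on the fibres: $A(K)$ is thin (it is covered by finitely many translates of $[2]A(K)$), so abelian varieties fail the Hilbert property; and there is no Zariski-closed locus in $\Sym^d C$ or $\Pic^d C$ detecting splitting over $K$, since every divisor splits over $\Qbar$ (indeed $W^0_dC\subseteq W^0_1C+W^0_{d-1}C$ always). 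A correct proof must exploit the hypothesis that the translate passes through the class of a \emph{genuine} degree-$d$ point $x$---for instance, if $\ell(\phi_d(x_1+\cdots+x_d))\geq 2$, then any $K$-rational pencil through the orbit divisor $x_1+\cdots+x_d$ is automatically base-point-free, because a $K$-rational base locus would be a proper Galois-stable subdivisor of a single orbit, yielding alternative (1)---whereas your sketch never uses this anchoring, even though it is exactly what separates degree-$d$ infinitude from lower-degree infinitude. For how this is handled in full, see the proof of Theorem 4.2 in \cite{BELOV}.
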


Regarding the minimum density degree, Kadets and Vogt \cite{KV2025} gave a characterization when $\min(\delta(C/K))$ and $\min(\wp(C/K))$ are equal to $d$ for small values of $d$, which encompasses the previous results of Harris-Silverman \cite{HarrisSilverman91} and Abramovich-Harris \cite{AbramovichHarris91}.

\begin{thm}[{\cite[Theorem 1.2]{KV2025}}]\label{kadetsvogt1.2}
    Suppose that $C$ is a nice curve over a number field $K$. Then the following statements hold:
    \begin{enumerate}[(1)]
        \item If $\min(\delta(C/K))=2$, then $C$ is a double cover of $\mathbb{P}^1$ or an elliptic curve of positive rank over $K$.
        \item If $\min(\delta(C/K))=3$, then one of the following three cases holds:
        \begin{enumerate}[(a)]
            \item $C$ is a triple cover of $\mathbb{P}^1$ or an elliptic curve of positive rank over $K$.
            \item $C$ is a smooth plane quartic with no rational points, positive rank Jacobian, and at least one cubic point.
            \item $C$ is a genus $4$ Debarre-Fahlaoui curve.
        \end{enumerate}
        \item If $\min(\wp(C/K))=d\leq3$, then $C_{\overline{K}}$ is a degree $d$ cover of $\mathbb{P}^1$ or an elliptic curve.
        \item If $\min(\wp(C/K))=d=4,5$, then either $C_{\overline{K}}$ is a Debarre-Fahlaoui curve, or $C_{\overline{K}}$ is a degree $d$ cover of $\mathbb{P}^1$ or an elliptic curve.
    \end{enumerate}
\end{thm}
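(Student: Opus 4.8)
The plan is to route everything through Theorem \ref{translate_of_abelian_variety_thm}. Suppose $\min(\delta(C/K))=d$, so that $\#C_d'(K)=\infty$ while lower-degree points are finite. By Theorem \ref{translate_of_abelian_variety_thm}, either (1) $C$ admits a degree $d$ map to $\mathbb{P}^1$ over $K$ --- in which case we are immediately in the $\mathbb{P}^1$-cover conclusion --- or (2) there is a degree $d$ point $x$ with Galois conjugates $x_1,\dots,x_d$ and a positive-rank abelian subvariety $A\subseteq\Pic^0(C)$ such that the translate $T:=\phi_d(x_1+\dots+x_d)+A$ lies in $W_d^0 C$. The whole content of the theorem is then a geometric classification of the positive-dimensional abelian-variety translates $T\subseteq W_d^0 C$ that can occur for small $d$.

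The first step is a dimension count on the symmetric power. Write $a:=\dim A\geq 1$ and let $r$ be the dimension of the generic fibre of $\phi_d$ over $T$, so that the generic class in $T$ moves in a linear system $g^r_d$. Since $\phi_d^{-1}(T)\subseteq\Sym^d(C)$ and $\dim\Sym^d(C)=d$, we obtain the basic inequality $a+r\leq d$. If $r\geq 1$, then $C$ carries a $g^1_d$ (a sub-pencil of the $g^r_d$), hence a map to $\mathbb{P}^1$ of degree at most $d$; since points of degree $<d$ are finite, the base-point-free part cannot have smaller degree, so this map has degree exactly $d$ and we land again in the $\mathbb{P}^1$-cover case. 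The interesting regime is $r=0$, where $\phi_d$ is birational onto $T$ and we genuinely have an $a$-dimensional family of rigid effective divisors of degree $d$ parametrized by $A$.

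The heart of the argument is to show that such a rigid translate must originate from a map to a lower curve. Using the difference maps on $\Sym^d(C)$ together with the theory of special subvarieties of symmetric powers (Debarre--Fahlaoui \cite{DebarreFahlaoui}), one proves a structural dichotomy: either there is a nonconstant morphism $\pi\colon C\to C'$ over $K$ and a positive-rank $A'\subseteq\Jac(C')$ with $A=\pi^*A'$ and a translate of $A'$ inside $W^0_{d/\deg\pi}(C')$, or $C$ falls into an explicit list of low-genus exceptions. In the extreme case $C'=E$ an elliptic curve with $\deg\pi=d$ (so that $W_1^0 E=E$), this yields a degree $d$ map $C\to E$, and the positive rank of $A\cong E$ forces $E$ to have positive rank over $K$, giving the elliptic-cover alternative in parts (1) and (2)(a). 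When instead $C'$ has genus $\geq 1$ and $\deg\pi<d$, the family pulls back genuinely lower-degree points and one is in the Debarre--Fahlaoui case; a Castelnuovo--Severi analysis pins down the minimal such configuration as the genus $4$ Debarre--Fahlaoui curve for $d=3$ (part (2)(c)). The residual non-factoring exceptions are handled by explicit Clifford and Brill--Noether estimates in genus $\leq 4$, which for $d=3$ isolate the smooth plane quartic with no rational point, positive-rank Jacobian, and a cubic point (part (2)(b)). Finally, the distinction between the $\min(\delta)$ statements and the $\min(\wp)$ statements (parts (3)--(4)) is arithmetic: $\min(\wp)$ permits a finite base change $K\to L$, so only the geometry of $C_{\overline{K}}$ survives and the positive-rank hypotheses may be dropped, which is precisely why parts (3) and (4) are purely geometric and why Debarre--Fahlaoui curves appear already for $d=4,5$.

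I expect the main obstacle to be exactly this structural dichotomy for rigid translates: proving that a positive-dimensional abelian subvariety of $W_d^0 C$ with rigid fibres must, outside an explicit finite list of low-genus curves, be pulled back from a map to a lower curve. This is where the Debarre--Fahlaoui machinery on subvarieties of symmetric powers is indispensable, and it is simultaneously the step that produces the exceptional curves (the plane quartic and the genus $4$ Debarre--Fahlaoui curve) through sharp dimension estimates. The secondary difficulty is the descent bookkeeping needed to keep all morphisms and abelian subvarieties defined over $K$ in parts (1) and (2), as opposed to merely over $\overline{K}$ in parts (3) and (4).
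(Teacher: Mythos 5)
This statement is not proved in the paper at all: it is quoted verbatim from \cite[Theorem 1.2]{KV2025}, so there is no internal proof to compare against, and your proposal must be measured against the source's argument. Your opening reduction is indeed how that proof begins: the dichotomy of \Cref{translate_of_abelian_variety_thm} (from \cite{BELOV}), the count $a+r\leq d$ coming from $\dim \Sym^d(C)=d$, the disposal of the case $r\geq 1$ via a $g^1_d$ and Hilbert irreducibility, and the bound $\dim A\leq d/2-r$, which is exactly \Cref{DF_AV_dim} from \cite{DebarreFahlaoui}. That skeleton is correct and well chosen.

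The genuine gap is your ``structural dichotomy'' for rigid translates, which you attribute to ``the theory of special subvarieties of symmetric powers (Debarre--Fahlaoui)''. Debarre and Fahlaoui prove only the dimension bound of \Cref{DF_AV_dim} and \emph{construct} the exceptional curves on $\Sym^2(E)$; they do not prove, and their difference-map techniques do not yield, the converse classification that a positive-dimensional abelian subvariety of $W_d^0C$ of near-maximal dimension forces $C$ to be Debarre--Fahlaoui or forces a factorization through a lower curve. That converse is precisely the main new theorem of \cite{KV2025}, and its proof goes through their new machinery of ``subspace configurations'' attached to the Galois orbits of low-degree points, an inductive linear-algebra argument with no analogue in \cite{DebarreFahlaoui}. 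As written, your sketch assumes the hardest step of the theorem. Two secondary inaccuracies: for $d=3$ a factorization $\pi\colon C\to C'$ with $1<\deg\pi<d$ is impossible since $\deg\pi$ must divide $3$, so your ``Castelnuovo--Severi analysis'' of intermediate covers is vacuous there --- the genus $4$ Debarre--Fahlaoui curve of part (2)(c) arises instead from the equality case $\dim A=1$ of the rigid ($r=0$) situation; and in part (2)(b) the arithmetic refinements still require proof (e.g.\ a smooth plane quartic with a rational point admits a degree $3$ projection over $K$ and so falls into case (a), which is why ``no rational points'' can be asserted), while dropping the rank hypotheses in parts (3)--(4) needs the fact that every positive-dimensional abelian variety over a number field acquires positive rank after a finite extension.
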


Debarre-Fahlaoui curves mentioned here are defined in \Cref{debarrefahlaoui_def}.

\begin{definition}
    For a curve $C$ defined over a field $K$, the $K$-gonality $\textup{gon}_K C$ is the smallest integer $d$ such that there exists a morphism of degree $d$ from $C$ to $\mathbb{P}^1$ defined over $K$.
\end{definition}

The question of determining whether the curve $C/K$ has infinitely many points of degree $d$ over $K$ is closely related to the $K$-gonality of $C$. For example, Frey \cite{frey} proved that if a curve $C$ defined over a number field $K$ has infinitely many points of degree $\leq d$ over $K$, then $\textup{gon}_K C\leq2d$.

We know from \cite[Theorem 1.1]{NajmanOrlic22} all curves $X_0(N)$ with $\Q$-gonality at most $6$. In particular, the curve $X_0(N)$ has $\Q$-gonality equal to $5$ if and only if $N=109$. Now we want to get an upper bound for $N$ such that $X_0(N)$ admits a degree $5$ morphism to an elliptic curve. We start with the following result that gives a linear lower bound on the $\C$-gonality of the modular curve in terms of the index of its congruence subgroup.

\begin{thm}[{\cite[Section 0.2]{abramovich} and Appendix 2 to \cite{Kim2002}}]\label{abramovichbound}
    Let $X_\Gamma$ be the algebraic curve corresponding to a congruence subgroup $\Gamma\subseteq \SL_2(\Z)$ of index
    $D_\Gamma=[\SL_2(\Z):\pm\Gamma]$. Then \[D_\Gamma\leq \frac{2^{15}}{325}\textup{gon}_\C(X_\Gamma).\]
\end{thm}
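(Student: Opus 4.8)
The plan is to assemble three analytic inputs: the Li--Yau inequality relating the gonality to the bottom of the Laplace spectrum, the computation of the hyperbolic area of $X_\Gamma$ in terms of the index $D_\Gamma$, and a lower bound on the first eigenvalue that is special to congruence subgroups. Throughout, I would equip the open modular curve $\Gamma\backslash\mathbb{H}$ with the hyperbolic metric of constant curvature $-1$ descending from the upper half-plane, and let $\lambda_1$ denote the smallest positive eigenvalue of the corresponding Laplacian (the bottom of the discrete spectrum).

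First I would invoke the Li--Yau inequality. Write $\gamma=\gon_\C(X_\Gamma)$ and fix a degree-$\gamma$ morphism $\phi\colon X_\Gamma\to\PP^1$. Viewing $\PP^1$ as the round sphere $S^2$, the map $\phi$ is a conformal branched cover of degree $\gamma$, so the Hersch--Li--Yau balancing argument (recentre the three coordinate functions of $S^2\subset\mathbb{R}^3$ by a conformal automorphism of the sphere, then feed them into the Rayleigh quotient) yields
\[ \lambda_1\cdot\operatorname{Area}(X_\Gamma)\ \leq\ 8\pi\gamma. \]
Next I would compute the area. The hyperbolic area of $X(1)=\PSL_2(\Z)\backslash\mathbb{H}$ equals $\pi/3$, and since $X_\Gamma\to X(1)$ is a covering of degree $D_\Gamma=[\PSL_2(\Z):\overline\Gamma]=[\SL_2(\Z):\pm\Gamma]$ away from a measure-zero set of elliptic points and cusps, $\operatorname{Area}(X_\Gamma)=\tfrac{\pi}{3}D_\Gamma$. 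Finally, the congruence property enters through the spectral gap: Selberg's theorem gives $\lambda_1\geq 3/16$, and the sharper bound $\theta\leq 7/64$ towards Ramanujan--Petersson for $\GL_2$ due to Kim and Sarnak upgrades this to
\[ \lambda_1\ \geq\ \tfrac14-\bigl(\tfrac{7}{64}\bigr)^2\ =\ \tfrac{975}{4096}. \]
Substituting the area and this eigenvalue bound into the Li--Yau inequality and solving for $D_\Gamma$ gives
\[ D_\Gamma\ \leq\ \frac{8\pi\gamma}{\lambda_1\cdot\pi/3}\ =\ \frac{24\gamma}{\lambda_1}\ \leq\ \frac{24\cdot 4096}{975}\,\gamma\ =\ \frac{2^{15}}{325}\,\gamma, \]
as claimed.

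The genuine content lies entirely in the cited analytic theorems, so the main obstacle is not the bookkeeping but ensuring the hypotheses apply in the noncompact, orbifold setting. The curve $X_\Gamma$ carries a finite-area but noncompact hyperbolic metric with cusps, and its elliptic points make it an orbifold rather than a smooth surface. I would therefore need the version of the Li--Yau/Hersch test-function argument valid for finite-volume surfaces (where the continuous spectrum begins at $1/4$, comfortably above the discrete $\lambda_1$ being bounded), and I would confirm that the Selberg and Kim--Sarnak estimates are stated for the relevant $\lambda_1$ on $\Gamma\backslash\mathbb{H}$ at the correct level. Obtaining the constant \emph{exactly} equal to $2^{15}/325$ hinges on using the Kim--Sarnak value $\tfrac{975}{4096}$ in place of Selberg's weaker $3/16$; the latter would only give the coarser bound $D_\Gamma\leq 128\,\gamma$.
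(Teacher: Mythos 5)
Your proposal is correct and follows essentially the same route as the paper, whose proof simply cites Abramovich's inequality $\lambda_1 D_\Gamma \leq 24\cdot\gon_\C(X_\Gamma)$ together with the Kim--Sarnak bound $\lambda_1 \geq \frac{975}{4096}$ and multiplies them out, exactly as you do. Your re-derivation of Abramovich's inequality via the Li--Yau conformal-volume argument and the area computation $\operatorname{Area}(X_\Gamma)=\frac{\pi}{3}D_\Gamma$ is precisely the content of the cited \cite[Section 0.2]{abramovich}, including the finite-volume caveats you correctly flag, so it unpacks the black box rather than departing from it.
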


\begin{proof}
    Abramovich \cite{abramovich} proved the inequality \[\lambda_1 D_\Gamma\leq 24\cdot\textup{gon}_\C(X_\Gamma), \] where $\lambda_1$ is the smallest positive eigenvalue of the Laplacian operator on the Hilbert space $L^2(X_\Gamma)$. Kim and Sarnak \cite{Kim2002} showed that $\lambda_1\geq\displaystyle\frac{975}{4096}$.
\end{proof}

\begin{remark}
    In the same paper \cite{abramovich}, Abramovich proved that $\lambda_1\geq\displaystyle\frac{21}{100}$ (this was later improved to $\displaystyle\frac{975}{4096}$ by Kim and Sarnak). Selberg's eigenvalue conjecture \cite{selberg} states that $\lambda_1\geq\displaystyle\frac{1}{4}$.
\end{remark}

We can also get a bound on the $\Q$-gonality of the curve $X_0(N)$ by counting the $\F_{p^2}$-points.

\begin{lem}[Ogg, \cite{Ogg74}]\label{Ogg} For a prime $p$ with $p\nmid N$, let $\#X_0(N)(\F_{p^2})$ denote the number of $\F_{p^2}$-rational points on $X_0(N)$.
Then $$\#X_0(N)(\F_{p^2})\geq L_p(N):=\frac{p-1}{12} \psi(N)+2^{\omega(N)},$$
where $\psi(N)=N\displaystyle\prod_{{q|N}\atop{q \textup{ prime}}}\left(1+\frac{1}{q}\right)$ and $\omega(N)$ is the number of distinct prime factors of $N$.
\end{lem}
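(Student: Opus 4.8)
The plan is to produce two disjoint families of $\F_{p^2}$-rational points on $X_0(N)$ whose sizes account for the two summands of $L_p(N)$: the supersingular points, contributing the main term $\frac{p-1}{12}\psi(N)$, and a distinguished set of cusps, contributing $2^{\omega(N)}$. Since $p\nmid N$, the curve $X_0(N)$ has good reduction at $p$, so both families consist of honest smooth $\F_{p^2}$-points, and it suffices to bound each family from below and to check that they are disjoint.

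For the supersingular term I would first recall that every supersingular elliptic curve in characteristic $p$ admits a model $E$ over $\F_{p^2}$ whose Frobenius endomorphism equals the scalar $-p$. Because $[-p]$ acts as multiplication by $-p$ on $E[N]$ (using $\gcd(p,N)=1$), every cyclic subgroup $C\subset E$ of order $N$ is Galois-stable, so each of the $\psi(N)$ such subgroups yields a point $(E,C)\in X_0(N)(\F_{p^2})$. To count these points up to isomorphism I would invoke the Eichler mass formula $\sum_{E\text{ ss}}\tfrac{1}{\#\Aut E}=\tfrac{p-1}{24}$ together with the fact that the forgetful map $X_0(N)\to X(1)$ has degree $\psi(N)$; read as a degree-$\psi(N)$ map of moduli stacks, this gives $\sum_{(E,C)}\tfrac{1}{\#\Aut(E,C)}=\tfrac{p-1}{24}\psi(N)$, the sum running over supersingular points of $X_0(N)$.

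The passage from this weighted count to an honest lower bound on the number of points is the crux. Since $-1\in\Aut(E)$ fixes every subgroup $C$, one has $\#\Aut(E,C)\geq 2$, so every summand above is at most $\tfrac12$; hence the number of supersingular points is at least $2\cdot\tfrac{p-1}{24}\psi(N)=\tfrac{p-1}{12}\psi(N)$.

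Finally, for the cusps I would count the unitary divisors of $N$: each $d\mid N$ with $\gcd(d,N/d)=1$ gives a single cusp of $X_0(N)$, which is therefore fixed by Galois and defined over $\Q$, hence reduces to an $\F_{p^2}$-point (in fact an $\F_p$-point). There are exactly $2^{\omega(N)}$ such divisors, giving $2^{\omega(N)}$ distinct rational cusps; as cusps are not moduli of supersingular elliptic curves they are disjoint from the first family. Adding the two counts yields $\#X_0(N)(\F_{p^2})\geq \tfrac{p-1}{12}\psi(N)+2^{\omega(N)}=L_p(N)$. I expect the main obstacle to be the stacky bookkeeping in the mass-formula step, namely justifying that the degree-$\psi(N)$ cover multiplies the automorphism-weighted mass by $\psi(N)$ and that $\pm 1$ is the only automorphism one may safely discard; the cusp count and the disjointness are comparatively routine.
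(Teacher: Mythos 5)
Your proof is correct and is essentially the argument the paper relies on: the paper does not prove the lemma itself but defers to \cite[Lemma 3.20]{BGGP}, whose proof is exactly your combination of the Deuring--Eichler mass formula $\sum_E 1/\#\Aut(E)=\frac{p-1}{24}$, the model with Frobenius $-p$ making all $\psi(N)$ cyclic subgroups Galois-stable, the bound $\#\Aut(E,C)\geq 2$ from $[-1]$, and the $2^{\omega(N)}$ rational cusps. The only step you leave tacit is that these distinct rational cusps remain pairwise distinct after reduction modulo $p$, which holds because the cuspidal subscheme of $X_0(N)$ is finite \'etale over $\Z[1/N]$, so for $p\nmid N$ reduction is injective on cusps.
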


Lemma~\ref{Ogg} is not explicitly proven in \cite{Ogg74}, so we briefly provide 
the underlying point-counting argument.
Let $p\nmid N$ be a prime. Then $X_0(N)$ has good reduction at $p$.
Over $\mathbb{F}_{p^2}$, $X_0(N)(\mathbb{F}_{p^2})$ contains at least $2^{\omega(N)}$ cusps.
For non-cuspidal points, one considers supersingular elliptic curves in characteristic $p$.
For a supersingular elliptic curve $E$, the Frobenius endomorphism acts as multiplication by an integer over
$\mathbb{F}_{p^2}$, which implies that all $\psi(N)$ cyclic subgroups $C$ of $E$ of order $N$ are defined over $\mathbb{F}_{p^2}$.
Each such pair $(E,C)$ yields a non-cuspidal $\mathbb{F}_{p^2}$-rational point on $X_0(N)$.
After identifying points via automorphisms of $E$, one obtains at least
$(p-1)\psi(N)/12$ distinct non-cuspidal $\mathbb{F}_{p^2}$-rational points
lying above supersingular points of $X(1)$.
This follows from the Deuring-Eichler mass formula; see, for instance,
\cite[Lemma~3.20]{BGGP}.

\begin{cor}\label{cor:ogg}Let $N$ be an integer and $p$ a prime not dividing $N$, and assume that there exists a map $f : X_0(N) \to E$ of degree $d$ to an elliptic curve $E$ over $\Q$. Then
\begin{align*}
    d &\geq \frac {1} {\#E(\F_{p^2})}\left(\frac{p-1}{12} \psi(N)+2^{\omega(N)}\right) \geq \frac {1} {(p+1)^2}\left(\frac{p-1}{12} \psi(N)+2^{\omega(N)}\right).
\end{align*}
\end{cor}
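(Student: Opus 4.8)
The plan is to bound the number of $\F_{p^2}$-points on $X_0(N)$ from above using the map $f$, and then to compare this with Ogg's lower bound in \Cref{Ogg}. First I would reduce everything modulo $p$. Since $p \nmid N$, the curve $X_0(N)$ has good reduction at $p$, and after spreading the morphism $f$ out over $\Z_{(p)}$ one obtains a non-constant morphism $\tilde f \colon \tilde X_0(N) \to \tilde E$ of degree $d$ between the special fibres over $\F_p$. The key inequality is then an elementary fibre count: every $\F_{p^2}$-rational point of $\tilde X_0(N)$ maps under $\tilde f$ to an $\F_{p^2}$-rational point of $\tilde E$, and each fibre of $\tilde f$ is a scheme of length at most $d$, hence contains at most $d$ points. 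This gives $\#X_0(N)(\F_{p^2}) \leq d \cdot \#E(\F_{p^2})$.

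Combining this with the lower bound $\#X_0(N)(\F_{p^2}) \geq L_p(N) = \frac{p-1}{12}\psi(N) + 2^{\omega(N)}$ from \Cref{Ogg} immediately yields the first inequality $d \geq L_p(N)/\#E(\F_{p^2})$. For the second inequality it suffices to bound $\#E(\F_{p^2})$ from above, and this is exactly the Hasse--Weil bound: writing $\#E(\F_{p^2}) = p^2 + 1 - a$ with $|a| \leq 2\sqrt{p^2} = 2p$, one gets $\#E(\F_{p^2}) \leq p^2 + 2p + 1 = (p+1)^2$, so that $1/\#E(\F_{p^2}) \geq 1/(p+1)^2$ and the chain of inequalities follows.

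The step requiring the most care is the reduction argument. One must check that the morphism $f$ specialises to a morphism whose fibres acquire no more than $d$ points, which is where the good-reduction hypothesis $p \nmid N$ enters, and one must make sense of $\#E(\F_{p^2})$, which I would interpret as the number of points on the reduction of $E$ modulo $p$. For the Hasse--Weil step to apply in the clean form above it is cleanest to take $p$ coprime to the conductor of $E$, so that $\tilde E$ is again an elliptic curve over $\F_p$; otherwise one argues on the smooth locus of the reduction, where the count is only smaller. All remaining steps are formal, and the two stated inequalities drop out directly from the fibre count together with \Cref{Ogg} and the Hasse--Weil estimate.
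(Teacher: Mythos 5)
Your proof is correct and follows essentially the same route as the paper: reduce modulo $p$ to get a degree-$d$ map $X_0(N)_{\F_p}\to E_{\F_p}$ (the paper cites Nguyen--Saito for this step), count at most $d$ points of $X_0(N)(\F_{p^2})$ in each fibre, combine with \Cref{Ogg}, and finish with the Hasse bound $\#E(\F_{p^2})\leq (p+1)^2$. Your worry about $p$ dividing the conductor of $E$ is moot: a non-constant map $X_0(N)\to E$ forces $E$ to be an isogeny factor of $J_0(N)$, so $\operatorname{cond}(E)\mid N$ and $p\nmid N$ already guarantees good reduction of $E$, exactly as the paper implicitly uses.
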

\begin{proof}Since both $X_0(N)$ and $E$ have good reduction at $p$, $f$ induces a map
$f_{\F_p} \colon  X_0(N)_{\F_p}\to  E_{\F_p}$
of degree $d$ (cf. \cite[Proof of Theorem 3.2]{NguyenSaito}). The first inequality now follows from \Cref{Ogg}, since every point in $E(\F_{p^2})$ has at most $d$ points of $X_0(N)(\F_{p^2})$ mapping to it under $f_{\F_p}$.
The second inequality is implied by the Hasse bound $\# E(\F_{p^2})\leq (p+1)^2$  \cite[Theorem V.1.1]{silverman}.

\end{proof}

\section{Positive rank pentaelliptic curves}\label{positive_rank_pentaelliptic}


Note that if a curve $C$ admits a $\Q$-rational map of degree $5$ onto $\mathbb P^1$ or an elliptic curve of positive rank, then one can easily prove that $C_5(\Q)$ is infinite; hence we now first consider when a curve admits a $\Q$-rational map of degree 5 to a positive rank elliptic curve.

\begin{definition}
If a curve $C$ over a number field $K$ of genus $g(C)\geq 2$ admits a $K$-rational map of degree 5 to an elliptic curve $E$, then we say that $C$ is {\it pentaelliptic} over $K$.
If $C$ admits a map of degree $5$ to an elliptic curve $E$ such that $E(K)$ has Mordell-Weil rank at least $1$, then we call $C$ {\it positive rank pentaelliptic} over $K$.
\end{definition}

The goal of this section is to prove the following theorem:

\begin{thm}\label{thm:pos_rank_penta}
There are no positive rank pentaelliptic modular curves $X_0(N)$ over $\Q$. 
\end{thm}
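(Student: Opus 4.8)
The plan is to combine the complex gonality bound of \Cref{abramovichbound} with the point-counting estimate of \Cref{cor:ogg} to reduce the claim to a finite, explicit computation. First I would observe that if $X_0(N)$ is pentaelliptic, then composing the degree $5$ map $X_0(N)\to E$ with the double cover $E\to\PP^1$ produces a map of degree $10$ to $\PP^1$, so $\gon_\C(X_0(N))\leq 10$. Since the index of $\Gamma_0(N)$ equals $\psi(N)$, \Cref{abramovichbound} gives $\psi(N)\leq \tfrac{2^{15}}{325}\cdot 10 \approx 1008$, leaving only finitely many levels $N$, all with $N$ bounded by roughly $1000$.

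For each surviving level, any non-constant map $X_0(N)\to E$ factors through the Jacobian, so $E$ must be isogenous to an elliptic-curve isogeny factor of $\Jac(X_0(N))$; equivalently, $E$ is an elliptic curve of conductor dividing $N$. The positive-rank hypothesis then restricts $E$ to positive rank elliptic curves of conductor dividing $N$, the smallest such conductor being $37$. This reduces the problem to a finite list of pairs $(N,E)$, which can be read off from the newform decomposition of $\Jac(X_0(N))$.

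To eliminate these pairs I would sharpen \Cref{cor:ogg}: for a prime $p\nmid N$, reduction modulo $p$ gives a degree $5$ map $X_0(N)_{\F_p}\to E_{\F_p}$, so every point of $E(\F_{p^2})$ has at most $5$ preimages, whence by \Cref{Ogg}
\[
\frac{p-1}{12}\,\psi(N)+2^{\omega(N)} \;\leq\; \#X_0(N)(\F_{p^2}) \;\leq\; 5\,\#E(\F_{p^2}) \;=\; 5\bigl((p+1)^2-a_p(E)^2\bigr),
\]
where $a_p(E)$ is the trace of Frobenius of $E$ at $p$. For each pair $(N,E)$ I would search over small primes $p\nmid N$ for one at which this inequality \emph{fails}; choosing $p$ with $|a_p(E)|$ close to the Hasse bound makes $\#E(\F_{p^2})$ small and maximizes the chance of a contradiction. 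I expect this to dispose of the overwhelming majority of pairs, since the left-hand side grows linearly in $\psi(N)$ while the right-hand side is bounded by roughly $5(p+1)^2$.

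The main obstacle will be the residual pairs $(N,E)$ in which $N$ and the conductor of $E$ are both small, where the point count is too coarse to force a contradiction. For these I would argue case by case: apply the Castelnuovo--Severi inequality together with the explicit decomposition of $\Jac(X_0(N))$ and the known maps of $X_0(N)$ (to $\PP^1$ or to other elliptic factors) to exclude a map of degree exactly $5$ onto $E$, appeal to the $\Q$-gonality data toward the relevant factor, or, in the smallest cases, compute directly with an explicit model of $X_0(N)$ and its Atkin--Lehner quotients. Carefully handling this short finite list of exceptional levels is where the real work lies, since the uniform estimates above no longer suffice.
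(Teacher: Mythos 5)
Your reduction to finitely many levels matches the paper's first step: a pentaelliptic $X_0(N)$ has $\gon_\C\leq 10$, Abramovich--Kim--Sarnak (\Cref{abramovichbound}) bounds $\psi(N)$, Ogg's count (\Cref{cor:ogg}) trims further, and any target $E$ must have conductor $M\mid N$ with positive rank; your sharpened count $\#E(\F_{p^2})=(p+1)^2-a_p(E)^2$ is correct. The genuine gap is in your treatment of the residual pairs, which you yourself flag as ``where the real work lies'' but for which none of your proposed tools actually works. Take $(N,E)=(74,\lmfdbec{37}{a}{1})$: composing the degeneracy map $X_0(74)\to X_0(37)$ with the degree $2$ strong Weil parametrization gives a degree $4$ map to a positive rank elliptic curve, so no point count in any characteristic can exclude a degree $5$ map --- the counts are already consistent with degree $4$. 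Castelnuovo--Severi also fails: with a degree $5$ map to $E$ and a degree $2$ map to a quotient $Y$ it only gives $g(X_0(74))\leq 5+2g(Y)+4$, vacuous since $g(X_0(74))=8$. The same obstruction occurs for every positive rank $E$ of small modular degree (e.g.\ \lmfdbec{37}{a}{1}, \lmfdbec{43}{a}{1}) inside $J_0(N)$ for $N$ a multiple of the conductor. Worse, when $M$ is a proper divisor of $N$ the group $\operatorname{Hom}(J_0(N),E)$ has rank $n>1$ (one generator $f\circ\iota_{d,N,M}$ per divisor $d$ of $N/M$), so there are infinitely many candidate maps of unbounded degree, and ``compute directly with an explicit model'' is not a finite procedure for ruling out degree $5$.

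The idea you are missing is the paper's divisibility argument (\Cref{quadraticformprop}, \Cref{quadraticformcoefficients}, \Cref{coefficient_divisible_by_degf}): the degrees of all maps $X_0(N)\to E$ are the values of a positive definite quadratic form on $\operatorname{Hom}(J_0(N),E)\cong\Z^n$, and every coefficient $\left<f\circ\iota_{d_1,N,M},f\circ\iota_{d_2,N,M}\right>$ of this form is divisible by the modular degree $\deg f$ of the strong Weil parametrization $f\colon X_0(M)\to E$. Hence the value $5$ is represented only if $\deg f\mid 5$. If $\deg f=1$ then $X_0(M)=E$, and all genus $1$ curves $X_0(M)$ have rank $0$; if $\deg f=5$ then $E$ is one of the five curves of \Cref{modulardeg5prop}, all of rank $0$. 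This disposes of \emph{all} pairs with $N<468$ uniformly, including exactly the small-genus, small-modular-degree cases where your point counts and Castelnuovo--Severi demonstrably cannot.
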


The proof of this theorem is split up in two parts. First, in \Cref{lem:pentaelliptc_bound} we will get a lower bound on $N$ for which $X_0(N)$ is not pentaelliptic using Abramovich lower bound on the $\C$-gonality (\Cref{abramovichbound}) and Ogg's method (\Cref{cor:ogg}). After this result, there are finitely many $N$ left and they are dealt with in \Cref{deg5toellcurve_prop}, based on the techniques from \Cref{coefficient_divisible_by_degf}.

With \Cref{thm:pos_rank_penta} and \Cref{new_cond_quintic} in hand, it is now easy to limit the values of $N$ for which $\min(\delta(X_0(N)/\Q)) = 5$ to a reasonably small set.

\begin{prop}\label{new_cond_quintic_cor1}

If  $\min(\delta(X_0(N)/\Q)) = 5$, then $N$ has to be one of the following values:
\begin{align*}
N \in \{& 76, 84, 90, 93, 97, 108, 109, 112, 113, 115, 117, 127, 133, 137, 139, 147, 169 \}.
\end{align*}
\end{prop}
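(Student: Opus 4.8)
The plan is to turn the Kadets--Vogt dichotomy of \Cref{new_cond_quintic} into a genus bound, using \Cref{thm:pos_rank_penta} to eliminate the elliptic-curve alternative, and then to enumerate the finitely many surviving levels by the genus formula for $X_0(N)$.

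I would first record what $\min(\delta(X_0(N)/\Q)) = 5$ forces for an arbitrary level $N$. By definition it gives only finitely many closed points of degree $\leq 4$; in particular $X_0(N)$ admits no map of degree $\leq 4$ to $\PP^1$, so $\gon_\Q X_0(N) \geq 5$. It also means $N$ cannot be one of the levels with infinitely many points of degree $\leq 4$. Comparing the lists directly, the levels of Theorems \ref{thmquadratic} and \ref{thmcubic}, together with the genus-zero levels, are all contained in the list of \Cref{quarticthm}; hence $X_0(N)$ has infinitely many points of degree $\leq 4$ precisely when $N$ appears in \Cref{quarticthm}, and so $N$ must lie outside that list.

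Next I would bound the genus. Assume $g(X_0(N)) \geq 12$. Then \Cref{new_cond_quintic} yields a $\Q$-rational degree-$5$ map from $X_0(N)$ to $\PP^1$ or to a positive rank elliptic curve; the second possibility is excluded by \Cref{thm:pos_rank_penta}, so the map lands in $\PP^1$ and $\gon_\Q X_0(N) \leq 5$. Together with the inequality $\gon_\Q X_0(N) \geq 5$ above, this forces equality, and \cite[Theorem 1.1]{NajmanOrlic22} then gives $N = 109$. Since $g(X_0(109)) = 8 < 12$, this is a contradiction, so in fact every level with density degree $5$ satisfies $g(X_0(N)) \leq 11$. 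This is exactly the complementary range to case (2) of \Cref{KVthm} for $d = 5$, where the genus bound evaluates to $11$.

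The final step, which is the bulk of the computation, is to list all $N$ with $g(X_0(N)) \leq 11$ and discard those appearing in \Cref{quarticthm}; what remains must contain every density-degree-$5$ level, and one checks it equals the seventeen values claimed. Because $g(X_0(N))$ grows with $\psi(N)$, the condition $g(X_0(N)) \leq 11$ holds only for an explicitly bounded, finite set of $N$, so this enumeration is routine. The conceptual crux is really the genus bound of the previous paragraph: without \Cref{thm:pos_rank_penta} the elliptic-curve branch of \Cref{new_cond_quintic} would leave infinitely many potential levels, and it is precisely the non-existence of positive rank pentaelliptic $X_0(N)$ that collapses the problem to a finite genus enumeration.
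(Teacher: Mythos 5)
Your proposal is correct and takes essentially the same approach as the paper: both rest on \Cref{new_cond_quintic} together with \Cref{thm:pos_rank_penta} to kill the genus $\geq 12$ range, the gonality classification of \cite{NajmanOrlic22}, and the degree-$\leq 4$ lists of \Cref{thmquadratic,thmcubic,quarticthm}, followed by a finite enumeration. The only (cosmetic) difference is packaging: the paper excludes $N>191$ and then the levels with $\gon_\Q X_0(N)\geq 6$ and $g\geq 12$, while you fold the gonality information into the clean bound $g(X_0(N))\leq 11$ via the fact that $\Q$-gonality exactly $5$ occurs only for $N=109$; the two criteria carve out the identical set of seventeen levels.
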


\begin{proof}
If $N>191$, then $X_0(N)$ has $\Q$-gonality at least $6$ by \cite[Theorem 1.1]{NajmanOrlic22}. Furthermore, if $N>191$ then $X_0(N)$ also has genus $\geq 12$, and by \Cref{thm:pos_rank_penta}, there are no positive rank pentaelliptic curves over $\Q$. So, one can use \Cref{new_cond_quintic} to conclude that if $\min(\delta(X_0(N)/\Q)) = 5$, then $N\leq 191$. 

The list in the proposition was obtained by starting with all values of $N \leq 191$ but excluding the following:
\begin{itemize}
\item[-] The values of $N$  in \Cref{thmquadratic,thmcubic,quarticthm}, since these have $\min(\delta(X_0(N)/\Q)) \leq 4$.
\item[-] Those where $\gon_{\Q} X_0(N) \geq 6$ and $g(X_0(N)) \geq 12$, since there one can use \Cref{new_cond_quintic}.
\end{itemize}

\end{proof}

After \Cref{new_cond_quintic_cor1}, we are left with only 17 levels $N$ which we need to check in order to prove \Cref{densitydeg5thm}. But we will first give the proof of \Cref{thm:pos_rank_penta} which is still missing.

\subsection{Proof of \texorpdfstring{\Cref{thm:pos_rank_penta}}{the pentaelliptic Theorem}}
\begin{lem}\label{lem:pentaelliptc_bound} If $N \geq 468$, then $X_0(N)$ cannot be positive rank pentaelliptic over $\Q$.
\end{lem}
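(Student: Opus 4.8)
The plan is to combine the two gonality lower bounds from the preliminaries to rule out pentaellipticity for all sufficiently large $N$, and to optimize the prime $p$ so that the threshold comes out at $N \geq 468$. The key observation is that if $X_0(N)$ is positive rank pentaelliptic over $\Q$, then in particular it admits a degree $5$ map to an elliptic curve $E/\Q$, so both \Cref{abramovichbound} and \Cref{cor:ogg} apply with $d = 5$. I would first use \Cref{cor:ogg}: for any prime $p \nmid N$ we get
\[
5 \geq \frac{1}{(p+1)^2}\left(\frac{p-1}{12}\psi(N) + 2^{\omega(N)}\right) \geq \frac{(p-1)\psi(N)}{12(p+1)^2},
\]
which rearranges to an upper bound $\psi(N) \leq 60(p+1)^2/(p-1)$. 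Since $\psi(N) \geq N$ always, this already forces $N$ to be bounded once we fix a convenient small prime $p$; the factor $(p+1)^2/(p-1)$ is minimized near $p=3$, suggesting a small prime works well.

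The subtlety is the coprimality condition $p \nmid N$: a single fixed $p$ does not work for all $N$, so I would run the argument over a short list of small primes and, for each candidate $N$, select a prime $p$ from that list not dividing $N$. Concretely, I would take $p \in \{2,3,5,7,\dots\}$, and for a given $N$ pick the smallest such $p$ coprime to $N$; even in the worst case (say $N$ divisible by several small primes) one of the first few primes is available, and the resulting bound on $\psi(N)$ (hence on $N$) degrades only mildly. The bound from \Cref{cor:ogg} alone should already cut $N$ down to a finite range; pushing the exact cutoff to $468$ will require being slightly careful about which $p$ is used and tracking the $2^{\omega(N)}$ and $\psi(N) \geq N$ estimates precisely.

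To sharpen the threshold to exactly $N \geq 468$ I would also bring in \Cref{abramovichbound}, which gives $D_\Gamma \leq \frac{2^{15}}{325}\gon_\C(X_0(N))$. Since a degree $5$ map to an elliptic curve bounds the $\C$-gonality by $\gon_\C(X_0(N)) \leq 5 \cdot \gon_\C(E) = 10$ (composing with a degree $2$ map $E \to \PP^1$), and the index $D_\Gamma = [\SL_2(\Z) : \pm\Gamma_0(N)] = \psi(N)$, this yields another inequality $\psi(N) \leq \frac{2^{15}}{325}\cdot 10$. Comparing the two bounds and taking the better one for each regime of $N$ should produce the clean cutoff. I expect the main obstacle to be the bookkeeping in the coprimality step: ensuring that for every $N$ in the transitional range $192 \leq N < 468$ one has a valid small prime $p$ giving a bound strong enough, and verifying that at $N = 468$ the inequality genuinely fails while for all larger $N$ it continues to fail (using that $\psi(N)$ grows at least linearly). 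This is a finite check combined with a monotonicity argument, so no deep input is needed beyond the two gonality theorems already stated.
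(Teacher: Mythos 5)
Your proposal is correct and takes essentially the same route as the paper, which obtains this lemma from Ogg's $\F_{p^2}$ point-count bound (\Cref{cor:ogg}) applied with small primes $p\nmid N$, supplemented by Abramovich's gonality bound (\Cref{abramovichbound}); carrying out your plan, the worst case is $p=3$ with $2^{\omega(N)}\geq 2$, giving $\psi(N)\leq 6(80-2)=468$ and hence $N\leq 467$ when $3\nmid N$, while for $3\mid N$ one uses $p\in\{2,5,7\}$ together with $\psi(N)\geq\tfrac{4}{3}N$ (resp. $\psi(N)\geq 2N$) to get even smaller cutoffs, exactly reproducing the threshold $468$. The only minor difference is that Abramovich's bound is not actually needed for the sharp cutoff --- Ogg's inequality alone suffices once the coprimality bookkeeping is done as you describe --- and in the paper it serves mainly to give the coarse conductor bound used elsewhere (e.g.\ in \Cref{modulardeg5prop}).
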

\begin{proof}
This was proved in \Cref{new_cond_quintic_cor1} because the curve $X_0(N)$ is not even pentaelliptic over $\Q$ for these levels $N$.
\end{proof}

If an elliptic curve $E/\Q$ has conductor $N$, then all possible degrees of rational morphisms from $X_0(N)$ to $E$ are multiples of the modular degree of $E$. Therefore, this case can be easily solved.

However, there were some curves $X_0(N)$ which cannot be eliminated using Ogg's method (\Cref{cor:ogg}) and for which there exists a positive rank elliptic curve $E$ with conductor $\textup{cond}(E)=M\mid N$, $M\neq N$. For these curves, we used the method from \cite{DerickxOrlic23} of determining all possible degrees of rational morphisms from $X_0(N)$ to $E$. More precisely, we used the following results:

\begin{prop}[{\cite[Proposition 1.10]{DerickxOrlic23}}]\label{quadraticformprop}
    Let $C$ be a curve over $\mathbb Q$ with at least one rational point and $E$ an elliptic curve over $\mathbb Q$ that occurs as an isogeny factor of $J(C)$ with multiplicity $n \geq 1$. Then the degree map $\deg: \textup{Hom}(C,E) \to \Z$ can be extended to a positive definite quadratic form on $\textup{Hom}(J(C),E) \cong \Z^n$.
\end{prop}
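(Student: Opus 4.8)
The plan is to realize both sides through Albanese/Jacobian functoriality and to build the quadratic form out of the canonical principal polarizations. First I would fix a rational point $P_0 \in C(\Q)$ and use the associated Abel--Jacobi embedding $\iota\colon C \to \Jac(C)$, $P \mapsto [P-P_0]$. By the universal property of $\Jac(C)$ as the Albanese variety of $(C,P_0)$, every pointed morphism $f\colon (C,P_0)\to (E,O)$ factors uniquely as $f = \psi\circ\iota$ with $\psi\colon \Jac(C)\to E$ a homomorphism of abelian varieties, and conversely $\psi = f_* = \operatorname{Alb}(f)$. Since an arbitrary morphism $C \to E$ differs from a pointed one by a translation, which does not change the degree, it suffices to work on $\operatorname{Hom}(\Jac(C),E)$ and to produce a form $q$ there with $q(\psi) = \deg(\psi\circ\iota)$. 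Because $E/\Q$ has $\End(E)=\Z$ (any complex multiplication is not defined over $\Q$), the group $\operatorname{Hom}(\Jac(C),E)$ is torsion-free of rank equal to the multiplicity $n$ of $E$ as an isogeny factor, giving the stated isomorphism with $\Z^n$.

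Next I would define the form. Writing $\theta\colon \Jac(C)\xrightarrow{\sim}\widehat{\Jac(C)}$ and $\lambda_E\colon E \xrightarrow{\sim}\hat E$ for the canonical principal polarizations, I attach to each $\psi$ its adjoint $\psi^\dagger := \theta^{-1}\circ\hat\psi\circ\lambda_E \in \operatorname{Hom}(E,\Jac(C))$ and set $q(\psi) := \psi\circ\psi^\dagger \in \End(E)=\Z$. Two facts then make $q$ a quadratic form. Additivity of dualizing and of composition with the fixed polarizations shows $\psi\mapsto\psi^\dagger$ is $\Z$-linear; and biduality together with the symmetry of $\theta$ and $\lambda_E$ shows that $\psi_1\circ\psi_2^\dagger$ and $\psi_2\circ\psi_1^\dagger$ are Rosati-adjoint in $\End(E)$, so, the Rosati involution being trivial on $\End(E)=\Z$, they coincide as integers. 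Hence $q(\psi_1+\psi_2)-q(\psi_1)-q(\psi_2) = 2\,\psi_1\circ\psi_2^\dagger$ is a symmetric $\Z$-bilinear pairing, and $q$ is a quadratic form.

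The key compatibility is that this abstract adjoint is exactly the Picard pullback: for $f = \psi\circ\iota$ one has $f_* = \psi$ and $f^* = \psi^\dagger$, the latter being the statement $f^* = \widehat{f_*}$ under the autoduality of the Jacobian. Granting this, the projection formula $f_*\circ f^* = [\deg f]$ on $E$ yields $q(\psi) = \psi\circ\psi^\dagger = f_*\circ f^* = \deg(\psi\circ\iota)$, so $q$ restricts to the degree map. Positive definiteness then comes for free: if $\psi\neq 0$ then $\psi\circ\iota$ is non-constant — otherwise $\psi$ would vanish on $\iota(C)$, which generates $\Jac(C)$, forcing $\psi=0$ — whence $q(\psi)=\deg(\psi\circ\iota)>0$.

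The hard part, and the step deserving the most care, is the duality identity $f^*=\widehat{f_*}$ relating the Picard pullback to the Albanese pushforward through the canonical polarizations; this is where the principal polarization of $\Jac(C)$ and the autoduality of abelian varieties enter, and it is what ultimately pins $q(\psi)$ to the honest geometric degree rather than to a merely abstract norm form. A secondary obstacle is the bookkeeping between morphisms $C\to E$, pointed morphisms, and homomorphisms $\Jac(C)\to E$, so that the phrase ``extends the degree map'' is read correctly and the translation ambiguity is absorbed; since in fact every $\psi$ arises as $(\psi\circ\iota)_*$, this identification is a bijection and the extension is an equality under it.
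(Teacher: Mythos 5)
Your proof is correct and takes essentially the same route as the source the paper cites for this statement (the paper states \cite[Proposition 1.10]{DerickxOrlic23} without reproving it, pointing to \cite[Definition 2.2]{DerickxOrlic23} for the form): there too the pairing is $\langle\psi_1,\psi_2\rangle=\psi_1\circ\psi_2^{\dagger}$ with the adjoint $\psi^{\dagger}=\theta^{-1}\circ\hat{\psi}\circ\lambda_E$ built from the canonical principal polarizations, the identity $f_*\circ f^*=[\deg f]$ identifies the form with the degree, and positive definiteness follows because a nonzero $\psi$ restricts to a non-constant map on the Abel--Jacobi image, which generates the Jacobian.
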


We will use 
\begin{align*}
\left<\,\_\,,\,\_\,\right> : \textup{Hom}(J(C),E)\times\textup{Hom}(J(C),E) & \to  \textup{End}(J(E)),
\end{align*} 
to denote the quadratic form from \Cref{quadraticformprop}. See \cite[Definition 2.2]{DerickxOrlic23} for more details on how this quadratic form is defined.

\begin{prop}[{Extension of \cite[Proposition 1.11]{DerickxOrlic23}}]\label{quadraticformcoefficients}
    Let $E$ be an elliptic curve of positive $\Q$-rank and conductor $\textup{Cond}(E)=M\mid N<778$, and let $f:X_0(M)\to E$ be a modular parametrization of $E$ of minimal degree. Then (with the natural embedding $X_0(N)\to J_0(N)$), the maps $f\circ \iota_{d,N,M}$ form a basis for $\textup{Hom}_\Q(J_0(N),E)$, where $d$ ranges over all divisors of $\frac{N}{M}$.
\end{prop}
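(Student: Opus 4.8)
The plan is to show that the $t := \#\{d : d\mid N/M\}$ maps $f\circ\iota_{d,N,M}$, one for each divisor $d$ of $N/M$, form a $\Z$-basis of the free abelian group $\textup{Hom}_\Q(J_0(N),E)$ in three steps: a rank computation, a linear independence check, and a saturation (index one) argument. First I would pin down the rank. By the Atkin--Lehner--Li theory of newforms, the $f$-isotypic part of $J_0(N)$ is isogenous to $E^{t}$, the copies being indexed by the oldforms $f(q^d)$ for $d\mid N/M$; this realizes the $M$-old part via the degeneracy maps in the style of Ribet. Since $E$ is an elliptic curve over $\Q$ we have $\End^0_\Q(E)=\Q$ (any CM is not rational), so $\textup{Hom}_\Q(J_0(N),E)\otimes\Q\cong\Q^{t}$. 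As there are exactly $t$ candidate maps, it suffices to prove that they are $\Q$-linearly independent and that they generate a saturated sublattice.

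For linear independence I would use the cotangent map $\phi\mapsto\phi^*\omega_E$, which injects $\textup{Hom}_\Q(J_0(N),E)\otimes\Q$ into $S_2(\Gamma_0(N))$. The degeneracy map $\iota_{d,N,M}$ pulls the N\'eron differential of $E$ back, through $f$, to a nonzero scalar multiple of the oldform $f(q^d)$; because the forms $f(q^d)$ for $d\mid N/M$ are linearly independent in $S_2(\Gamma_0(N))$, the maps $f\circ\iota_{d,N,M}$ are linearly independent, hence a $\Q$-basis and a full-rank sublattice $\Lambda_1\subseteq\Lambda:=\textup{Hom}_\Q(J_0(N),E)$.

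The crux, and the step I expect to be the main obstacle, is proving $[\Lambda:\Lambda_1]=1$. Because $f$ is the parametrization of minimal degree, i.e. the optimal (strong Weil) curve in its isogeny class, the induced map on Jacobians generates $\textup{Hom}_\Q(J_0(M),E)$ and $f^*\omega_E=\omega_f$ up to the Manin constant, which is known to be $1$ for optimal curves in this conductor range. The difficulty is that propagating this primitivity through the degeneracy maps to level $N$ can a priori introduce a nontrivial index: under the cotangent map $\Lambda_1$ surjects onto the lattice spanned by the $f(q^d)$, and comparing this with the saturation of $\Lambda$ inside the integral cusp forms reduces to showing that the relevant localization of the Hecke module $\textup{Hom}(J_0(N),E)$ is free with the $\iota_{d,N,M}$ as a basis, equivalently that no congruence between $f$ and another eigenform obstructs saturation.

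This last point is precisely where the hypothesis $N<778$ enters. For a positive-rank $E$ of conductor $M\mid N<778$ there are only finitely many isogeny classes to consider, so the index can be confirmed to equal $1$ in one of two ways: either uniformly, from the explicit integral structure of the oldform lattice together with the integrality of the $a_p(f)$ and the triviality of the Manin constant, or by directly computing the change-of-basis determinant between $\Lambda_1$ and the saturation of $\Lambda$ for each relevant pair $(E,N)$ in Magma. I would carry out the theoretical reduction in general and then discharge the remaining finite verification computationally, which is exactly the mechanism by which Proposition~1.11 of \cite{DerickxOrlic23} is extended to the larger range stated here.
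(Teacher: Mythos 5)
Your proposal is correct and takes essentially the same route as the paper: the paper's proof simply reruns the Sage verification underlying \cite[Proposition 1.11]{DerickxOrlic23} (a rank-and-independence framework whose saturation/index-one step is discharged computationally) on the enlarged range $N<778$, exactly the ``theoretical reduction plus finite computation'' mechanism you describe. The only detail you omit is that the code enumerates curves via odd analytic rank (Atkin--Lehner sign), so the twelve curves of conductor $<778$ with rank at least $2$ require the paper's separate one-line argument --- their conductor must equal $N$, forcing multiplicity one, whence minimality of $\deg f$ makes $f$ a generator --- but this case is subsumed in your proposed pair-by-pair verification in any event.
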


\begin{proof}
    In \cite[Proposition 1.11]{DerickxOrlic23}, this was proved using Sage for $N<408$. The same code works for $N<778$. The outcome of running this code is available in the MD Sage github repository at \begin{center}\url{https://github.com/koffie/mdsage/blob/v0.1.0/articles/derickx_orlic-quartic_X0/computational_proofs.ipynb}
    \end{center}

    The code deals with elliptic curves of odd analytic rank, i.e., those with Atkin-Lehner sign $\epsilon=1$ in the functional equation (\cite[Section 10.5.1]{stein07})\[\Lambda(E,s)=\epsilon\Lambda(E,2-s).\] There are exactly 12 elliptic curves over $\Q$ of conductor $N<778$ with rank at least $2$. Among these, the one with smallest conductor is  \lmfdbec{389}{a}{1}. In order to deal with these 12 cases, one can argue as follows. The only way to have $\textup{Cond}(E) | N < 778$ for one of these cases is when $\textup{Cond}(E)=N$, so $E$ occurs with multiplicity $1$ as an isogeny factor. In particular, injectivity follows from the minimality assumption on the degree of $f:X_0(M)\to E$.
\end{proof}

In this article, we only need \Cref{quadraticformcoefficients} for $N<468$. The reason to stop at $N<778$ is that for $N=778$, the code would need to be modified to also account for the rank 2 elliptic curve \lmfdbec{389}{a}{1}.

In \cite[Theorem 2.13]{DerickxOrlic23}, there is an explicit formula for the pairing $\left<f\circ\iota_{d_1,N,M},f\circ\iota_{d_2,N,M}\right>$ whenever $\frac{N}{M}$ is either squarefree or coprime to $M$. Even if these assumptions are not satisfied, we can prove that the coefficients are divisible by $\deg f$.

\begin{prop}\label{coefficient_divisible_by_degf}
    Let $E$ be an elliptic curve of conductor $M$ and let $f:X_0(M)\to E$ be the modular parametrization of $E$. Assume that $M\mid N$ and let $d_1$ and $d_2$ be divisors of $\frac{N}{M}$. Then the integer coefficient
    $$\left<f\circ\iota_{d_1,N,M},f\circ\iota_{d_2,N,M}\right>$$ is divisible by $\deg f$.
\end{prop}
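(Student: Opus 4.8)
The plan is to reduce the pairing to a single structural identity and then pull a factor of $\deg f$ out of it. I write $f_*\colon J_0(M)\to E$ and $f^*\colon E\to J_0(M)$ for the maps induced on Jacobians by the modular parametrization $f$; they satisfy $f_*f^*=[\deg f]$ on $E$, and they are adjoint to one another, $\hat{f_*}=f^*$, with respect to the canonical principal polarizations of $J_0(M)$ and $E$. The degeneracy maps $\iota_{d,N,M}\colon J_0(N)\to J_0(M)$ have Rosati adjoints $\hat\iota_{d,N,M}\colon J_0(M)\to J_0(N)$. Throughout I interpret $f\circ\iota_{d,N,M}$ as $f_*\circ\iota_{d,N,M}\in \End$-theoretic terms, i.e. as an element of $\textup{Hom}_\Q(J_0(N),E)$.

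First I would compute the adjoint of $\phi:=f\circ\iota_{d_1,N,M}$. Since adjunction reverses composition, $\hat\phi=\hat\iota_{d_1,N,M}\circ f^*$, and similarly for $\psi:=f\circ\iota_{d_2,N,M}$. Unwinding the definition of the bilinear form associated to the degree quadratic form of \Cref{quadraticformprop} (see \cite[Definition 2.2]{DerickxOrlic23}), one obtains, up to the normalization used there, $\langle\phi,\psi\rangle=\phi\hat\psi+\psi\hat\phi=f_*\,(T+\hat T)\,f^*$, where $T:=\iota_{d_1,N,M}\,\hat\iota_{d_2,N,M}\in\End(J_0(M))$ and $\hat T=\iota_{d_2,N,M}\,\hat\iota_{d_1,N,M}$ is its Rosati adjoint.

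The key step is to show that $T$ stabilizes the abelian subvariety $A\subseteq J_0(M)$ isogenous to $E$. Because $E$ has conductor exactly $M$, its newform $g$ has level precisely $M$ and rational Hecke eigenvalues $a_n\in\Z$, so $A$ is realized (up to isogeny) as the connected component of $\bigcap_{\gcd(n,N)=1}\ker(T_n-a_n)$ in $J_0(M)$, and it occurs there with multiplicity one by strong multiplicity one. The degeneracy maps and their adjoints are equivariant for the Hecke operators $T_n$ with $\gcd(n,N)=1$ (which are self-adjoint at $\Gamma_0$-level), hence $T$ commutes with all such $T_n$ and therefore preserves each of these kernels and so stabilizes $A$. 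Writing $\tau:=T|_A\in\End(A)$, and using that $f^*$ factors through $A$ while $f_*$ annihilates every complementary isotypic factor, this gives $\langle\phi,\psi\rangle=f_*\,(\tau+\hat\tau)\,f^*$, where $\hat\tau$ is the adjoint of $\tau$ inside $\End(A)$.

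Finally, since $A$ is one-dimensional, $\tau+\hat\tau=\operatorname{Tr}(\tau)$ is a rational integer acting as a central scalar; it commutes past $f_*$ and $f^*$, yielding $\langle\phi,\psi\rangle=\operatorname{Tr}(\tau)\,f_*f^*=\operatorname{Tr}(\tau)\cdot[\deg f]$, which is divisible by $\deg f$. The main obstacle is the middle step: one must show that $T$ stabilizes $A$ \emph{integrally}, not merely up to isogeny, because this is exactly what forces $\operatorname{Tr}(\tau)$ to be an honest integer rather than a rational number with denominator dividing $\deg f$. This is also the point at which the hypotheses on $N/M$ appearing in the explicit formula \cite[Theorem 2.13]{DerickxOrlic23} are bypassed, since the argument relies only on Hecke-equivariance away from $N$ together with strong multiplicity one, and never on the arithmetic of $N/M$.
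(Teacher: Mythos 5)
Your proof is correct and takes essentially the same route as the paper: the paper's one-line proof cites the start of the proof of \cite[Theorem 2.13]{DerickxOrlic23}, which is exactly your adjunction identity $\left<f\circ\iota_{d_1,N,M},f\circ\iota_{d_2,N,M}\right>=f_*\circ\left(\iota_{d_1*}\iota_{d_2}^*+\iota_{d_2*}\iota_{d_1}^*\right)\circ f^*$ followed by pulling out $f_*f^*=[\deg f]$. Your middle step --- Hecke-equivariance of the degeneracy maps away from $N$ plus strong multiplicity one, forcing $T=\iota_{d_1*}\iota_{d_2}^*$ to stabilize the copy of $E$ in $J_0(M)$ integrally and hence act by an integer --- is precisely the content the paper outsources to that citation, so the two arguments coincide.
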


\begin{proof}
    We can prove that $$\left<f\circ\iota_{d_1,N,M},f\circ\iota_{d_2,N,M}\right>=[\deg f]\circ\left<\iota_{d_1,N,M},\iota_{d_2,N,M}\right>,$$ see \cite[Start of the proof of Theorem 2.13]{DerickxOrlic23} for more details.
\end{proof}

The following result collects some data that is already available in the LMFDB \cite{lmfdb} and is used in the proof of \Cref{deg5toellcurve_prop}.
\begin{lem}\label{modulardeg5prop}
    The elliptic curves with modular degree $5$ are exactly those with LMFDB label \lmfdbec{11}{a}{1}, \lmfdbec{11}{a}{3}, \lmfdbec{46}{a}{2}, \lmfdbec{67}{a}{1}, and \lmfdbec{89}{b}{2}. All the aforementioned elliptic curves have Mordell-Weil rank 0.
\end{lem}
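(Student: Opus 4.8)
The plan is to prove the two assertions separately: first the completeness of the list (these five curves are the \emph{only} ones of modular degree $5$), and then the rank computation, which is a direct verification. The crux is the completeness statement, and the strategy there is to reduce to a finite computation by bounding the conductor of any elliptic curve of modular degree $5$, after which one appeals to the modular degree data tabulated in the LMFDB \cite{lmfdb}.

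To bound the conductor, suppose $E/\Q$ has conductor $N$ and modular degree $5$. Then there is a modular parametrization $f\colon X_0(N)\to E$ of degree $5$, so \Cref{cor:ogg} applies with $d=5$. Taking $p$ to be the least prime not dividing $N$, the inequality there rearranges to
\[
\psi(N)\;\leq\;\frac{12}{p-1}\bigl(5(p+1)^2-2^{\omega(N)}\bigr)\;\leq\;\frac{60(p+1)^2}{p-1}.
\]
Since $\psi(N)\geq N$ always holds, and since a large least non-divisor $p$ forces $N$ to be divisible by all primes below $p$ (hence forces $\psi(N)$ to grow like a primorial, far faster than the right-hand side), one checks directly that $p\leq 11$ and that the displayed bound forces $N\leq 640$. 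For instance, when $N$ is odd one uses $p=2$ to get $\psi(N)\leq 540$; the worst case $p=7$ (requiring $30\mid N$, $7\nmid N$) gives $\psi(N)\leq 640$; and $p=11$ only survives for $N=210$. Thus every elliptic curve of modular degree $5$ has conductor at most $640$. (A cruder but self-contained alternative is to compose $f$ with a degree-$2$ map $E\to\PP^1$ to get $\gon_\C X_0(N)\leq 10$ and invoke \Cref{abramovichbound}, which also yields an explicit conductor bound.)

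It then remains to enumerate all elliptic curves over $\Q$ of conductor $N\leq 640$ and to read off their modular degrees from the LMFDB, which tabulates this invariant throughout this range. One finds that exactly \lmfdbec{11}{a}{1}, \lmfdbec{11}{a}{3}, \lmfdbec{46}{a}{2}, \lmfdbec{67}{a}{1}, and \lmfdbec{89}{b}{2} have modular degree $5$, establishing completeness. Finally, the Mordell--Weil ranks of these five curves are each recorded as $0$ in the LMFDB, which gives the last claim. The only genuine obstacle is ensuring that the finiteness bound is correct and that the database enumeration is complete over the whole range $N\leq 640$; there is no deep theoretical difficulty, as modular degree and rank are computable invariants and the decisive step is the conductor bound supplied by \Cref{cor:ogg}.
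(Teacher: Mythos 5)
Your proposal is correct, and it shares the paper's overall skeleton: bound the conductor of any elliptic curve of modular degree $5$, then reduce to a finite lookup in the LMFDB (whose coverage, all conductors $\leq 500{,}000$, far exceeds either bound), finishing with the rank-$0$ claim as a direct database verification. Where you differ is in how the conductor bound is obtained. The paper simply cites \Cref{lem:pentaelliptc_bound} to get conductor at most $1007$, a bound whose content is exactly your parenthetical ``cruder alternative'': a degree-$5$ map to an elliptic curve gives $\gon_\C X_0(N)\leq 10$, so \Cref{abramovichbound} yields $\psi(N)\leq \frac{2^{15}}{325}\cdot 10<1009$. Your primary route instead runs \Cref{cor:ogg} with the least prime $p\nmid N$, and your case analysis checks out: $p\leq 11$ is forced (for $p=13$ one would need $2310\mid N$, making $\psi(N)$ exceed the bound $60\cdot 14^2/12=980$), the worst case $p=7$ gives $\psi(N)\leq 640$ and hence $N\leq\psi(N)\leq 640$, and $p=11$ survives only for $N=210$. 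This gives a sharper bound ($640$ versus $1007$) and has a small structural advantage: \Cref{lem:pentaelliptc_bound} is nominally a statement about \emph{positive rank} pentaelliptic curves, and since the five curves in question have rank $0$, the paper implicitly relies on the fact that its proof rules out pentaellipticity irrespective of rank; your Ogg-based bound requires no rank hypothesis, so this subtlety never arises. The two approaches buy essentially the same thing — a finite, database-checkable range — with yours marginally more self-contained and the paper's marginally shorter, as it reuses a lemma already established for \Cref{thm:pos_rank_penta}.
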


\begin{proof}
   By \Cref{lem:pentaelliptc_bound}, any elliptic curve of modular degree $5$ has to have conductor at most $1007$. The LMFDB contains all elliptic curves over $\Q$ of conductor at most 500,000. So the full list can be obtained by querying the LMFDB API for all elliptic curves of modular degree 5. The results of this query can be found at 
    \begin{center}\url{https://www.lmfdb.org/api/ec_curvedata?degree=5} \,.\end{center}
\end{proof}

\begin{prop}\label{deg5toellcurve_prop}
    If $N < 468$, then $X_0(N)$ is not positive rank pentaelliptic over $\Q$.
\end{prop}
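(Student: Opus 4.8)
The plan is to prove that $X_0(N)$ is not positive rank pentaelliptic for $N<468$ by reducing to a finite explicit check over the positive rank elliptic curves $E$ whose conductor $M$ divides $N$. The starting observation is that a pentaelliptic map $X_0(N)\to E$ forces $E$ to be an isogeny factor of $J_0(N)$, and by the theory of newforms this means $\textup{Cond}(E)=M\mid N$. So for each $N<468$ I would enumerate the (finitely many) positive rank elliptic curves $E/\Q$ with $M\mid N$, and for each such pair $(N,E)$ decide whether a degree $5$ map $X_0(N)\to E$ can exist.

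The key tool is the positive definite quadratic form $\deg\colon\textup{Hom}(J_0(N),E)\to\Z$ from \Cref{quadraticformprop}, together with the explicit basis provided by \Cref{quadraticformcoefficients}: for a minimal modular parametrization $f\colon X_0(M)\to E$, the maps $f\circ\iota_{d,N,M}$ with $d\mid \frac{N}{M}$ form a basis of $\textup{Hom}_\Q(J_0(N),E)\cong\Z^n$, where $n=\#\{d:d\mid N/M\}$. First I would compute the Gram matrix of this form in the given basis; by \Cref{coefficient_divisible_by_degf} every entry is divisible by $\deg f$, so one may factor out $\deg f$ and study the reduced integral form $\left<\iota_{d_1,N,M},\iota_{d_2,N,M}\right>$. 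A degree $5$ map $X_0(N)\to E$ corresponds to a vector $v\in\Z^n$ with $\langle v,v\rangle=5$ under the full form, i.e. $\deg f\cdot(\text{reduced value})=5$. Since $\deg f\geq 1$ is the modular degree of $E$, this equation already rules out every $E$ whose modular degree exceeds $5$ or does not divide $5$; the only surviving elliptic curves are those listed in \Cref{modulardeg5prop} with modular degree exactly $1$ or $5$, all of which have Mordell--Weil rank $0$.

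Concretely, I would argue in two cases. If $\deg f\nmid 5$ (equivalently, if $E$ has modular degree $\geq 2$ and $\neq 5$), then $\langle v,v\rangle=5$ is impossible since the left side is a multiple of $\deg f$, so no pentaelliptic map to $E$ exists. If $\deg f\in\{1,5\}$, then by \Cref{modulardeg5prop} such $E$ have rank $0$, so even if a degree $5$ map existed it would not be a \emph{positive rank} pentaelliptic map; but more to the point, the only positive rank candidates $E$ with $M\mid N$ all have modular degree $\geq 2$ and $\neq 5$ (one must verify there is no positive rank curve of modular degree dividing $5$), so the first case applies and eliminates them. Thus for every $N<468$ and every positive rank $E$ with $\textup{Cond}(E)\mid N$, no degree $5$ map $X_0(N)\to E$ exists, and $X_0(N)$ is not positive rank pentaelliptic.

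The main obstacle, and where the real computational content sits, is assembling and verifying the finite data: one must enumerate all positive rank elliptic curves of conductor $M\leq 467$, match them to the relevant levels $N$ (those $N<468$ with $M\mid N$ that survive the gonality and Ogg-method filters from the earlier lemmas), and for each compute the modular degree $\deg f$ to confirm it neither equals nor divides $5$ except in the harmless rank-$0$ cases of \Cref{modulardeg5prop}. A subtlety to handle carefully is that $\frac{N}{M}$ need not be squarefree or coprime to $M$, so the clean closed-form pairing formula of \cite[Theorem 2.13]{DerickxOrlic23} may not apply; here I would lean on \Cref{coefficient_divisible_by_degf}, whose divisibility conclusion holds unconditionally, so that the divisibility-by-$\deg f$ obstruction remains valid for all such $N$. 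The rank-$2$ curves flagged in \Cref{quadraticformcoefficients} do not intrude here since their conductors exceed the range or occur only with multiplicity one, which is exactly the case the basis result already covers.
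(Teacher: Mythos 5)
Your proposal is correct and follows essentially the same route as the paper: conductor divisibility $\textup{Cond}(E)=M\mid N$, the quadratic form of \Cref{quadraticformprop} with the basis of \Cref{quadraticformcoefficients}, divisibility of all Gram coefficients by $\deg f$ via \Cref{coefficient_divisible_by_degf}, hence $\deg f \mid 5$, and rank-$0$ conclusions in the surviving cases. The one slip is that \Cref{modulardeg5prop} covers only modular degree exactly $5$; for $\deg f = 1$ the paper instead observes that $E = X_0(M)$ and invokes Ogg's result that all genus-one curves $X_0(M)$ have $\Q$-rank $0$, which is precisely the verification you flag as needed but leave unjustified.
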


\begin{proof}
    Suppose that for some $N<468$, the curve $X_0(N)$ admits a degree $5$ rational morphism to a positive rank elliptic curve $E$. Then we must have $\textup{cond}(E)=M\mid N$. By \Cref{quadraticformprop}, all possible degrees of a rational morphism from $X_0(N)$ to $E$ are given by a positive definite quadratic form in $n$ variables, where $n$ is the number of positive divisors of $\frac{N}{M}$.

    By \Cref{quadraticformcoefficients}, the coefficients of this quadratic form are given by $$\left<f\circ\iota_{d_1,N,M},f\circ\iota_{d_2,N,M}\right>$$ and they are all divisible by $\deg f$ by \Cref{coefficient_divisible_by_degf}. Therefore, in order for this quadratic form to attain the value $5$, we must have $\deg f\mid 5$.

    If $\deg f=1$, then $X_0(M)=E$ is an elliptic curve. However, all curves $X_0(M)$ with genus $1$ have $\Q$-rank $0$ \cite[p. 449]{Ogg74}. Therefore, this case is impossible.

    If $\deg f=5$, then \Cref{modulardeg5prop} tells us that there are exactly $5$ such elliptic curves, and all of them have $\Q$-rank $0$, giving us a contradiction in this case as well.
\end{proof}

\section{Modular curves of minimum density degree 5}\label{AVtranslates_section}
This is the last section that contains results needed to prove \Cref{densitydeg5thm}. 
We use several different techniques in order to deal with the $17$ cases that still needed consideration after \Cref{new_cond_quintic_cor1}.

We begin by outlining the overall computational strategy used in this section,
and by indicating which technique provides the decisive input for each remaining
level~$N$.

\begin{itemize}
  \item \emph{Jacobian rank computations.}
  For levels~$N$ such that the Jacobian $J_0(N)$ has rank~$0$ over~$\Q$,
  finiteness of degree~$5$ points follows directly
  (Proposition~\ref{jacobian_rank0_prop}).

  \item \emph{Dimension bounds for $W^0_5$.}
  When $J_0(N)$ has positive rank, bounds on the dimension of abelian
  subvarieties of $W^0_5(X_0(N))$ are used to exclude the existence of
  positive-rank translates
  (Corollary~\ref{no_AV_of_dim_12}).

  \item \emph{Debarre--Fahlaoui curves.}
  In certain cases, the only remaining theoretical possibility would be that
  $X_0(N)$ arises from a Debarre--Fahlaoui curve; this possibility is ruled out
  using the structure of such curves
  (Proposition~\ref{DFcurves}).

  \item \emph{Translates of abelian varieties and specialization.}
  For the exceptional levels $N=112$ and $N=117$, we combine specialization results
  for translates of abelian varieties with explicit computations over finite
  fields to exclude the existence of positive-rank translates in $W^0_5$
  (Proposition~\ref{prop112117}).
\end{itemize}

We note that the first 3 methods only involve gathering facts on the isogeny decomposition of $J_0(N)$ from the LMFDB combined with a theoretical argument. The last method, Translates of abelian varieties and specialization, is different in nature and involves extensive computer computations.

For the reader's convenience, Table~\ref{tab:section4-summary} summarizes the
technique that provides the decisive argument for each level~$N$ appearing in
Section~\ref{AVtranslates_section}.

\begin{table}[h]
\centering
\begin{tabular}{|l|l|}
\hline
\multicolumn{1}{|c|}{Decisive method} & 
\multicolumn{1}{c|}{Level $N$} \\ \hline

Jacobian rank computation 
& $76,84,90,105,108,110,120,126,$ \\
& $132,140,144,150,168,180$ \\ \hline

Dimension bounds for $W^0_5$
& $97,113,127,137,139,169$ \\ \hline

Debarre--Fahlaoui curves
& $93,115,133,147$ \\ \hline

Translates of abelian varieties
& $112,117$ \\ 
and specialization & \\ \hline 
\end{tabular}
\bigskip
\caption{Decisive computational methods used in Section~4.}
\label{tab:section4-summary}
\end{table}

\subsection{Jacobian rank and dimension bounds}\label{subsec:jacobian-dimension}

We first treat those levels $N$ for which the finiteness of degree~$5$ points can
be established using information on the Jacobian $J_0(N)$.
This includes the cases where $J_0(N)$ has rank~$0$ over~$\Q$, as well as the cases
where the dimension bounds on abelian subvarieties of $W^0_5(X_0(N))$ exclude the
existence of positive-rank translates.

\begin{prop}\label{jacobian_rank0_prop}
    The modular curve $X_0(N)$ has only finitely many degree $5$ points for \[N\in\{76,84,90,105,108,110,120,126,132,140,144,150,168,180\}.\]
\end{prop}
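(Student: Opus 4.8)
The plan is to apply \Cref{translate_of_abelian_variety_thm} with $d=5$: for each listed level, $X_0(N)$ has infinitely many points of degree exactly $5$ over $\Q$ if and only if either (1) there is a degree $5$ morphism $X_0(N)\to\PP^1$ over $\Q$, or (2) there is a degree $5$ point $x$ with Galois conjugates $x_1,\dots,x_5$ and a positive rank abelian subvariety $A\subseteq\Pic^0(X_0(N))=J_0(N)$ with $\phi_5(x_1+\cdots+x_5)+A\subseteq W^0_5 X_0(N)$. Since $X_0(N)$ always has a rational cusp it has a rational point and its Picard scheme is representable, so the theorem applies. The strategy is to show that neither alternative can occur for the fourteen levels in question, and the single feature that drives both exclusions is that $J_0(N)$ has Mordell--Weil rank $0$ over $\Q$.

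To rule out alternative (1), I would quote the gonality classification of \cite[Theorem 1.1]{NajmanOrlic22}, which determines all $X_0(N)$ of $\Q$-gonality at most $6$: the unique level with $\gon_\Q X_0(N)=5$ is $N=109$, and none of the fourteen listed levels has $\Q$-gonality $\leq 4$ (consistently, none of them appears in \Cref{thmquadratic,thmcubic,quarticthm}, since a curve of gonality $\leq 4$ would have infinitely many points of degree $\leq 4$). Hence $\gon_\Q X_0(N)\geq 6$ for each listed $N$, so there is no degree $5$ morphism $X_0(N)\to\PP^1$ over $\Q$ and alternative (1) cannot occur.

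To rule out alternative (2), it suffices to prove that $J_0(N)(\Q)$ has rank $0$: then every abelian subvariety $A\subseteq J_0(N)$ satisfies $A(\Q)\hookrightarrow J_0(N)(\Q)$ and so has rank $0$ as well, leaving no positive rank $A$ available. I would establish rank $0$ by decomposing $J_0(N)$ up to isogeny into the modular abelian varieties $A_f$ attached to the weight $2$ newforms $f$ of level dividing $N$, and verifying $L(f,1)\neq 0$ for each; by the theorem of Kolyvagin--Logachev this nonvanishing gives $\operatorname{rank} A_f(\Q)=0$, and hence $\operatorname{rank} J_0(N)(\Q)=0$. The relevant $L$-value data are recorded in the LMFDB \cite{lmfdb} and can be reconfirmed in Magma \cite{magma}.

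With both alternatives of \Cref{translate_of_abelian_variety_thm} excluded, one obtains $\#X_0(N)_5'(\Q)<\infty$ for each listed level, as claimed. I expect the rank computation to be the only real obstacle: for the highly composite levels $120$, $144$, $168$, and $180$ the Jacobian splits into many isogeny factors, and the analytic rank of each must be checked, whereas the gonality input is imported verbatim from \cite{NajmanOrlic22}. It is worth noting that this rank $0$ argument is precisely what covers the small-genus levels $76,84,90,108$, where $g(X_0(N))<12$ and so the Kadets--Vogt reduction behind \Cref{new_cond_quintic_cor1} is unavailable; this is the reason these levels are grouped together in the present proposition rather than being dispatched earlier.
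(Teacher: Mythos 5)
Your proposal is correct and follows essentially the same route as the paper: rule out a degree $5$ map to $\PP^1$ via the gonality bound $\gon_\Q X_0(N)\geq 6$ from \cite[Theorem 1.1]{NajmanOrlic22}, and rule out case (2) of \Cref{translate_of_abelian_variety_thm} because $J_0(N)$ has Mordell--Weil rank $0$ over $\Q$. The only difference is one of sourcing: the paper simply cites \cite[Theorem 3.1 (1)]{Deg3Class} for the rank $0$ statement, whereas you sketch how to re-derive it via the decomposition of $J_0(N)$ into factors $A_f$ and Kolyvagin--Logachev applied to the nonvanishing of $L(f,1)$, which is the standard argument underlying that citation.
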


\begin{proof}
    In all these cases, the Jacobian $J_0(N)$ has rank $0$ over $\Q$ by \cite[Theorem 3.1 (1)]{Deg3Class}. Therefore, case $(2)$ of \Cref{translate_of_abelian_variety_thm} cannot happen. Hence, it suffices to prove that there are no degree $5$ rational morphisms $X_0(N)\to\PP^1$. Indeed, these curves $X_0(N)$ have $\Q$-gonality at least $6$ by \cite[Theorem 1.1]{NajmanOrlic22} and we are done.
\end{proof}

\begin{prop}[{\cite[Proposition 3.6]{DebarreFahlaoui}}]\label{DF_AV_dim}
    Let $C$ be a curve over $\C$ of genus $g$. Let $W_d^r(C)$ be as in \Cref{eq:Wdr_over_C}. Assume that $W_d^r(C)$ contains an abelian variety $A$ and that $d\leq g-1+r$. Then $\dim A\leq d/2-r$.
\end{prop}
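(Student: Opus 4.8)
The plan is to pass to an infinitesimal (tangent-space) analysis at a general point of $A$ and to extract the factor $\tfrac12$ from the \emph{translation invariance} of that tangent space. First I would reduce to a convenient normal form. Since $A$ sits inside the torsor $\Pic^d(C)$, it is a coset $A = L_0 + B$ of an abelian subvariety $B \subseteq \Pic^0(C)$, with $\dim B = \dim A =: a$. Shrinking $A$ to a dense open set, I may assume a general $L \in A$ is a smooth point of $W_d^r(C)$ at which $h^0(L) = r+1$ exactly; by Riemann--Roch such an $L$ has $h^1(L) = h^0(K-L) = g - d + r$, and the hypothesis $d \le g-1+r$ forces this to be $\ge 1$, so $K - L$ is effective. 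If $d \ge g$ one may first apply the Serre-duality isomorphism $L \mapsto K - L$, which carries $W_d^r$ into $W_{2g-2-d}^{\,g-d+r-1}$ and leaves both the hypothesis and the quantity $d/2 - r$ unchanged; so there is no loss in assuming $d \le g-1$.

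Next I would invoke the determinantal description of the tangent space. The variety $W_d^r(C)$ of \Cref{eq:Wdr_over_C} is a determinantal locus in $\Pic^d(C)$, and by the Riemann--Kempf singularity theorem its Zariski tangent space at a point $L$ with $h^0(L) = r+1$ is the annihilator
$$T_L W_d^r(C) = \bigl(\operatorname{Im}\mu_0\bigr)^{\perp} \subseteq H^1(C,\OO_C) = H^0(C,K)^{\vee} = T_L \Pic^d(C),$$
where $\mu_0 \colon H^0(L) \otimes H^0(K-L) \to H^0(K)$ is the cup-product (Petri) map. Since $A \subseteq W_d^r(C)$, this yields $T_L A \subseteq (\operatorname{Im}\mu_0)^{\perp}$ for every general $L$. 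Because $A = L_0 + B$ is a coset of the subgroup $B$, transporting tangent spaces to the origin identifies $T_L A$ with a single fixed subspace $\mathfrak b \subseteq H^0(K)^{\vee}$ of dimension $a$, independent of $L$. Hence the image of $\mu_0$ lands, for \emph{every} $L \in A$, in the one fixed subspace $V_0 := \mathfrak b^{\perp} \subseteq H^0(K)$ with $\dim V_0 = g - a$; equivalently, every canonical divisor containing an effective $D \in |L|$ already lies in $V_0$, simultaneously for all $L \in A$ and all $D \in |L|$.

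The main obstacle is the final, genuinely geometric step: converting this fixed-subspace condition into the sharp bound $a \le d/2 - r$. Used at a single point, the inequality $\dim\operatorname{Im}\mu_0 \ge h^0(L) + h^0(K-L) - 1 = g - d + 2r$ only gives the weaker $a \le d - 2r$; the factor of two that improves this to $d/2 - r$ must come from using the same $V_0$ for the entire $a$-dimensional family. I would extract it by projecting the canonical curve $\phi_K(C) \subseteq \PP^{g-1}$ from the center $\PP(\mathfrak b)$ cut out by $V_0$: the divisors $D \in |L|$ sweep out only an $(a+r)$-dimensional family, yet all of them are forced into $V_0$, which makes the projected points collapse and produces a morphism of degree $\ge 2$ from $C$ onto its image. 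A Clifford/Castelnuovo-type count for this cover then gives $d \ge 2(a+r)$, i.e. $a \le d/2 - r$, with equality exactly for double covers.

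This expected extremal behaviour is a useful sanity check: the bound is attained by $A = f^{*}\Pic^{a+r}(C')$ arising from a degree-$2$ map $f \colon C \to C'$ onto a curve of genus $a$, where every degree-$(a+r)$ bundle on $C'$ has $h^0 \ge r+1$ by Riemann--Roch on $C'$, and $d = 2(a+r)$. Making precise the collapse-and-count step — showing that the single shared subspace $V_0$ forces a degree-$\ge 2$ structure rather than merely the formal determinantal bound $a \le d - 2r$ — is the heart of the argument, whereas the tangent-space bookkeeping and the Serre-duality normalization are routine.
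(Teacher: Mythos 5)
Your preparatory steps are sound and standard: the residuation normalization, the tangent-space description $T_L W_d^r(C) = (\operatorname{Im}\mu_0)^{\perp}$ at points with $h^0(L)=r+1$ exactly (after enlarging $r$ to be maximal for $A$, which only strengthens the claim), the translation-invariance of $T_LA$ producing a single subspace $V_0=\mathfrak{b}^{\perp}$ of dimension $g-a$ containing $\operatorname{Im}\mu_0$ for every $L\in A$, and the Hopf-lemma count giving $a\le d-2r$. But everything past that point is a conjecture rather than a proof, and you say so yourself: the entire factor of two rests on the unproven assertion that the common center $\PP(\mathfrak{b})$ forces the projection of the canonical curve to collapse into a degree $\ge 2$ cover, and that a ``Clifford/Castelnuovo-type count'' for that cover yields $d\ge 2(a+r)$. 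Neither half is routine, and the second is doubtful as a general mechanism. Nothing in your setup excludes the projection being birational onto its image; and even granting a nontrivial collapse, a degree-$m$ map with $m\ge 2$ does not by itself give $d\ge 2(a+r)$. Consider $A=f^{*}\Pic^{a+r}(C')$ for a degree-$3$ cover $f\colon C\to C'$ of a genus-$a$ curve: here $d=3(a+r)$, the proposition holds with strict inequality, and $C$ need admit no degree-$2$ map to any curve of positive genus, so the double-cover picture you invoke simply is not present. Your extremal sanity check only confirms the equality case, which is exactly where a $2\colon\!1$ structure genuinely exists (that is a separate structure theorem); the inequality itself must be proved uniformly in non-extremal situations where no such structure is available, so an argument that first manufactures a degree-$2$ cover and then counts cannot be correct as stated.

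For the comparison you were asked about: the paper contains no proof at all of this proposition --- it is imported verbatim, with citation, from Debarre--Fahlaoui, so the relevant benchmark is their Proposition 3.6. Their argument gains the factor of two along different lines: not from a cover of $C$, but from the group structure of $A$ itself. Since $A$ is a coset of an abelian subvariety $B\subseteq\Pic^0(C)$, a fixed bundle $S\in A+A$ admits an $a$-dimensional family of decompositions $S=(L+b)\otimes(M-b)$ with $b\in B$, and the corresponding subspaces $\mu\bigl(H^0(L+b)\otimes H^0(M-b)\bigr)\subseteq H^0(S)$ vary nontrivially with $b$; this forces $h^0$ to grow beyond the single-point Hopf bound, and combining such Pontryagin-type sums with Clifford-type estimates --- valid precisely in the special range guaranteed by the hypothesis $d\le g-1+r$ --- delivers $\dim A\le d/2-r$. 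In other words, the global input that improves $d-2r$ to $d/2-r$ is the $a$-dimensional family of decompositions inside the abelian variety, not a geometric collapse of the canonical model; your sketch identifies correctly where the difficulty sits but supplies a mechanism that fails in non-extremal cases, so the proposal has a genuine gap at its decisive step.
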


\begin{cor}\label{no_AV_of_dim_12}
    The modular curve $X_0(N)$ has only finitely many degree $5$ points for \[N\in\{97,113,127,137,139,169\}.\]
\end{cor}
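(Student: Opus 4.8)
The plan is to invoke the dichotomy of \Cref{translate_of_abelian_variety_thm} and rule out both of its alternatives, using the Debarre--Fahlaoui dimension bound (\Cref{DF_AV_dim}) to control the second one. First I would dispose of alternative $(1)$: for each $N\in\{97,113,127,137,139,169\}$ the curve $X_0(N)$ has $\Q$-gonality at least $6$ by \cite[Theorem 1.1]{NajmanOrlic22}, since $N=109$ is the unique level with $\Q$-gonality exactly $5$. Hence no degree-$5$ morphism $X_0(N)\to\PP^1$ exists over $\Q$, and alternative $(1)$ cannot occur.

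Next I would treat alternative $(2)$ via a genus bound. A direct genus computation gives $g(X_0(N))=7,9,10,11,11,8$ for $N=97,113,127,137,139,169$ respectively, so in every case $g\geq 6$. Suppose for contradiction that $\#X_0(N)_5'(\Q)=\infty$; then alternative $(2)$ produces a degree-$5$ point $x$ and a positive-rank abelian subvariety $A\subseteq \Pic^0(X_0(N))=J_0(N)$ with $\phi_5(x_1+\cdots+x_5)+A\subseteq W_5^0 X_0(N)$. After base change to $\C$ this exhibits a translate of $A_{\C}$ inside $W_5^0(X_0(N)_{\C})$, so I would apply \Cref{DF_AV_dim} with $d=5$ and $r=0$; the hypothesis $5\leq g-1$ holds because $g\geq 6$, and the conclusion $\dim A\leq 5/2$ forces $\dim A\leq 2$.

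It then remains to show that $J_0(N)$ has no positive-rank abelian subvariety of dimension $1$ or $2$. By Poincar\'e reducibility any such $A$ is isogenous to a product of simple isogeny factors of $J_0(N)$, each of dimension at most $2$, and $A$ has positive rank exactly when one of these factors does; so I would reduce to verifying that every simple isogeny factor of $J_0(N)$ of dimension $1$ or $2$ has Mordell--Weil rank $0$. For these prime and prime-power levels the newform decomposition of $J_0(N)$ is explicit, so one reads off the analytic rank of each small-dimensional factor (from the LMFDB or by a direct modular symbols computation), and analytic rank $0$ forces algebraic rank $0$ by Gross--Zagier--Kolyvagin. The main obstacle is precisely this last verification: one must confirm that every positive-rank simple factor of $J_0(N)$ has dimension at least $3$, so that the bound $\dim A\leq 2$ is incompatible with $A$ carrying positive rank. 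This contradiction eliminates alternative $(2)$, and together with the gonality argument it shows $\#X_0(N)_5'(\Q)<\infty$ for all six levels.
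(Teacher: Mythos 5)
Your proposal is correct and follows essentially the same route as the paper: rule out a degree~$5$ map to $\PP^1$ via the $\Q$-gonality bound of \cite[Theorem 1.1]{NajmanOrlic22}, use \Cref{translate_of_abelian_variety_thm} together with \Cref{DF_AV_dim} (valid since $g\geq 6$) to force $\dim A\leq 2$, and then check via the LMFDB newform decomposition that every simple factor of $J_0(N)$ of dimension $\leq 2$ has analytic rank $0$, hence algebraic rank $0$. Your added details (the explicit genus values, the base change to $\C$ before applying \Cref{DF_AV_dim}, and the Poincar\'e-reducibility reduction to simple factors) are all consistent with, and merely make explicit, what the paper's proof does.
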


\begin{proof}
    In all these cases, we have $g(X_0(N))\geq6$ and the $\Q$-gonality of $X_0(N)$ is at least $6$ by \cite[Theorem 1.1]{NajmanOrlic22}. \Cref{translate_of_abelian_variety_thm} now tells us that if the curve $X_0(N)$ has infinitely many degree $5$ points, then $W_5^0(C)$ must contain a translate of a positive rank abelian variety $A$. By \Cref{DF_AV_dim}, we must have $\dim A\leq2$.

    However, in all these cases, all simple abelian subvarieties $A$ of dimension $\leq2$ in the decomposition of the Jacobian $J_0(N)$ have analytic rank $0$. This can be checked on LMFDB by searching all newforms of level dividing $N$, weight $2$ and character order $1$. An example for $N=97$ is \begin{center}
    \url{https://www.lmfdb.org/ModularForm/GL2/Q/holomorphic/?level_type=divides&level=97&weight=2&char_order=1&showcol=analytic_rank}.
    \end{center} Therefore, their rank is also $0$ and we get a contradiction.
\end{proof}

\subsection{Debarre-Fahlaoui curves}
In \cite[Proposition 6.3]{KV2025}, there is a partial classification of the curves $C$ with $\min(\delta(C/K))=5$. Before we state it, we need to give the definition of Debarre-Fahlaoui curves occurring in that proposition. See \cite[Section 5]{KV2025} for more details.

Let $E$ be an elliptic curve over a number field $K$. Consider the map \begin{align}
    \begin{split}\label{eq:sym2_of_EC}
\pi : \Sym^2 (E) &\to E \\
    x +_1 y  &\mapsto x+_2 y
        \end{split}
\end{align}
where $+_1$ means addition of divisors on $E$ and $+_2$ is the group law of the elliptic curve.

On $\Sym^2 (E)$, there are two ways to describe divisors. For a point $x$ on $E$, we let $F_x := \pi^{-1}(x)$. All the $F_x$ are numerically equivalent and we denote by $F$ this equivalence class. Similarly, for a point $x$ on $E$, we can also define the divisor $H_x$ to be the subvariety of $\Sym^2 (E)$ consisting of all degree $2$ divisors containing $x$. Again, all the $H_x$ are numerically equivalent, and we denote by $H$ the numerical equivalence class.

\begin{definition}\cite[Definition 5.1]{KV2025}\label{debarrefahlaoui_def} Let $E$ be a positive rank elliptic curve over a number field $K$. A \textit{Debarre–Fahlaoui} curve is a geometrically integral curve on $\Sym^2 (E)$ in the numerical class $(d +m)H -mF$, for some integers $d,m$ with $1 \leq m \leq d$.
\end{definition}

\begin{prop}[{\cite[Proposition 6.3]{KV2025}}]\label{kadetsvogt_prop_DF}
    Suppose that $C$ is a nice curve of genus $g$ over a number field $K$ such that $\min(\delta(C/K))=5$. Then one of the following cases holds:
    \begin{enumerate}[(1)]
        \item There is a degree $5$ $K$-rational morphism from $C$ to $\PP^1$ or a positive $K$-rank elliptic curve.
        \item $C$ is the normalization of a Debarre-Fahlaoui curve.
        \item $g=5,6,7,8$.
    \end{enumerate}
\end{prop}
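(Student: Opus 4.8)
The plan is to feed the hypothesis $\min(\delta(C/K))=5$ into the geometric criterion of \Cref{translate_of_abelian_variety_thm}, control the genus with \Cref{KVthm}, bound the dimension of the relevant abelian variety with \Cref{DF_AV_dim}, and finally read off the $\Sym^2$-structure of \Cref{debarrefahlaoui_def}. Since $\min(\delta(C/K))=5$ we have $\#C_5'(K)=\infty$, so \Cref{translate_of_abelian_variety_thm} with $d=5$ leaves exactly two possibilities: either $C$ admits a degree $5$ morphism to $\PP^1$ over $K$, which is a subcase of (1); or there is a degree $5$ point $x$ with conjugates $x_1,\dots,x_5$ and a positive-rank abelian subvariety $A\subseteq\Pic^0(C)$ with $\phi_5(x_1+\cdots+x_5)+A\subseteq W_5^0C$. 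I would assume the second possibility and, in addition, assume that $C$ carries no degree $5$ $K$-rational map to $\PP^1$ or to a positive-rank elliptic curve (otherwise (1) already holds); the task is then to land in (2) or (3).

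Next I would bound the genus. With $d=5$ in \Cref{KVthm} one has $m=2$ and $\epsilon=2$. Because $5$ is prime, the factorization $5=\min(\delta(Y/K))\cdot\deg\phi$ in case (1) of \Cref{KVthm} with $\deg\phi\geq2$ forces $\deg\phi=5$ and $\min(\delta(Y/K))=1$, i.e.\ a degree $5$ map to $\PP^1$ or a positive-rank elliptic curve; this is exactly the situation we excluded, so case (2) of \Cref{KVthm} must hold and $g\leq\max(11,10)=11$. A separate low-genus analysis---using that Brill--Noether theory endows every curve of genus $g\leq4$ with a pencil of degree $\leq3$, which cannot coexist with $\min(\delta(C/K))=5$ once (1) and (2) are excluded---gives $g\geq5$. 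Thus $5\leq g\leq11$, and whenever $g\geq6$ we may apply \Cref{DF_AV_dim} with $d=5$, $r=0$ (so that $d\leq g-1$) to conclude $\dim A\leq 5/2$, that is, $\dim A\in\{1,2\}$.

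The core of the argument is then to convert the cohomological datum $\phi_5(x_1+\cdots+x_5)+A\subseteq W_5^0C$ into concrete geometry. When $\dim A=1$, $A$ is a positive-rank elliptic curve $E$, and the presence of a translate of $E$ inside $W_5^0C$ means that the degree $5$ effective divisors in the relevant classes sweep out a positive-dimensional family parametrized by $E$. Following the analysis of Debarre and Fahlaoui, this family either collapses to a genuine degree $5$ map from $C$ to $E$ or to $\PP^1$, which is case (1), or else it exhibits $C$ as the normalization of a curve on the surface $\Sym^2(E)$---whose structure map $\pi$ to $E$ is the one in \eqref{eq:sym2_of_EC}---lying in a numerical class as in \Cref{debarrefahlaoui_def}, which is case (2). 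The case $\dim A=2$ saturates the bound of \Cref{DF_AV_dim} up to integrality of the dimension and is treated by the same rigidity argument, again forcing either a degree $5$ gonal or elliptic map or the $\Sym^2$-structure. Finally, a Castelnuovo--Severi and Clifford estimate applied to the maps produced in this way sharpens $g\leq11$ to $g\leq8$ whenever neither (1) nor (2) holds, so the only remaining option is $g\in\{5,6,7,8\}$, which is (3).

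The step I expect to be the main obstacle is this last geometric extraction: passing from \emph{``$W_5^0C$ contains a translate of a positive-rank abelian variety, with no accompanying degree $5$ map to $\PP^1$ or a positive-rank elliptic curve''} to \emph{``$C$ lies on some $\Sym^2(E)$''}. This is precisely the delicate part of Debarre and Fahlaoui's study of abelian subvarieties of $W_d^0$ that underlies \Cref{DF_AV_dim}, and making the dimension count tight enough both to identify the surface $\Sym^2(E)$ and to improve the genus bound from $11$ down to $8$ is where the real work sits. The prime value $d=5$ is what keeps this tractable, since it collapses the factorization in \Cref{KVthm} and pins $\dim A$ to $\{1,2\}$.
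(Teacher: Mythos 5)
First, a point of comparison: the paper itself contains no proof of this statement --- it is imported verbatim as \cite[Proposition 6.3]{KV2025} --- so your attempt has to be measured against Kadets--Vogt's proof, not against anything in this paper's toolkit. Your opening reductions are sound and do match the coarse strategy: $\min(\delta(C/K))=5$ gives $\#C_5'(K)=\infty$, so \Cref{translate_of_abelian_variety_thm} applies with $d=5$; primality of $5$ collapses case (1) of \Cref{KVthm} exactly as in \Cref{new_cond_quintic}, yielding $g\leq\max(11,10)=11$ once degree-$5$ maps to $\PP^1$ and positive-rank elliptic curves are excluded; and \Cref{DF_AV_dim} pins $\dim A\in\{1,2\}$ when $g\geq6$. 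Up to there the numerology ($m=2$, $\epsilon=2$) is correct.

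The genuine gap is the step you yourself flag as the obstacle, and it is not a technicality but the entire content of the proposition. \Cref{DF_AV_dim} is only a dimension bound; it carries no structural conclusion. Debarre--Fahlaoui's rigidity results identify $C$ inside $\Sym^2(E)$ only when $\dim A$ is at (or essentially at) the maximum $d/2-r$, which here means the case $\dim A=2$; the case $\dim A=1$ --- a positive-rank elliptic curve sitting in $W_5^0C$ --- lies well below the bound, is not covered by \cite{DebarreFahlaoui}, and is precisely the hard new case of \cite{KV2025}, treated there by a delicate analysis (comparing the translate with $W_4^0C+C$, an induction on the degree, and Faltings' theorem on rational points of subvarieties of abelian varieties). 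Your sentence ``following the analysis of Debarre and Fahlaoui, this family either collapses to a genuine degree $5$ map \dots or else exhibits $C$ as the normalization of a curve on $\Sym^2(E)$'' is a restatement of the proposition, not a proof of it. Two further steps would also fail as written. The sharpening of $g\leq11$ to $g\leq8$ cannot come from ``a Castelnuovo--Severi and Clifford estimate applied to the maps produced in this way'': in the only scenario where that bound is needed, cases (1) and (2) both fail, so there are no maps in hand to estimate --- the cutoff at $8$ must emerge from carrying out the structural analysis above for every $g\geq9$. And your justification of $g\geq5$ is incorrect over $K$: Brill--Noether pencils of degree $\leq3$ on curves of genus $\leq4$ exist over $\overline{K}$ but need not descend (a plane quartic without rational points, or a genus-$4$ curve whose two trigonal series are swapped by Galois, has no such $K$-pencil), and a merely geometric pencil does not by itself contradict $\min(\delta(C/K))=5$; excluding $g\leq4$ requires an arithmetic argument about points of degree $\leq4$ on low-genus curves, which is again part of what \cite{KV2025} actually supplies.
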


\begin{proposition}\label{prop:BFtoEC} Let $E$ be an elliptic curve and $C \subseteq \Sym^2 (E)$ be a geometrically integral curve of genus $>0$. Then $C$ admits a non-constant map to $E$.
\end{proposition}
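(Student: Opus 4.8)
The plan is to use the projection $\pi\colon \Sym^2(E)\to E$ from \eqref{eq:sym2_of_EC} directly: I claim that the restriction $\pi|_C\colon C\to E$ is itself the desired non-constant map. The only thing that needs to be ruled out is that $\pi$ collapses $C$ to a point, i.e. that $C$ is contained in a single fiber of $\pi$. Since $\pi$ is defined over the base field, if $\pi|_C$ is non-constant then it is automatically a morphism to $E$ over that field, so the whole proposition reduces to understanding the fibers of $\pi$.

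First I would identify the fibers of $\pi$. By Abel's theorem for the elliptic curve $E$ (with origin $O$), two effective divisors $P+_1 Q$ and $P'+_1 Q'$ of degree $2$ are linearly equivalent if and only if $P+_2 Q = P'+_2 Q'$. Hence $\pi$ is precisely the Abel--Jacobi map $\Sym^2(E)\to \Pic^2(E)\cong E$, and its fiber over a degree-$2$ class is the complete linear system $|D|$ of the corresponding divisor. By Riemann--Roch on $E$, any divisor $D$ of degree $2$ satisfies $\ell(D)=2$, so $|D|\cong\PP^1$. Thus every fiber $F_x=\pi^{-1}(x)$ is a rational curve isomorphic to $\PP^1$; equivalently, $\pi$ realizes $\Sym^2(E)$ as a $\PP^1$-bundle (a ruled surface) over $E$. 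As an alternative check, $F_x$ is the quotient of $E$ by the involution $P\mapsto x-_2 P$ (subtraction in the group law), which has four fixed points, so by Riemann--Hurwitz the quotient has genus $0$ and is therefore $\PP^1$.

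Next I would argue by contradiction. Suppose $\pi|_C$ is constant, say $\pi(C)=\{x\}$. Then $C$ is a closed, geometrically integral, one-dimensional subscheme of the fiber $F_x\cong \PP^1$. Since both $C$ and $F_x$ are integral of dimension $1$, this forces $C=F_x\cong\PP^1$, so $C$ has genus $0$. This contradicts the hypothesis that $C$ has genus $>0$. Therefore $\pi|_C$ is non-constant, and $\pi|_C\colon C\to E$ is the required morphism.

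There is essentially no serious obstacle in this argument: its entire content is the observation that the fibers of $\pi$ are rational, so a curve of positive genus cannot be contained in a single fiber. The only point requiring a little care is the fiber computation, which I would pin down via Riemann--Roch (or the explicit involution description above) to ensure that \emph{every} fiber is genuinely a $\PP^1$, including the fibers over the special points where the involution $P\mapsto x-_2 P$ has its ramification.
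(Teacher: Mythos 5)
Your proposal is correct and follows essentially the same route as the paper: restrict $\pi\colon \Sym^2(E)\to E$ to $C$, note the fibers of $\pi$ are isomorphic to $\PP^1$, and conclude that a geometrically integral curve of genus $>0$ cannot lie in a fiber, so $\pi|_C$ is non-constant. The only difference is that you supply the fiber computation (via Abel--Jacobi and Riemann--Roch) that the paper asserts without proof, which is a fine addition but not a different argument.
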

\begin{proof}
Consider the map $\pi : \Sym^2 (E) \to E$ from \cref{eq:sym2_of_EC}. The fibers of $\pi$ are all isomorphic to $\PP^1$. In particular, $C$ cannot be contained in a fiber of $\pi$. This means that $\pi$ has to induce a non-constant map $\pi|_C : C \to E$.
\end{proof}
\begin{prop}\label{DFcurves}
    The modular curve $X_0(N)$ has only finitely many degree $5$ points for \[N\in\{93,115,133,147\}.\]
\end{prop}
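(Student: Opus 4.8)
The plan is to run the trichotomy of \Cref{kadetsvogt_prop_DF} and eliminate every case. First I would record that none of the four levels $93,115,133,147$ appears in the lists of \Cref{thmquadratic,thmcubic,quarticthm}, so $X_0(N)$ has only finitely many points of each degree $d\le 4$; hence if $X_0(N)$ had infinitely many quintic points we would have $\min(\delta(X_0(N)/\Q))=5$, and \Cref{kadetsvogt_prop_DF} would apply. I argue by contradiction, assuming this, and rule out cases (1), (2), (3) of that proposition in turn.

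Case (1) is immediate: each of these curves has $\gon_\Q X_0(N)\ge 6$ by \cite[Theorem 1.1]{NajmanOrlic22} (only $N=109$ has $\Q$-gonality $5$), so there is no degree $5$ map to $\PP^1$; and there is no degree $5$ map to a positive rank elliptic curve by \Cref{thm:pos_rank_penta}. Case (3) is ruled out by a genus computation: one finds $g(X_0(93))=9$ and $g(X_0(115))=g(X_0(133))=g(X_0(147))=11$, all strictly larger than $8$.

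The substance is case (2), where $X_0(N)$ is the normalization of a Debarre--Fahlaoui curve $C$ on $\Sym^2(E)$ for some positive rank elliptic curve $E$, with $C$ in the numerical class $(d+m)H-mF$ and $d=5$. By \Cref{prop:BFtoEC} the sum map $\pi$ restricts to a non-constant morphism $\pi|_C\colon C\to E$, whose degree is the intersection number $C\cdot F=d+m$. To control $m$ I would compute the arithmetic genus of $C$ by adjunction on $\Sym^2(E)$: using $H^2=1$, $H\cdot F=1$, $F^2=0$ and $K_{\Sym^2(E)}=-2H+F$, one gets $2p_a(C)-2=C\cdot(C+K)=d(d-1)-m(m-1)$, i.e. $p_a(C)=1+\binom{d}{2}-\binom{m}{2}$. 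Since the normalization $X_0(N)$ has geometric genus $g(X_0(N))\le p_a(C)$, plugging in $d=5$ forces $m=1$ when $g=11$ (the three curves of genus $11$, where $C$ is even smooth) and $m\in\{1,2\}$ when $g=9$ (the level $93$). Composing the normalization map with $\pi|_C$ therefore produces a non-constant morphism $X_0(N)\to E$ of degree $d+m=6$ (for $N\in\{115,133,147\}$) or degree $6$ or $7$ (for $N=93$) onto a positive rank elliptic curve $E$, forcing $E$ to be a positive rank elliptic isogeny factor of $J_0(N)$ with conductor dividing $N$.

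The final step, which I expect to be the main obstacle, is to show that no such morphism exists. Here I would invoke the quadratic form of \Cref{quadraticformprop}: the degrees of all $\Q$-morphisms $X_0(N)\to E$ are exactly the values of a positive definite quadratic form on $\textup{Hom}(J_0(N),E)$, and by \Cref{quadraticformcoefficients,coefficient_divisible_by_degf} every coefficient of this form is divisible by the modular degree $\deg f$ of $E$, so every such degree is a multiple of $\deg f$. It then suffices to check, for each of the four levels, that no positive rank elliptic curve with conductor dividing $N$ has modular degree dividing $6$ (or $7$ for $N=93$); equivalently, that $J_0(N)$ either has no positive rank elliptic isogeny factor at all, or that each such factor fails the divisibility. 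This is a finite verification using the data in the LMFDB together with the code accompanying \Cref{quadraticformcoefficients}, and it completes the contradiction in case (2). The delicate point is precisely identifying the positive rank elliptic factors and their modular degrees; the preceding genus bound is essential, since without pinning $m$ down one could not restrict attention to the single degrees $6$ and $7$.
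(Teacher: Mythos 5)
Your proof is correct, but it is substantially heavier than the paper's, and the extra machinery turns out to be unnecessary. You and the paper share the same skeleton: reduce to $\min(\delta(X_0(N)/\Q))=5$, run the trichotomy of \Cref{kadetsvogt_prop_DF}, kill case (1) by $\gon_\Q X_0(N)\geq 6$ plus \Cref{thm:pos_rank_penta}, kill case (3) by $g\geq 9$, and in case (2) use the sum map $\pi:\Sym^2(E)\to E$ (\Cref{prop:BFtoEC}) to produce a non-constant morphism $X_0(N)\to E$. The difference is what you do with that morphism. You pin down its degree via adjunction on $\Sym^2(E)$ (your intersection numbers $H^2=1$, $H\cdot F=1$, $F^2=0$, $K=-2H+F$ and the formula $p_a=1+\binom{d}{2}-\binom{m}{2}$ are correct, and comparing with $g(X_0(N))\in\{9,11\}$ does force $m\in\{1,2\}$, hence degree $6$ or $7$), and then propose a modular-degree divisibility check via \Cref{quadraticformprop,quadraticformcoefficients,coefficient_divisible_by_degf}. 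The paper instead observes that for $N\in\{93,115,133,147\}$ every simple positive-rank isogeny factor of $J_0(N)$ has dimension $2$, i.e.\ there is \emph{no} positive-rank elliptic curve of conductor dividing $N$ at all; since any non-constant $\Q$-morphism $X_0(N)\to E$ forces $E$ to be such a factor, non-constancy alone already gives the contradiction, with no control on the degree needed. Your closing remark that "the preceding genus bound is essential" is therefore mistaken: your final LMFDB verification succeeds vacuously (the set of candidate $E$ is empty), which retroactively makes the whole adjunction computation and the degrees $6$, $7$ irrelevant. One further point to be careful about: your case (2) analysis silently assumes the Debarre--Fahlaoui curve in \Cref{kadetsvogt_prop_DF} has parameter $d=5$ in the class $(d+m)H-mF$; the proposition as stated in this paper does not record that, so if you keep the degree computation you should justify $d=5$ from the source of \Cref{kadetsvogt_prop_DF} --- whereas the paper's argument needs nothing about the numerical class.
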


\begin{proof}
    In these cases, all the simple factors of the Jacobian $J_0(N)$ of positive rank have dimension $2$. Therefore, we cannot use the same argument as in \Cref{no_AV_of_dim_12}.

    For the sake of contradiction, we assume that $X_0(N)$ has infinitely many degree $5$ points. All these curves have genus $g \geq 9$. Also, the $\Q$-gonality of these curves is at least $6$ by \cite[Theorem 1.1]{NajmanOrlic22} and there are no positive rank elliptic curves of conductor dividing $N$. Therefore, by \Cref{kadetsvogt_prop_DF}, the curve $X_0(N)$ must be the normalization of a Debarre-Fahlaoui curve.

    However, applying \Cref{prop:BFtoEC} to the definition of a Debarre-Fahlaoui curve tells us that in that case, we must have a non-constant morphism from $X_0(N)$ to a positive rank elliptic curve, which contradicts the aforementioned fact that there are no positive rank elliptic curves of conductor dividing $N$ for these levels $N$.
\end{proof}

\subsection{Translates of abelian varieties and specialization} 

The only levels $N$ that still need to be checked to prove \Cref{densitydeg5thm} are $N=112$ and $N=117$. We were not able to solve these two cases using the previous methods.

Before handling the remaining cases, we need some results on what happens with translates of abelian varieties under specialization. In this section we will use the definition of $W_d^0 C$ as in \Cref{eq:Wd0}.

\begin{lem} \label{lem:reduction_AV_translate} Let $R$ be a discrete valuation ring with fraction field $K$ and residue field $k$. Let $C$ be a smooth projective curve over $R$ with geometrically irreducible fibers such that $C(R)\neq \emptyset$. Let $\mathcal A$ be an abelian subvariety of $\Pic^0(C)_K$ and let $A \subset \Pic^0(C)$ be its Zariski closure. Suppose that $d$ is an integer such that $W_d^0 C_{K} \subseteq \Pic^d(C)_{K}$ contains a translate of $\mathcal A = A_K$. Then $W_d^0 C_{k}$ contains a translate of $A_{k}$.
\end{lem}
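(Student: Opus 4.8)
The plan is to reduce the statement to a problem of specialization inside the relative Picard scheme, and then to control the image of the Abel--Jacobi map by a properness argument.

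First I would set up the relative objects over $R$. Since $C\to\Spec R$ is smooth and proper with geometrically integral fibers and $C(R)\neq\emptyset$, the relative Picard functor is representable; $J:=\Pic^0(C)$ is an abelian scheme over $R$, and $\Pic^d(C)$ is a $J$-torsor, hence smooth and proper over $R$. Write $x\in\Pic^d(C)(K)$ for a point with $x+\mathcal A\subseteq W_d^0 C_K$. Because $\Pic^d(C)\to\Spec R$ is proper, the valuative criterion of properness lets me extend $x$ uniquely to a section $\tilde x\in\Pic^d(C)(R)$; write $x_k:=\tilde x_k$ for its closed fiber.

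Next I would identify the closure of the translate. Translation by $\tilde x$ gives an isomorphism of $R$-schemes $\tau\colon J\xrightarrow{\sim}\Pic^d(C)$, $a\mapsto a+\tilde x$, carrying $\mathcal A=A_K$ onto $x+\mathcal A$. Since an isomorphism preserves Zariski closures and $A$ is by definition the closure of $\mathcal A$ in $J$, the subscheme $Z:=\tau(A)=\tilde x+A$ is the closure of $x+\mathcal A$ in $\Pic^d(C)$. As the scheme-theoretic closure of a subscheme of the generic fiber over the DVR $R$, $Z$ is flat over $R$ with dense generic fiber $Z_K=x+\mathcal A$, and its special fiber is $Z_k=x_k+A_k$, a translate of $A_k$. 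It therefore suffices to prove $Z_k\subseteq W_d^0 C_k$.

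The heart of the argument, and the step I expect to be the main obstacle, is to show that the specialized translate lands in $W_d^0(C_k)$ and not merely in the special fiber of the $R$-model $W_d^0 C$, which could a priori be strictly larger. I would do this by pulling back the Abel--Jacobi map. Set $P:=\Sym^d(C)\times_{\Pic^d(C)}Z$ with projection $q\colon P\to Z$; as a base change of the proper morphism $\phi_d$, the map $q$ is proper, so $q(P)$ is closed in $Z$. On generic fibers $q_K\colon P_K\to Z_K$ is surjective, because $Z_K=x+\mathcal A\subseteq W_d^0 C_K=\operatorname{im}(\phi_{d,K})$; hence $q(P)$ is a closed subset containing the dense subset $Z_K$, forcing $q(P)=Z$ and thus $q$ surjective. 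In particular $q_k\colon P_k\to Z_k$ is surjective, and since $P_k=\Sym^d(C_k)\times_{\Pic^d(C_k)}Z_k$, every point of $Z_k$ lies in the image of $\phi_{d,k}\colon\Sym^d(C_k)\to\Pic^d(C_k)$, whose image is precisely $W_d^0(C_k)=W_d^0 C_k$. This gives $Z_k\subseteq W_d^0 C_k$, so $W_d^0 C_k$ contains the translate $x_k+A_k$ of $A_k$. The points requiring care are the representability and properness of the relative Picard scheme, the compatibility of $\Sym^d$ and $\Pic^d$ with base change to $k$, and the density of $Z_K$ in $Z$.
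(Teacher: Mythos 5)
Your proof is correct and follows essentially the same route as the paper's: properness of $\Sym^d(C)$ over $R$ makes $\phi_d$ universally closed, so $W_d^0 C$ is closed in $\Pic^d(C)$ and must contain the Zariski closure of the translate, and one concludes by passing to special fibers. The two points you handle explicitly --- extending the translating point to an $R$-section of the proper scheme $\Pic^d(C)$ so that the special fiber of the closure is literally the translate $x_k + A_k$, and using stability of surjectivity under base change (via your fiber product $P$) to see that the special fiber of the $R$-model image is exactly $W_d^0(C_k)$ rather than something larger --- are left implicit in the paper's shorter proof, so yours is a more careful rendering of the same argument.
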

\begin{proof}
Notice that $\Sym^d(C)$ is proper over $R$; this can be seen from the valuative criterion of properness. Indeed, points on $\Sym^d(C)$ can be interpreted as horizontal effective divisors of degree $d$, and hence the valuative criterion of properness is satisfied because divisors on a curve over the fraction field of a discrete valuation ring can uniquely be extended to a horizontal divisor over the discrete valuation ring itself.

Since $\Sym^d(C)$ is proper, it is universally closed. In particular $W_d^0 C$, being the image of $\Sym^d(C)$ in $\Pic^0(C)$, is a closed subset. By assumption $W_d^0 C$ contains $\mathcal A$, but since $W_d^0 C$ is closed, it will also contain its closure $A$. The desired statement now follows by taking the special fiber over $k$ of the inclusion $A \subseteq W_d^0 C$.
\end{proof}

\begin{prop}\label{prop112117}
If $C=X_0(112)$ or $C=X_0(117)$, then $W_5^0C_\Q$ does not contain a translate of a positive rank abelian variety.
\end{prop}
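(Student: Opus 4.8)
The plan is to argue by contradiction and to transfer the question to a finite field, where it becomes a finite verification. So suppose $W_5^0 C_\Q$ contains a translate of a positive rank abelian variety $\mathcal A \subseteq \Pic^0(C)_\Q$. Both $X_0(112)$ and $X_0(117)$ have genus $g = 11$, so $d = 5 \leq g - 1$ and \Cref{DF_AV_dim} (with $r = 0$) applies, forcing $\dim \mathcal A \leq 5/2$, i.e. $\dim \mathcal A \in \{1,2\}$. This is the crucial finiteness input: it cuts the list of subvarieties we must rule out down to those isogenous to a single positive rank elliptic curve, to a product of two such, or to a single positive rank abelian surface, all of which can be read off from the decomposition of $J_0(N)$ into simple modular factors together with their analytic ranks from the LMFDB.

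Next I would fix a prime $p$ of good reduction (so $p \nmid N$; note $C(\Z_{(p)}) \neq \emptyset$ via the cusp $\infty$) and invoke \Cref{lem:reduction_AV_translate}: if $\mathcal A$ has a translate inside $W_5^0 C_\Q$, then the reduction $A_{\F_p}$ of the Zariski closure of $\mathcal A$ has a translate inside $W_5^0 C_{\F_p}$. Since good reduction preserves dimension, $A_{\F_p}$ is again a nonzero abelian subvariety of $J_0(N)_{\F_p}$ of dimension at most $2$. It is the contrapositive that I will actually use: it suffices to exhibit a single good prime $p$ for which $W_5^0 C_{\F_p}$ contains no translate of any of the finitely many candidate reductions $A_{\F_p}$.

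Over a finite field this last statement is decidable. Working in an extension $\F_{p^k}$ over which $A_{\F_p}$ is rational, I would compute the finite abelian group $J_0(N)(\F_{p^k}) \cong \Pic^0(C)(\F_{p^k})$ explicitly, realise each candidate $A(\F_{p^k})$ as the subgroup cut out by the relevant Hecke idempotent, and enumerate $W_5^0 C(\F_{p^k})$ as the image of $\Sym^5 C(\F_{p^k})$ in $\Pic^5(C)(\F_{p^k})$ from an explicit model of the curve and Riemann--Roch. Containment of a translate then reduces to the finite test of whether some $w \in W_5^0 C(\F_{p^k})$ satisfies $w + A(\F_{p^k}) \subseteq W_5^0 C(\F_{p^k})$. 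I expect this to fail by a wide margin: $W_5^0 C$ is only $5$-dimensional inside the $11$-dimensional $\Pic^5(C)$, while a coset of $A$ carries $\#A(\F_{p^k}) \to \infty$ points, so a single well-chosen small prime should already force every coset to protrude from $W_5^0 C$. Crucially, this test needs no control over the gonality of $C$ in characteristic $p$, since it works directly with all effective degree-$5$ classes.

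The main obstacle I anticipate is the finite-field computation itself rather than the conceptual framework: one needs a workable model of $X_0(112)$ and $X_0(117)$ over $\F_p$, an explicit handle on the subgroup $A(\F_{p^k}) \subseteq J_0(N)(\F_{p^k})$, and the enumeration of degree-$5$ effective divisor classes, all of which grow quickly with $p$ and $k$. Two technical points demand care. First, one must ensure that a hypothetical geometric translate over $\overline{\F_p}$ would descend to an $\F_{p^k}$-rational coset for the exponent $k$ actually tested, so that a negative finite check is genuinely conclusive; this follows from Frobenius-stability of $W_5^0 C_{\F_p}$ and of $A_{\F_p}$, but the admissible $k$ must be chosen accordingly. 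Second, one must confirm that the computed subgroup really is $A_{\F_p}(\F_{p^k})$ and not a larger or smaller lattice of classes. Keeping $p$ and $k$ as small as the descent guarantee allows, while still separating every coset from $W_5^0 C$, is where the actual effort concentrates.
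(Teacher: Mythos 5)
Your core strategy coincides with the paper's proof: the same specialization step via \Cref{lem:reduction_AV_translate}, the same observation that $\Sym^5 C(\F_p)\to W_5^0 C(\F_p)$ is surjective because the fibers are projective spaces (and $C$ has the rational cusp $\infty$), and the same finite coset-escape test, which the paper runs with $k=1$ at $p=3$ for $X_0(112)$ and $p=5$ for $X_0(117)$, finding for each $D\in\Sym^5 C(\F_p)$ an $i_D$ with $D+i_DP\notin W_5^0 C(\F_p)$. Your worry about descending geometric translates is already absorbed by the framework: \Cref{lem:reduction_AV_translate} produces a translate defined over the residue field $\F_p$, and Lang's theorem then supplies an $\F_p$-rational base point, so testing at $k=1$ is conclusive; the paper also gets by with an explicit plane model and the modular parametrization rather than computing the full group $J_0(N)(\F_{p^k})$ and Hecke idempotents.

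There is, however, a genuine slip in your reduction to finitely many candidates. After bounding $\dim\mathcal A\leq 2$ via \Cref{DF_AV_dim}, you enumerate the possibilities as ``a positive rank elliptic curve, a product of two such, or a positive rank abelian surface.'' This list omits positive-rank subvarieties of the form $E+E_0$ with $E$ of positive rank and $E_0$ a rank-zero elliptic factor: these are two-dimensional, have positive rank, and do occur inside $J_0(112)$ and $J_0(117)$, since both Jacobians contain several rank-zero elliptic isogeny factors. As stated, your finite check would therefore not address all cases your dimension bound allows. The clean fix --- which is what the paper does, and which renders \Cref{DF_AV_dim} unnecessary here --- is to note that any positive rank abelian subvariety contains a \emph{simple} abelian subvariety of positive rank, and a translate of the former contains a translate of the latter; by the LMFDB, each of $J_0(112)$ and $J_0(117)$ has exactly one simple isogeny factor of positive (analytic, hence algebraic) rank, namely the elliptic curves \lmfdbec{112}{a}{2} and \lmfdbec{117}{a}{4}, each occurring with multiplicity one, so the unique subvariety in that isogeny class is the only translate one must rule out. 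With that one-sentence repair, your proposed computation tests exactly the right object and matches the paper's argument.
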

\begin{proof}
According to the LMDFB, each of $J_0(112)$ and $J_0(117)$ contains exactly one simple abelian variety of positive analytic rank. This can be seen on
\begin{center}
    \url{https://www.lmfdb.org/ModularForm/GL2/Q/holomorphic/?level_type=divides&level=112&weight=2&char_order=1&analytic_rank=1-}
\end{center} and \begin{center}
    \url{https://www.lmfdb.org/ModularForm/GL2/Q/holomorphic/?level_type=divides&level=117&weight=2&char_order=1&analytic_rank=1-}\,.
\end{center}

The abelian varieties attached to these modular forms are the elliptic curves \lmfdbec{112}{a}{2} and \lmfdbec{117}{a}{4}, respectively, both of which have positive algebraic rank as well. Let $E$ be the elliptic curve  \lmfdbec{112}{a}{2} when $C=X_0(112)$, and $E$ be the elliptic curve \lmfdbec{117}{a}{4} when $C=X_0(117)$. Then, in order to prove the proposition, it suffices to show that $W_5^0 C_\Q$ does not contain a translate of $E$. We will do this using \Cref{lem:reduction_AV_translate} and some explicit computations.

We computed explicit models for $X_0(112)$ and $X_0(117)$ together with maps to \lmfdbec{112}{a}{2} respectively \lmfdbec{117}{a}{4}, see \Cref{sec:computing_models} for more details of how we computed this.

Let $p$ be a prime of good reduction for $C$. Note that since $C$ contains the rational cusp $\infty$, we know that $\Sym^5C(\F_p) \to W_5^0 C_{\F_p}$ is surjective. This is because each fiber is isomorphic to $\PP^n_{\F_p}$ for some integer $n\geq 0$.

For $C=X_0(112)$, we work over $\F_3$ and we will show that $W_5^0 C_{\F_3}$ does not contain a translate of $E_{\F_3}$. Using Magma we enumerated the finitely many points on $\Sym^5C(\F_3)$. If we manage to show that $D+E_{\F_3} \subsetneq W_5^0 C_{\F_3}$ for each $D \in \Sym^5C(\F_3)$, then we are done. In order to show that $D+E_{\F_3} \subsetneq W_5^0 C_{\F_3}$, we proceeded as follows.

Note that $E(\F_3)$ is a cyclic group of order $6$. We computed a generator $P$ of $E(\F_3)$. For each $D \in \Sym^5C(\F_3)$ we found an integer $i_D \leq 5$ such that $D+i_DP \notin W_5^0 C(\F_3)$, which shows that $D+E_{\F_3} \subsetneq W_5^0 C_{\F_3}$ and we are done.

For $C=X_0(117)$ we performed a similar computation, however this time working over $\F_5$. In this case $E(\F_5) \cong \Z/2\Z \times \Z/4\Z$. This time we used a point $P \in E(\F_5)$ and showed that for each $D \in \Sym^5C(\F_5)$ there is an integer $i_D \leq 3$ such that $D+i_DP \notin W_5^0 C(\F_5)$.

The code for these computations is in the file \github{X0_112.m} and \github{X0_117.m}.

\end{proof}

\begin{cor}\label{cor_112_117}
    The modular curve $X_0(N)$ has only finitely many degree $5$ points for \[N\in\{112,117\}.\]
\end{cor}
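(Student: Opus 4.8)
The plan is to combine \Cref{translate_of_abelian_variety_thm} with \Cref{prop112117}; once those are in hand, the corollary should be immediate. Write $C = X_0(N)$ with $N \in \{112, 117\}$. By \Cref{translate_of_abelian_variety_thm}, the set $C_5'(\Q)$ of degree $5$ points is infinite if and only if one of the following holds: (1) there is a degree $5$ morphism $C \to \PP^1$ over $\Q$; or (2) there is a degree $5$ point $x$ and a positive rank abelian subvariety $A \subset \Pic^0(C)$ with $\phi_5(x_1 + \cdots + x_5) + A \subseteq W_5^0 C$, where $x_1, \ldots, x_5$ are the Galois conjugates of $x$. So it suffices to rule out both alternatives for each of the two levels.

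For alternative (1), I would invoke the gonality classification. By \cite[Theorem 1.1]{NajmanOrlic22} the unique level with $\gon_{\Q} X_0(N) = 5$ is $N = 109$, so for $N \in \{112, 117\}$ one has $\gon_{\Q} X_0(N) \geq 6$, and in particular there is no degree $5$ map to $\PP^1$ over $\Q$. For alternative (2), the key observation is that the containment $\phi_5(x_1 + \cdots + x_5) + A \subseteq W_5^0 C$ says precisely that $W_5^0 C_\Q$ contains a translate of the positive rank abelian variety $A$: the divisor class $\phi_5(x_1 + \cdots + x_5) \in \Pic^5(C)$ supplies the translation, placing a copy of $A$ inside $W_5^0 C \subseteq \Pic^5(C)$. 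This is exactly the configuration excluded by \Cref{prop112117} for both $C = X_0(112)$ and $C = X_0(117)$, so alternative (2) cannot occur either.

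Since neither (1) nor (2) of \Cref{translate_of_abelian_variety_thm} holds, that theorem forces $\#C_5'(\Q) < \infty$, which is the assertion. At this stage there is no genuine obstacle remaining: the entire substance of the argument is packaged into \Cref{prop112117}, whose proof is the hard step (enumerating $\Sym^5 C(\F_p)$ for $p = 3$ respectively $p = 5$, verifying that no translate of the relevant positive rank elliptic curve fits inside $W_5^0 C_{\F_p}$, and then transporting the conclusion back to characteristic zero via the specialization result \Cref{lem:reduction_AV_translate}). The corollary itself is a direct bookkeeping consequence, amounting to checking that both alternatives of the infinitude criterion have been eliminated.
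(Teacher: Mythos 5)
Your proposal is correct and follows essentially the same route as the paper: rule out a degree $5$ map to $\PP^1$ via the $\Q$-gonality bound from \cite{NajmanOrlic22} (the paper cites Theorem 1.4 there for gonality exactly $6$, but your citation of the classification giving gonality $\geq 6$ is equally valid), and rule out the abelian-variety alternative of \Cref{translate_of_abelian_variety_thm} by observing, as the paper does, that it is precisely the configuration excluded by \Cref{prop112117}. No gaps; the corollary is indeed immediate once those two ingredients are in place.
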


\begin{proof}
    These two curves have $\Q$-gonality equal to $6$ by \cite[Theorem 1.4]{NajmanOrlic22}. Also, by \Cref{prop112117}, if $C=X_0(112)$ or $C=X_0(117)$, then $W_5^0C_\Q$ does not contain a translate of a positive rank abelian variety. Therefore, by \Cref{translate_of_abelian_variety_thm}, these two curves have only finitely many degree $5$ points.
\end{proof}

\subsection{Explicitly computing modular parametrizations }\label{sec:computing_models}
Let $E$ be an elliptic curve over $\Q$ of conductor $N$. Then $E$ admits a modular parametrization, i.e. there is a map $\phi: X_0(N) \to E$. 
In this section we will shortly explain how we obtained an explicit algebraic equation for $X_0(N)$ and the corresponding map $X_0(N) \to E$. 

Let $\{f_1(\tau),\dots, f_g(\tau)\}$ be a basis for the space of weight 2 cuspforms, where $g$ is the genus of the curve $X_0(N)$.
Defining $\omega_i = f_i(\tau) d\tau$, the set $\{\omega_i\}$ then forms a basis for the space of holomorphic differentials on $X_0(N)$.
If $X_0(N)$ is not hyperelliptic, the map $\phi: X_0(N) \to \mathbb{P}^{g-1}$ defined by \[\phi(P)=[\omega_1(P):\cdots:\omega_g(P)]=[f_1(P):\cdots:f_g(P)]\] 
gives the canonical embedding.
In this case, we choose three appropriate cuspforms $f_i, f_j, f_k$ so that defining $X = f_i / f_k$ and $Y = f_j / f_k$ ensures that $X$ and $Y$ generate the function field of $X_0(N)$. This can always be done when $X_0(N)$ is not hyperelliptic since then $X_0(N)$ is isomorphic to its image in $\PP^{g-1}$, and from there one can find $f_i,f_j,f_k$ by choosing a suitable projection to $\PP^2$ as in \cite[Exercise I.4.9.]{hartshorne1977algebraic} to get a plane model. We do not know beforehand which of the one forms will be a suitable choice for $f_i,f_j,f_k$. However, in practice most choices will work, so we just choose 3 of them and then later verify that the model of $X_0(N)$ obtained has the correct geometric genus. This verification suffices, since having the same geometric genus forces $X_0(N)$ to be birational to its image in $\PP^2$.
The $q$-expansions of $X$ and $Y$ can be computed from the $q$-expansions of $f_i, f_j$, and $f_k$.
Using \cite[Algorithm 4.2]{Jeon2022}, we obtain an equation $F_N(X,Y)$ satisfied by $X$ and $Y$, which gives a plane equation for $X_0(N)$.

For example, consider the curve $X_0(112)$. 
Let us choose the following three weight 2 cuspforms on $X_0(N)$:
\begin{align*}
f_1=& q^9 - 2q^{13} + 4q^{17} - q^{21} - 3q^{25} - 4q^{29} + \cdots \\
f_2=& 2q^{10} - q^{14} - 2q^{18} - 2q^{22} + 2q^{26} + \cdots \\
f_3=& q^{11} + q^{15} - 2q^{19} - q^{23} - \cdots
\end{align*}
Now, defining 
\begin{align}\label{xy1}
X=&\frac{f_2}{f_1}= 2q + 3q^5 - 4q^9 - 20q^{13} -13q^{17} + \cdots \\  \notag
Y=&\frac{f_3}{f_1}= q^2 + 3q^6 - 12q^{14} - 18q^{18} + \cdots,
\end{align} 
we obtain the following relation satisfied by $X$ and $Y$ using \cite[Algorithm 4.2]{Jeon2022},
\begin{align*}
F_{112}(X,Y)=&X^{10}Y + 25X^6Y^5 + 162X^2Y^9 - 5X^8Y^2 - 77X^4Y^6 - 81Y^{10} \\
&+ X^6Y^3 - 78X^2Y^7 + X^8 + 31X^4Y^4 + 126Y^8 - 4X^6Y - 53X^2Y^5 \\
&- 6X^4Y^2 - 85Y^6 + 17X^2Y^3 + 28Y^4 + X^2Y - 4Y^2=0.
\end{align*}
Since the genus of the curve defined by this equation is 11, which is the same as the genus of $X_0(112)$, 
it follows that $X$ and $Y$ generate the function field of $X_0(112)$, and thus, $F_{112}(X,Y)=0$ is a defining equation of $X_0(112).$

In order to explicitly get a modular parametrization, we need two functions $F_1,F_2 \in \Q(X_0(N))$ such that $F_1,F_2$ are the $x$ and $y$ coordinates of a map to a long Weierstrass form of the elliptic curve $E$. The function \texttt{elltaniyama} from PARI/GP \cite[\S 3.15.83]{PARIGP} takes a long Weierstrass form of the elliptic curve $E$ as input, and from this it can determine the $q$-expansions of such $F_1$ and $F_2$ up to arbitrary precision. After having obtained these $q$-expansions, we can use \cite[Algorithm 4.3]{Jeon2022} to express $F_1$ and $F_2$ in terms of $X$ and $Y$.

Let us reconsider the case of $X_0(112)$. 
First, the $q$-expansions that satisfy the $x$-coordinate and $y$-coordinate of the long Weierstrass equation of \lmfdbec{112}{a}{2} are as follows.
\begin{align}\label{xy2}
x=&q^{-2} + 1 + 3q^2 + 4q^4 + 9q^6 + 12q^8 + 15q^{10} + \cdots,\\ \notag
y=&-q^{-3} - 2q^{-1} - 5q - 11q^3 - 16q^5 - 31q^7 - 57q^9 - \cdots.
\end{align}
The file \github{modular_parameterisations.gp} contains the PARI/GP code that computes the above q-expansions. Applying \cite[Algorithm 4.3]{Jeon2022} to the $q$-expansions in \eqref{xy1} and \eqref{xy2}, we obtain the following map $(F_1,F_2):X_0(112)\to$\lmfdbec{112}{a}{2} as follows:
{\tiny \begin{align*}
F_1(X,Y)=&-(5X^6Y + 16X^4Y^3 + 72X^2Y^5 + 324Y^7 + 4X^6 - 16X^4Y^2 - 28X^2Y^4 - 36Y^6 - 28X^4Y + 14X^2Y^3 \\
&- 60Y^5 + 5X^4 + 56X^2Y^2 + 38Y^4 - 11X^2Y - 28Y^3 - 8Y^2 + 1)/(5X^6Y - 6X^4Y^3 + 54X^2Y^5 - 25X^4Y^2 \\
&- 15X^2Y^4 - 189Y^6 - 5X^4Y - 8X^2Y^3 + 90Y^5 + X^4 + 25X^2Y^2 + 129Y^4 - 43Y^3 - X^2 - 15Y^2 + 3Y) \\
F_2(X,Y)=& -(56X^8Y + 171X^6Y^3 + 1229X^4Y^5 + 2322X^2Y^7 + 2268Y^9 + 6X^8 - 100X^6Y^2 - 217X^4Y^4 \\
&+ 1119X^2Y^6 - 513Y^8 - 31X^6Y + 154X^4Y^3 + 628X^2Y^5 - 2349Y^7 + 90X^6 + 326X^4Y^2 - 319X^2Y^4 \\
&+ 858Y^6 - 279X^4Y - 286X^2Y^3 + 1049Y^5 + 45X^4 + 72X^2Y^2 - 444Y^4 - 136X^2Y - 200Y^3 + 6X^2 \\
&+ 81Y^2 + 20Y + 6)/(X(27X^6Y^2 - 8X^4Y^4 + 72X^2Y^6 - 60X^4Y^3 + 136X^2Y^5 - 1170Y^7 - 18X^4Y^2 \\
&- 414X^2Y^4 + 744Y^6 - X^4Y + 230X^2Y^3 + 890Y^5 + 6X^4 + 83X^2Y^2 - 270Y^4 - 20X^2Y - 248Y^3 \\
&- 6X^2 + 27Y))
\end{align*}}

Since the degree of this map is $8$, which matches the degree of the modular parametrization 
$\phi$, we conclude that it indeed gives the modular parametrization $\phi$.

The computation for $X_0(117)$ is done analogously. The Maple code for these computations can be found in \github{modular_parameterisation_112a2.mpl} and \github{modular_parameterisation_117a4.mpl}

\subsection{Proof of \texorpdfstring{\Cref{densitydeg5thm}}{the minimum density degree 5 theorem}}\label{main_results_section}

\begin{proof}
If $\min(\delta(X_0(N/\Q))=5$, then $N$ must be one of the following $17$ values by \Cref{new_cond_quintic_cor1}:

$${76, 84, 90, 93, 97, 108, 109, 112, 113, 115, 117, 127, 133, 137, 139, 147, 169}.$$
We will consider these $17$ values case by case.

\begin{itemize}
    \item $N=109$: Because of \cite[Theorem 1.3]{NajmanOrlic22} the curve $X_0(109)_\Q$ admits a morphism of degree $5$ to $\PP^1$ so it has infinitely many points of degree $\leq 5$.
It has only finitely many points of degree $\leq4$ by \Cref{thmquadratic,thmcubic,quarticthm}, hence $\min(\delta(X_0(109)/\Q))=5$.
    \item $N=76, 84, 90, 108$: Here $\min(\delta(X_0(N)/\Q))\neq 5$ due to \Cref{jacobian_rank0_prop}.
    \item $N=97, 113, 127, 137, 139, 169$: Here $\min(\delta(X_0(N)/\Q))\neq 5$ due to \Cref{no_AV_of_dim_12}.
    \item $N=93, 115, 133, 147$: Here $\min(\delta(X_0(N)/\Q))\neq 5$ due to \Cref{DFcurves}.
    \item $N=112,117$: Here $\min(\delta(X_0(N)/\Q))\neq 5$ due to \Cref{cor_112_117}.
\end{itemize}
\end{proof}

After having proved \Cref{densitydeg5thm}, we now proceed to the proof of \Cref{quinticcor}. The cases not covered by \Cref{densitydeg5thm} are those with infinitely many points of degree $\leq4$, and they are listed in \Cref{quarticthm}. 
The remainder of the paper presents results used to establish \Cref{quinticcor}.

\section{Castelnuovo-Severi inequality}\label{cs_section}

We used the Castelnuovo-Severi inequality to prove that some curves $X_0(N)$ do not admit a degree $5$ morphism to $\PP^1$ or an elliptic curve. The reader can look at \cite[Theorem 3.11.3]{Stichtenoth09} for a proof of this inequality. Some of these curves have $\Q$-gonality at most $4$ (as can be seen in \Cref{csprop}), hence we cannot use methods like the $\F_p$-gonality (see \cite[Section 3]{NajmanOrlic22}) to prove that there are no degree $5$ morphisms.

\begin{prop}[Castelnuovo-Severi inequality]\label{tm:CS}
Let $K$ be a perfect field, and let $X,\ Y, \ Z$ be curves over $K$. Let non-constant morphisms $\pi_Y:X\rightarrow Y$ and $\pi_Z:X\rightarrow Z$ over $K$ be given, and let their degrees be $m$ and $n$, respectively. Assume that there is no morphism $X\rightarrow X'$ of degree $>1$ through which both $\pi_Y$ and $\pi_Z$ factor. Then the following inequality holds:
\begin{equation} \label{eq:CS}
g(X)\leq m \cdot g(Y)+n\cdot g(Z) +(m-1)(n-1).
\end{equation}
\end{prop}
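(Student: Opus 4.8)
The plan is to reinterpret the two given maps as a single morphism to the surface $S := Y \times Z$ and to extract the inequality from the adjunction formula on $S$, using the Hodge index theorem to control one self-intersection number. Since the genus of a curve is a geometric invariant and $K$ is perfect, I would first extend scalars to $\overline{K}$: the genus is unchanged, and the no-common-factorization hypothesis persists because the compositum of the relevant function fields is stable under this separable base change. I will flag this reduction but not dwell on it, working thereafter as if $K = \overline{K}$.

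Form the product morphism $\pi = (\pi_Y, \pi_Z) \colon X \to S$ and let $D := \pi(X)$ be its image, a possibly singular curve on the smooth projective surface $S$. The key translation of the hypothesis is that $\pi$ is birational onto $D$: the function field of $D$ is exactly the compositum $\pi_Y^* K(Y) \cdot \pi_Z^* K(Z)$ inside $K(X)$, and a strict inclusion of this compositum in $K(X)$ would furnish a curve $X'$ with $K(X) \supsetneq K(X') \supseteq \pi_Y^*K(Y), \pi_Z^*K(Z)$, that is, a common factorization of degree $>1$, contrary to assumption. Consequently $X$ is the normalization of $D$, so $g(X) \leq p_a(D)$, and it suffices to bound the arithmetic genus $p_a(D)$.

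Next I would assemble the two ingredients of the adjunction formula $2p_a(D) - 2 = D^2 + D \cdot K_S$. Writing $f_1, f_2$ for the classes of the fibers of the two projections $p_1 \colon S \to Y$ and $p_2 \colon S \to Z$ (so that $f_1^2 = f_2^2 = 0$ and $f_1 \cdot f_2 = 1$), the restrictions $p_1|_D$ and $p_2|_D$ coincide under the birational identification with $\pi_Y$ and $\pi_Z$, giving $D \cdot f_1 = \deg \pi_Y = m$ and $D \cdot f_2 = \deg \pi_Z = n$. Since $K_S = p_1^* K_Y + p_2^* K_Z \equiv (2g(Y) - 2) f_1 + (2g(Z) - 2) f_2$, this yields $D \cdot K_S = (2g(Y) - 2)m + (2g(Z) - 2)n$.

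The crux is the self-intersection $D^2$. Decomposing $D \equiv n\,f_1 + m\,f_2 + \delta$ in the Néron–Severi group, where $\delta$ is orthogonal to the hyperbolic plane spanned by $f_1, f_2$ and the coefficients $n, m$ are forced by the intersection numbers just computed, one gets $D^2 = 2mn + \delta^2$. Here the Hodge index theorem intervenes: the intersection form is negative definite on the orthogonal complement of a class of positive self-intersection, so $\delta^2 \leq 0$ and hence $D^2 \leq 2mn$. Substituting into adjunction gives $2p_a(D) - 2 \leq 2mn + (2g(Y)-2)m + (2g(Z)-2)n$, which rearranges to $p_a(D) \leq m\,g(Y) + n\,g(Z) + (m-1)(n-1)$; combined with $g(X) \leq p_a(D)$ this is exactly the asserted inequality. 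I expect the self-intersection estimate to be the main obstacle, since it is the one step requiring a genuinely nontrivial input (the Hodge index theorem, together with a correct setup of $\mathrm{NS}(S)$); the alternative is Stichtenoth's purely function-field argument, which sidesteps surfaces but replaces this step with a delicate Riemann–Roch gap and dimension count.
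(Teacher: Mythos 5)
Your proof is correct, but it is not the paper's route: the paper gives no argument of its own and instead cites \cite[Theorem 3.11.3]{Stichtenoth09}, where the inequality is proved purely function-theoretically. Stichtenoth works intrinsically with the compositum $F = \pi_Y^*K(Y)\cdot \pi_Z^*K(Z)$ inside $K(X)$ and bounds $g(X)$ by a Riemann--Roch dimension count on suitably chosen Riemann--Roch spaces; this avoids any base change (perfectness of $K$ enters there to control constant-field extensions and keep the genus stable) and never leaves the category of function fields. You instead give the classical geometric proof: base-change to $\overline{K}$, map $X$ to the surface $S = Y\times Z$, use the non-factorization hypothesis --- correctly translated, via the same compositum, into birationality of $X$ onto its image $D$ --- and then combine adjunction $2p_a(D)-2 = D^2 + D\cdot K_S$ with the Hodge index theorem to get $D^2 \leq 2mn$ from the decomposition $D \equiv n f_1 + m f_2 + \delta$ with $\delta$ orthogonal to the hyperbolic plane spanned by the fiber classes (this orthogonal projection is legitimate since the form is nondegenerate on that plane, even though $\operatorname{NS}(S)$ may be larger). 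All the computations check out: $D\cdot f_1 = m$, $D\cdot f_2 = n$, $D\cdot K_S = m(2g(Y)-2)+n(2g(Z)-2)$, and the arithmetic rearranges to exactly \eqref{eq:CS} via $g(X) \leq p_a(D)$. Your two reduction points --- that the genus and the no-common-factorization hypothesis survive base change to $\overline{K}$ --- are genuine obligations and you discharge them correctly (the compositum generates $\overline{K}(X_{\overline{K}})$ once it equals $K(X)$); note these steps implicitly use that the curves are geometrically integral, which holds in the paper's setting. What each approach buys: Stichtenoth's proof is self-contained over any perfect field and is the reason the proposition is stated with that hypothesis; yours is conceptually shorter, works in any characteristic since the Hodge index theorem does, and as a bonus makes the defect visible as $-\delta^2$, so one can read off when equality forces $D$ to be numerically a sum of fiber classes.
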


Since $\C$ and $\Q$ are both perfect fields due to their characteristic 0, we can apply the Castelnuovo-Severi inequality to get lower bounds on both $\C$ and $\Q$-gonalities.

\begin{prop}\label{csprop}
    The modular curve $X_0(N)$ does not admit a degree $5$ morphism to $\PP^1$ for the following values of $N$. Here $g$ is the genus of the curve $X_0(N)$ and $\deg$ is the degree of the morphism from $X_0(N)$ to $Y$.

\begin{center}
\begin{longtable}{|c|c|c|c|c||c|c|c|c|c|}

\hline
\addtocounter{table}{-1}
$N$ & $g$ & $Y$ & $\deg$ & $g(Y)$ & $N$ & $g$ & $Y$ & $\deg$ & $g(Y)$\\
    \hline

    $46$ & $5$ & $X_0(46)/\left<w_{23}\right>$ & $2$ & $0$ & $59$ & $5$ & $X_0(59)/\left<w_{59}\right>$ & $2$ & $0$\\
    $60$ & $7$ & $X_0(60)/\left<w_{15}\right>$ & $2$ & $1$ & $62$ & $7$ & $X_0(62)/\left<w_{31}\right>$ & $2$ & $1$\\
    $66$ & $9$ & $X_0(66)/\left<w_{11}\right>$ & $2$ & $2$ & $69$ & $7$ & $X_0(69)/\left<w_{23}\right>$ & $2$ & $1$\\
    $70$ & $9$ & $X_0(70)/\left<w_{35}\right>$ & $2$ & $2$ & $71$ & $6$ & $X_0(71)/\left<w_{71}\right>$ & $2$ & $0$\\
    $78$ & $11$ & $X_0(78)/\left<w_{39}\right>$ & $2$ & $2$ & $83$ & $7$ & $X_0(83)/\left<w_{83}\right>$ & $2$ & $1$\\
    $87$ & $9$ & $X_0(87)/\left<w_{87}\right>$ & $2$ & $2$ &
    $89$ & $7$ & $X_0(89)/\left<w_{89}\right>$ & $2$ & $1$ \\ $92$ & $11$ & $X_0(92)/\left<w_{23}\right>$ & $2$ & $1$ &
    $94$ & $11$ & $X_0(94)/\left<w_{47}\right>$ & $2$ & $1$ \\ $95$ & $9$ & $X_0(95)/\left<w_{95}\right>$ & $2$ & $1$ &
    $101$ & $8$ & $X_0(101)/\left<w_{101}\right>$ & $2$ & $1$\\
    $104$ & $11$ & $X_0(104)/\left<w_{104}\right>$ & $2$ & $3$ &
    $107$ & $9$ & $X_0(107)/\left<w_{107}\right>$ & $2$ & $2$\\
    $111$ & $11$ & $X_0(111)/\left<w_{111}\right>$ & $2$ & $2$ &
    $119$ & $11$ & $X_0(119)/\left<w_{119}\right>$ & $2$ & $1$\\
    $131$ & $11$ & $X_0(131)/\left<w_{131}\right>$ & $2$ & $1$ &
    $141$ & $15$ & $X_0(141)/\left<w_{47}\right>$ & $2$ & $3$\\
    $142$ & $17$ & $\PP^1$ & $4$ & $0$ &
    $143$ & $13$ & $\PP^1$ & $4$ & $0$ \\
    $167$ & $14$ & $\PP^1$ & $4$ & $0$ &
    $191$ & $16$ & $\PP^1$ & $4$ & $0$ \\
 
    \hline

\caption*{}
\end{longtable}
\end{center}
\end{prop}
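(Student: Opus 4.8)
The plan is to argue by contradiction using the Castelnuovo--Severi inequality (\Cref{tm:CS}). Fix a level $N$ appearing in the table and suppose that $X_0(N)$ admits a degree $5$ morphism $\pi_Z \colon X_0(N) \to \PP^1$. Each row also records a second morphism $\pi_Y \colon X_0(N) \to Y$ of degree $m$, where either $m=2$ and $Y = X_0(N)/\langle w_d\rangle$ is the quotient by an Atkin--Lehner involution, or $m=4$ and $Y=\PP^1$ (the latter exists because the four relevant curves have $\Q$-gonality $4$ by \cite[Theorem 1.1]{NajmanOrlic22}). Taking $Z=\PP^1$ so that $g(Z)=0$ and $n=5$, the inequality of \Cref{tm:CS}, once its hypothesis is verified, reads
\[
g(X_0(N)) \leq m\cdot g(Y) + 5\cdot 0 + (m-1)(5-1) = m\cdot g(Y) + 4(m-1).
\]

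The key observation is that the no-common-factorization hypothesis of \Cref{tm:CS} holds automatically in every row. Indeed, if $\pi_Y$ and $\pi_Z$ both factored through a morphism $h \colon X_0(N) \to X'$ of degree $>1$, then $\deg h$ would divide both $m$ and $n=5$; since $m\in\{2,4\}$ is coprime to the prime $5$, this forces $\deg h \mid \gcd(m,5)=1$, a contradiction. Hence the Castelnuovo--Severi inequality applies unconditionally, and in particular I do not need to analyze the geometry of the hypothetical degree $5$ map beyond its degree.

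It then remains to check, row by row, that the displayed bound is actually violated. For the degree-$2$ rows one verifies $g(X_0(N)) > 2g(Y)+4$, and for the degree-$4$ rows one verifies $g(X_0(N)) > 4g(Y)+12$; both are immediate from the tabulated values, e.g.\ $N=46$ gives $5>4$ and $N=142$ gives $17>12$. Each such strict inequality contradicts the consequence of \Cref{tm:CS} derived above, ruling out the assumed degree $5$ morphism $X_0(N)\to\PP^1$.

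The genuinely substantive inputs are therefore not the Castelnuovo--Severi bookkeeping but the auxiliary data feeding into it: the existence and the genus of each Atkin--Lehner quotient $X_0(N)/\langle w_d\rangle$ (obtainable from Riemann--Hurwitz together with the fixed-point count of $w_d$), and the existence of the degree-$4$ maps to $\PP^1$ in the cases $N\in\{142,143,167,191\}$, which I would take from \cite[Theorem 1.1]{NajmanOrlic22}. Once these genera are confirmed, the coprimality remark eliminates the only nontrivial hypothesis of the inequality, so I expect the remaining effort to be the purely numerical verification of the strict inequalities above for all listed $N$.
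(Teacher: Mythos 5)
Your proposal is correct and follows essentially the same route as the paper's proof: both apply the Castelnuovo--Severi inequality (\Cref{tm:CS}) to the hypothetical degree $5$ map and the tabulated auxiliary map (an Atkin--Lehner quotient of degree $2$, or a degree $4$ map to $\PP^1$ from \cite{NajmanOrlic22} for $N\in\{142,143,167,191\}$), dismiss the common-factorization hypothesis via $\gcd(m,5)=1$, and derive the contradiction $g(X_0(N))\leq m\cdot g(Y)+4(m-1)$ from the table's genus data. Your write-up is, if anything, slightly more explicit than the paper's in spelling out why a common factor $h$ would have degree dividing both $m$ and $5$.
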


\begin{proof}
    Suppose that there exists a degree $5$ map $f$ from $X_0(N)$ to $\PP^1$. We apply the Castelnuovo-Severi inequality with $f$ and a degree $\deg$ rational morphism $\pi: X_0(N)\to Y$. For $N=142,143,167,191$, we know there is a degree $4$ rational morphism to $\PP^1$ by \cite[Theorem 1.2]{NajmanOrlic22} and for other levels $N$, $\pi$ is a degree $2$ projection map $X_0(N)\to X_0(N)/\left<w_d\right>$.
    
    Since in all cases $\deg=2,4$, we obviously have $\gcd(2,5)=1$ and these two maps clearly cannot both factor through a map of degree $>1$. Therefore, we must have \[g(X_0(N))\leq 5\cdot0+\deg\cdot g(Y)+4\cdot(\deg-1).\]
    
    However, $g(X_0(N))$ is too large in all cases here, and we get a contradiction. This means that there are no degree $5$ maps from these curves $X_0(N)$ to $\PP^1$.
\end{proof}

\begin{prop}\label{cspropellcurve}
    The modular curve $X_0(N)$ does not admit a degree $5$ morphism to an elliptic curve for the following values of $N$. Here $g$ is the genus of the curve $X_0(N)$ and $\deg$ is the degree of the morphism from $X_0(N)$ to $Y$.

\begin{center}
\begin{longtable}{|c|c|c|c|c||c|c|c|c|c|}

\hline
\addtocounter{table}{-1}
$N$ & $g$ & $Y$ & $\deg$ & $g(Y)$ & $N$ & $g$ & $Y$ & $\deg$ & $g(Y)$\\
    \hline

$174$ & $27$ & $X_0(174)/\left<w_{87}\right>$ & $2$ & $8$ & $184$ & $21$ & $X_0(184)/\left<w_{23}\right>$ & $2$ & $5$\\
$222$ & $35$ & $X_0(222)/\left<w_{111}\right>$ & $2$ & $10$ & $231$ & $29$ & $X_0(231)/\left<w_{231}\right>$ & $2$ & $9$\\
$248$ & $29$ & $X_0(248)/\left<w_{31}\right>$ & $2$ & $9$ & $249$ & $27$ & $X_0(249)/\left<w_{83}\right>$ & $2$ & $8$\\
$262$ & $32$ & $X_0(262)/\left<w_{131}\right>$ & $2$ & $9$ & $267$ & $29$ & $X_0(267)/\left<w_{89}\right>$ & $2$ & $9$\\
 
    \hline

\caption*{}
\end{longtable}
\end{center}
\end{prop}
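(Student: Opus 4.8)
The plan is to argue exactly as in the proof of \Cref{csprop}, but now taking the target of the hypothetical degree $5$ map to be an elliptic curve rather than $\PP^1$. Suppose, for contradiction, that for one of the listed levels $N$ there is a degree $5$ morphism $f\colon X_0(N)\to E$ onto an elliptic curve $E$. For the second map I would use the degree $2$ quotient $\pi\colon X_0(N)\to Y=X_0(N)/\langle w_d\rangle$ by the Atkin--Lehner involution $w_d$ recorded in the table.

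Before invoking the Castelnuovo--Severi inequality I would verify its hypothesis that $f$ and $\pi$ do not both factor through a common morphism $X_0(N)\to X'$ of degree $>1$. If they did, then the degree of $X_0(N)\to X'$ would divide both $\deg f=5$ and $\deg\pi=2$, hence would divide $\gcd(5,2)=1$, which is impossible. With the hypothesis in place, \Cref{tm:CS} applied with $m=\deg f=5$, $n=\deg\pi=2$, and $g(E)=1$ gives
\[
g(X_0(N))\ \leq\ 5\cdot 1+2\cdot g(Y)+(5-1)(2-1)\ =\ 9+2\,g(Y).
\]

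It then remains to observe that the genus $g=g(X_0(N))$ strictly exceeds $9+2g(Y)$ in every row of the table; for instance, for $N=174$ one has $g=27>9+2\cdot 8=25$, and the analogous strict inequality holds in all eight cases. This contradiction shows that no such degree $5$ map to an elliptic curve can exist. The only genuine input beyond this short argument is the knowledge of the genera $g(Y)$ of the Atkin--Lehner quotients $X_0(N)/\langle w_d\rangle$, which can be obtained from the standard genus formulas for such quotients (or read off from existing tables). I expect this bookkeeping of quotient genera, rather than the application of the inequality itself, to be the step requiring the most care, since one must select, for each $N$, an involution $w_d$ whose quotient genus is small enough to force the contradiction.
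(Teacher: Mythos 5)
Your proof is correct and is essentially identical to the paper's own argument: the authors likewise apply the Castelnuovo--Severi inequality to the hypothetical degree $5$ map to an elliptic curve together with the degree $2$ Atkin--Lehner quotient map from the table, note that $\gcd(5,2)=1$ rules out a common nontrivial factorization, and derive the same bound $g(X_0(N))\leq 5\cdot 1+2\cdot g(Y)+4$, which the tabulated genera violate in every case. Your verification of the strict inequality row by row is exactly the bookkeeping the paper leaves implicit.
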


\begin{proof}
    Suppose that there exists a degree $5$ map $f$ from $X_0(N)$ to an elliptic curve. We apply the Castelnuovo-Severi inequality with $f$ and a degree $2$ projection map $\pi: X_0(N)\to Y$. Since $\gcd(2,5)=1$, these two maps clearly cannot both factor through a map of degree $>1$. Therefore, we must have \[g(X_0(N))\leq 5\cdot1+2\cdot g(Y)+4\cdot1.\]
    
    However, $g(X_0(N))$ is too large in all cases here, and we get a contradiction. This means that there are no degree $5$ maps from these curves $X_0(N)$ to an elliptic curve.
\end{proof}

\begin{remark}
    Although in \Cref{quadraticformprop} we have already proved that for these levels $N$ the curve $X_0(N)$ has no morphisms of degree $5$ over $\Q$ to a positive rank elliptic curve (which is what we need to determine the minimum density degree), this result is interesting because it uses a different method and proves that there are no morphisms of degree $5$ over $\C$ to an elliptic curve (with no assumptions on the rank).
\end{remark}

\section{Finding curves with infinitely many degree 5 points}\label{infinitelydeg5section}

The goal of this section is to list as many values of $N$ as possible for which we know that $X_0(N)$ has infinitely many degree $5$ points.

\begin{thm}\label{infinitelymanyquinticthm} If the integer $N$ is one of the following values 
\[N\in\{1-45,47-58,61,63,64,65,67,68,72,73,75,80,81,91,109,121,125\},\]  
then $\Q(X_0(N))$ contains a function of degree $5$ and, in particular, the curve $X_0(N)$ has infinitely many degree $5$ points over $\Q$.
\end{thm}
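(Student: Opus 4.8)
The plan is to reduce the whole statement to the single task of producing, for each listed level $N$, a non-constant $\Q$-rational morphism $X_0(N)\to\PP^1$ of degree exactly $5$. Indeed, such a morphism is the same thing as a degree $5$ function in $\Q(X_0(N))$, and once it exists, case (1) of \Cref{translate_of_abelian_variety_thm} immediately gives infinitely many closed points of degree $5$. Throughout I would exploit that every $X_0(N)$ carries the rational cusp $\infty$, so there is always a $\Q$-rational point $P$ available to build functions from.

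First I would dispose of the small-genus levels directly. If $g(X_0(N))=0$, then $X_0(N)\cong\PP^1_\Q$ because it has a rational point, and $\Q(t)$ contains functions of every degree, in particular degree $5$. If $g(X_0(N))=1$, then $X_0(N)$ is an elliptic curve over $\Q$ with origin the cusp $P$; Riemann--Roch gives $\ell(5P)-\ell(4P)=1$, so $L(5P)$ contains a function with a pole of exact order $5$ at $P$, which has degree $5$. For $2\le g(X_0(N))\le 3$ I would use the cusp $P$ again: since $\deg(5P)=5\ge 2g-1$, Riemann--Roch forces $\ell(5P)-\ell(4P)=1$ (the only borderline case is $g=3$ with $4P\sim K$, where I would instead pick another rational point or fall back on the gonality construction below), producing a function whose pole divisor is exactly $5P$, hence of degree $5$.

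The substantive part is the remaining, higher-genus levels (such as $N=57,58,63,65,67,72,73,75,91,109,121,125$, all of genus $g\ge 4$). For these I would invoke the $\Q$-gonality computations of \cite[Theorem 1.1]{NajmanOrlic22}: each such $X_0(N)$ has $\Q$-gonality $\gamma\le 5$. When $\gamma=5$ (for example $N=109$) the gonality map is already a degree $5$ function and nothing more is needed. When $\gamma\le 4$, I would begin from a base-point-free gonality pencil $|E|$ with $\deg E=\gamma$ and $\ell(E)=2$, choose a rational point $P\notin\Supp E$, and pass to the complete linear system $|E+(5-\gamma)P|$, whose dimension is at least $2$. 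Any $\phi\in L(E+(5-\gamma)P)$ has degree at most $5$, with degree exactly $5$ precisely when its pole divisor equals all of $E+(5-\gamma)P$; such $\phi$ exists unless $E+(5-\gamma)P$ acquires a base point.

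Controlling these base points is where I expect the main difficulty, since for $g\ge 4$ a naive Riemann--Roch count no longer pins down the exact degree of a generic member of the twisted pencil. To settle this robustly I would use the explicit plane models and $q$-expansions produced by the method of \Cref{sec:computing_models}: on each model one can write down a candidate degree $5$ function (the gonality pencil twisted by the cusp, or directly a suitable ratio of $q$-expansions) and verify by a direct computation that its pole divisor has degree exactly $5$ and is defined over $\Q$. This last check also certifies that the pencil is genuinely $\Q$-rational rather than merely $\C$-rational. Combined with the uniform genus $\le 3$ and $\gamma=5$ arguments above, this case-by-case verification covers the entire list.
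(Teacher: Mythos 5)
Your proposal is correct and follows essentially the same strategy as the paper: everything is reduced to exhibiting a function of degree exactly $5$ in $\Q(X_0(N))$, the small-genus levels are handled by Riemann--Roch at the rational cusp $\infty$, and the remaining levels are settled by explicit computation on models of $X_0(N)$. (The paper passes from the degree $5$ function to infinitely many quintic points via Hilbert irreducibility rather than via case (1) of \Cref{translate_of_abelian_variety_thm}; both are valid since that theorem is stated for points of degree exactly $d$.)

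The differences are in the middle range of genera and in the computational candidate set. The paper treats the genus $3$ and $4$ levels uniformly (\Cref{genus34prop}) by verifying that $\infty$ is not a Weierstrass point, which yields $\ell(5\infty)-\ell(4\infty)=1$ even when $g=4$, where your inequality $5\geq 2g-1$ fails; so in your scheme the genus $4$ levels ($N=38,44,47,53,61,81$) fall into the computational bucket rather than being dispatched by a pure Riemann--Roch count. Your borderline genus $3$ worry is real: $N=64$ has genus $3$ but appears in the paper's computational proposition (\Cref{deg5mapprop}) rather than in \Cref{genus34prop}, consistent with the cusp trick failing there, and your proposed fixes (another rational point, or a twisted pencil) are what is needed. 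The more substantive caveat concerns your candidate family $E+(5-\gamma)P$ with $P$ a single rational point: base points can and do obstruct this, and the paper's actual search (\Cref{deg5mapprop}) had to range over \emph{all} $\Q$-rational effective degree $5$ divisors supported on rational cusps \emph{and quadratic points} (found as pullbacks of rational points under quotient maps $X_0(N)\to X_0(N)/\left<w_d\right>$), computing their Riemann--Roch spaces over $\Q$. So your recipe as stated is narrower than what was apparently required, though your fallback of verifying candidate functions on explicit models amounts to the same computation once the divisor search is broadened. Finally, the paper has a cleaner argument for $N=125$ than any search: the degeneracy map $X_0(125)\to X_0(25)\cong\PP^1$ is itself $\Q$-rational of degree $5$ (\Cref{prop125}); and for genus $\leq 2$ it cites a general lemma (\Cref{lem:deg_d_function}, valid for $\deg D>2g-1$) in place of your direct pole-order computation, which is an equivalent argument.
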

\begin{proof}For $N=109$ the degree $5$ function exists because of \cite[Theorem 1.3]{NajmanOrlic22}. For the remaining values of $N$, the degree $5$ functions are proved to exist in \Cref{genus2prop,genus34prop,prop125,deg5mapprop}, where we separated the levels in different propositions according to the argument that was used.
The existence of this degree $5$ function together with Hilbert's Irreducibility Theorem \cite[Proposition 3.3.5(2) and Theorem 3.4.1]{serre2016topics} implies the existence of infinitely many quintic points. \end{proof} 

\begin{lem}[{\cite[Lemma 3.1]{derickx2024primitivepoints}}]\label{lem:deg_d_function}
Let $C$ be a nice curve over a number field $K$. Suppose that $C$ has a $K$-rational divisor $D$ of degree $d$ with $d > 2g(C) - 1$. Then $K(C)$ contains a function of degree exactly $d$.
\end{lem}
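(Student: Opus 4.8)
The plan is to exhibit the degree-$d$ function as a rational function whose polar divisor is a chosen effective representative of the class of $D$. First I would use Riemann--Roch to arrange that $D$ is effective: since $\deg D = d > 2g(C)-1$ the divisor $D$ is nonspecial, so $\ell(D) = d - g(C) + 1 > 0$ and the complete linear system $|D|$ is nonempty. Picking a nonzero $h \in L(D)$ defined over $K$ and replacing $D$ by the linearly equivalent effective $K$-rational divisor $\Div(h)+D$ (which changes neither the degree $d$ nor the goal of producing \emph{some} function of degree $d$), I may assume from now on that $D \geq 0$. Here $L(E) := \{h \in K(C)^\times : \Div(h) + E \geq 0\} \cup \{0\}$ and $\ell(E) = \dim_K L(E)$, a quantity unaffected by base change to $\overline{K}$ since $H^0$ commutes with flat base change.

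The core observation is that for a nonzero $f \in L(D)$ the polar divisor $\Div_\infty(f)$ is automatically $\leq D$, and equals $D$ precisely when $f \notin L(D-Q)$ for every closed point $Q \in \Supp(D)$. Indeed, if $\Div_\infty(f) \lneq D$ then $D - \Div_\infty(f)$ is a nonzero effective divisor supported on $\Supp(D)$, hence it contains some closed point $Q$, and then $\Div_\infty(f) \leq D - Q$, i.e. $f \in L(D-Q)$; the converse is immediate. So it suffices to find $f \in L(D)$ avoiding the finitely many subspaces $L(D-Q)$ for $Q \in \Supp(D)$, since then $\deg f = \deg \Div_\infty(f) = \deg D = d$. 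To see that each $L(D-Q)$ is a \emph{proper} $K$-subspace, write $e = \deg Q \geq 1$; using $\ell(K_C - D) = 0$ (as $\deg(K_C - D) = 2g(C)-2-d < 0$), Riemann--Roch yields
\[
\ell(D) - \ell(D-Q) = e - \ell(K_C - D + Q).
\]
Because $\deg(K_C - D + Q) = 2g(C) - 2 - d + e \leq e - 2$ under the hypothesis $d \geq 2g(C)$, we get $\ell(K_C - D + Q) \leq e - 1$, so $\ell(D) - \ell(D-Q) \geq 1$ and the inclusion is strict. Since $K$ is a number field, hence infinite, the $K$-vector space $L(D)$ is not a union of finitely many proper subspaces, and the desired $f$ exists.

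The point requiring the most care is the interaction with $K$-rationality: the geometric points lying over a closed point $Q$ may be Galois-conjugate and individually irrational, so I work throughout with closed points and the genuinely $K$-rational spaces $L(D)$ and $L(D-Q)$, which is exactly why the avoidance-of-subspaces argument (rather than a pointwise base-point-freeness statement over $\overline{K}$) is the convenient formulation. The hypothesis $d > 2g(C)-1$ is used twice: once to make $D$ nonspecial, guaranteeing $\ell(K_C-D)=0$ and $|D|\neq\emptyset$, and once in its sharper integral form $d \geq 2g(C)$ to force every $L(D-Q)$ to be proper; this second use is essential, being the concrete manifestation of $|D|$ having no base points, and without it some $Q$ could be a base point and the argument would break. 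The only excluded regime $d \leq 0$ forces $g(C)=0$ and $d=0$, where a constant function trivially has degree $0$, so the statement holds there as well.
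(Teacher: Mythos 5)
Your proof is correct: the paper itself does not reprove this lemma but imports it from \cite[Lemma 3.1]{derickx2024primitivepoints}, and your argument is essentially the standard one underlying that reference. Making $D$ effective via Riemann--Roch (using $d>2g-2$ to kill $\ell(K_C-D)$), characterizing $\Div_\infty(f)=D$ as $f\in L(D)\setminus\bigcup_{Q\in\Supp D} L(D-Q)$, proving each $L(D-Q)$ proper by the Riemann--Roch computation $\ell(D)-\ell(D-Q)=\deg Q-\ell(K_C-D+Q)\geq 1$ (correctly handled for closed points of arbitrary residue degree, which is the right level of generality over $K$), and invoking that a vector space over an infinite field is not a finite union of proper subspaces, is exactly the expected base-point-freeness argument, with the $d=0$, $g=0$ edge case also correctly dispatched.
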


\begin{prop}\label{genus2prop}
    The modular curve $X_0(N)$ has a function of degree $5$ over $\Q$ for \[N\in\{1-29,31,32,36,37,49,50\}.\]
\end{prop}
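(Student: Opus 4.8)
The plan is to produce the required degree-$5$ function directly from \Cref{lem:deg_d_function}, which guarantees a function of degree exactly $d$ in $\Q(C)$ as soon as $C$ carries a $\Q$-rational divisor of degree $d > 2g(C)-1$. Taking $d=5$, the hypothesis of that lemma becomes $5 > 2g(X_0(N))-1$, i.e.\ $g(X_0(N)) \leq 2$. Thus the whole proposition reduces to two routine tasks: checking that every level in the list has genus at most $2$, and exhibiting a $\Q$-rational divisor of degree $5$.

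First I would verify the genus bound. Using the standard genus formula for $X_0(N)$ (or the known tables of small-genus levels), the levels of genus $0$ are $N\in\{1,\dots,10,12,13,16,18,25\}$, the levels of genus $1$ among those listed are $N\in\{11,14,15,17,19,20,21,24,27,32,36,49\}$, and the levels of genus $2$ are $N\in\{22,23,26,28,29,31,37,50\}$. These three sets together are exactly $\{1\text{-}29,31,32,36,37,49,50\}$, so indeed $g(X_0(N))\leq 2$ for every $N$ in the statement, whence $2g(X_0(N))-1\leq 3 < 5$.

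Second, I would produce the divisor. The cusp $\infty$ is a $\Q$-rational point of $X_0(N)$ for every $N$, so $D := 5\,(\infty)$ is a $\Q$-rational effective divisor of degree $5$. Since $\deg D = 5 > 2g(X_0(N))-1$, \Cref{lem:deg_d_function} applies and yields a function in $\Q(X_0(N))$ of degree exactly $5$, as claimed.

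There is no genuine obstacle in this case: it is the base of the classification, on which the later propositions (handling higher genus, where a single rational point no longer suffices) build. The only point requiring attention is the bookkeeping that the listed set of levels coincides precisely with the genus $\leq 2$ levels below $51$, which is a finite check.
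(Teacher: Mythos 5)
Your proof is correct and is essentially the paper's own argument: both use the $\Q$-rational divisor $5\infty$ together with \Cref{lem:deg_d_function} and the fact that every level in the list has $g(X_0(N))\leq 2$, so $5 > 2g-1$. Your explicit enumeration of the genus $0$, $1$, and $2$ levels is accurate and merely spells out the bookkeeping the paper leaves implicit.
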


\begin{proof}
The divisor $5\infty$ is a divisor of degree $5$ on $X_0(N)_\Q$. Furthermore, all these curves are of genus $g\leq2$. Therefore, $\Q(X_0(N))$ contains a function of degree exactly $5$ for all these values of $N$ by \Cref{lem:deg_d_function}.
\end{proof}

\begin{prop}\label{genus34prop}
    The modular curve $X_0(N)$ has a function of degree $5$ over $\Q$ for \[N\in\{30,33,34,35,38,39,40,41,43,44,45,47,48,53,61,81\}.\]
\end{prop}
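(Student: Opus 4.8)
The plan is to produce, for each of these $N$, an explicit function in $\Q(X_0(N))$ of degree exactly $5$, which is all that \Cref{genus34prop} asserts. These curves all have genus $g \in \{3,4\}$, so the divisor $5\infty$ no longer satisfies the hypothesis $\deg D > 2g-1$ of \Cref{lem:deg_d_function}, and a direct application is unavailable. Instead I would work with the $\Q$-rational cusp $\infty$ and the $\Q$-rational effective divisor $D = 5\infty$ of degree $5$, and analyze it via Riemann--Roch. The key reduction is that a nonconstant $f \in L(5\infty)$ has pole divisor $(f)_\infty = m\infty$ with $m \leq 5$, and its degree as a morphism $X_0(N) \to \PP^1$ equals $m$; thus $f$ has degree exactly $5$ precisely when $f \notin L(4\infty)$. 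Since $\Q$ is infinite, such an $f$ exists over $\Q$ if and only if $\ell(5\infty) > \ell(4\infty)$, i.e. precisely when $5$ is a pole number (non-gap) of the cusp $\infty$.

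For $g = 3$ this is quite rigid: since $\deg(5\infty) = 5 > 2g-2 = 4$, the divisor $5\infty$ is nonspecial and $\ell(5\infty) = 3$, while $\ell(4\infty) = 2 + \ell(K-4\infty) \in \{2,3\}$, the value $3$ occurring exactly when $4\infty \sim K$ (a hyperflex, equivalently a hyperelliptic Weierstrass point, at $\infty$). Hence a degree-$5$ function with a pole only at $\infty$ exists as soon as $4\infty \not\sim K$. For $g = 4$ the analogous computation gives $\ell(5\infty) = 2 + \ell(K-5\infty)$ and $\ell(4\infty) = 1 + \ell(K-4\infty)$, so the non-gap condition becomes $\ell(K-5\infty) = \ell(K-4\infty)$, which I would verify directly.

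Concretely, I would carry out these $\ell(\,\cdot\,)$ computations on the explicit plane models and $q$-expansions produced in \Cref{sec:computing_models}: computing $L(5\infty)$ and checking that it contains a function with a pole of order exactly $5$ at $\infty$ (for instance an eta quotient, or a ratio of the chosen weight-$2$ forms whose $q$-expansion begins at $q^{-5}$). The main obstacle is exactly the exceptional situation in which $\infty$ is a Weierstrass point with $5$ a gap (for $g=3$, the hyperflex condition $4\infty \sim K$). For any level where this occurs, I would replace $5\infty$ by another $\Q$-rational degree-$5$ divisor $D$ supported on several rational cusps, chosen so that $\ell(D-Q) < \ell(D)$ for every point $Q$ in its support, which again guarantees a function whose pole divisor is exactly $D$. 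Such a $D$ can be found because the obstruction $D - Q \sim K$ pins $Q$ down uniquely within a fixed support (if $D-Q_1 \sim K \sim D-Q_2$ then $Q_1 \sim Q_2$, forcing $Q_1 = Q_2$ on a curve of genus $\geq 1$), so it is avoidable once the divisor is spread over the available rational cusps; the final choice is verified on the explicit model.
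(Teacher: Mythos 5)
Your proposal is correct and follows essentially the same route as the paper: the paper also works with the divisor $5\infty$ and verifies computationally that the cusp $\infty$ is not a Weierstrass point, so that for $g\in\{3,4\}$ the gap sequence at $\infty$ is $1,\dots,g$, giving $\ell(5\infty)=5-g+1>\ell(4\infty)=4-g+1$ and hence a $\Q$-rational function with polar divisor exactly $5\infty$. Your reduction to the precise non-gap criterion $\ell(5\infty)>\ell(4\infty)$, and your fallback divisor supported on several rational cusps, are slightly more refined than needed but entirely consistent with the paper's argument.
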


\begin{proof}
    These curves are of genus $3$ or $4$, and we have computed that for all of them the infinity cusp $\infty$ is not a Weierstrass point. This implies that \[\ell(4\infty)=4-g+1, \ \ell(5\infty)=5-g+1.\]
    Therefore we conclude that there is a degree $5$ morphism to $\PP^1$ whose polar divisor is equal to $5\infty$.
\end{proof}

\begin{prop}\label{prop125}
The modular curve $X_0(125)$ has a function of degree $5$ over $\Q$.
\end{prop}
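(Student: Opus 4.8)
The plan is to avoid constructing a degree-$5$ function by hand and instead exploit the prime-power structure of the level $125 = 5^3$. The key observation is that the intermediate modular curve $X_0(25)$ has genus $0$, and since it carries the rational cusp $\infty$, it is isomorphic to $\PP^1$ over $\Q$. This strongly suggests routing through a degeneracy map rather than, say, a Riemann--Roch computation on $5\infty$ as was done for the lower-genus levels in \Cref{genus34prop} (indeed $g(X_0(125)) = 8$, so $5\infty$ is far from the regime where \Cref{lem:deg_d_function} applies).

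First I would record the genus computation $g(X_0(25)) = 0$ together with the existence of the rational cusp $\infty$, which produces a $\Q$-isomorphism $\iota \colon X_0(25) \xrightarrow{\sim} \PP^1$. Next I would invoke the natural degeneracy map $\pi \colon X_0(125) \to X_0(25)$ induced by the inclusion of congruence subgroups $\Gamma_0(125) \subseteq \Gamma_0(25)$; this map is defined over $\Q$, and its degree equals the index $[\Gamma_0(25) : \Gamma_0(125)] = \psi(125)/\psi(25) = 150/30 = 5$, where $\psi$ is as in \Cref{Ogg}. Composing, the map $\iota \circ \pi \colon X_0(125) \to \PP^1$ is a non-constant $\Q$-morphism of degree $5$, so the pullback of an affine coordinate on $\PP^1$ is a function in $\Q(X_0(125))$ of degree exactly $5$, as desired.

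I do not expect a serious obstacle in this case: the construction reduces to two routine checks, namely (i) that $X_0(25) \cong_\Q \PP^1$, which is immediate from genus $0$ plus a rational cusp, and (ii) that the degeneracy map has degree exactly $5$, which follows from the index formula. Because the morphism to $\PP^1$ is itself of degree $5$, the resulting function has degree exactly $5$ with no further Weierstrass-gap or Riemann--Roch analysis needed. This clean route is available precisely because $125$ sits two $5$-power levels above the genus-$0$ curve $X_0(25)$, which is why this level is isolated in its own proposition rather than folded into the others.
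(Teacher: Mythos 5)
Your proof is correct and takes essentially the same route as the paper: the paper likewise composes the $\Q$-rational degree-$5$ degeneracy map $X_0(125)\to X_0(25)$ (citing \cite[Proposition 4.3]{NajmanOrlic22} for the degree and rationality, which you instead verify directly via the index computation $\psi(125)/\psi(25)=150/30=5$) with the isomorphism $X_0(25)\cong\PP^1$ over $\Q$ coming from genus $0$ and a rational cusp. There are no gaps.
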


\begin{proof}
    We have a degeneracy map from $X_0(125)$ to $X_0(25)$. It is of degree $5$ and is defined over $\Q$ by \cite[Proposition 4.3]{NajmanOrlic22}. The curve $X_0(25)$ is isomorphic to $\PP^1$ since it has genus $0$ and has a rational point. Therefore this degeneracy map defines a function of degree $5$.
\end{proof}

\begin{prop}\label{deg5mapprop}
The modular curve $X_0(N)$ has a function of degree $5$ over $\Q$ for \[N\in\{42,51,52,54-58,63,64,65,67,68,72,73,75,80,91,121\}.\]
\end{prop}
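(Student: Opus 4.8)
The plan is to exhibit, for each of the twenty levels $N$ in the list, an explicit $\Q$-rational function of degree exactly $5$ on $X_0(N)$. The easy Riemann--Roch route is already exhausted: by \Cref{lem:deg_d_function} the divisor $5\infty$ produces a degree-$5$ function only when $5>2g-1$, and the borderline genus $3$ and $4$ cases with $\infty$ non-Weierstrass were handled separately, so every remaining curve has genus $\geq 3$ and either genus $\geq 5$ or $\infty$ a Weierstrass point. The first thing I would try is the degeneracy maps: whenever $N$ has a divisor $M$ with $g(X_0(M))=0$ and index ratio $\psi(N)/\psi(M)=5$, the degeneracy map $X_0(N)\to X_0(M)\cong\PP^1$, which is defined over $\Q$ by \cite[Proposition 4.3]{NajmanOrlic22}, is exactly the desired degree-$5$ function. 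A check of the $\psi$-indices shows that a degree-$5$ degeneracy map to a genus-$0$ modular curve exists for very few (if any) of the current levels, so for the remainder I would pass to explicit models.

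Second, following the method of \Cref{sec:computing_models}, I would compute a plane model $F_N(X,Y)=0$ from the $q$-expansions of weight-$2$ cusp forms via \cite[Algorithm 4.2]{Jeon2022}, and then search for a modular function $h$ --- a ratio of cusp forms, a coordinate of the model, or a function realizing a relation in the finite cuspidal divisor class group --- whose polar divisor has degree $5$. Since the polar divisor of such an $h$ is supported on cusps, its degree can be read off directly from the orders of vanishing of the $q$-expansion at each cusp, making the degree computation completely explicit. For the genus-$6$ levels one has the additional option of producing a birational degree-$5$ plane model and projecting from a $\Q$-rational point off the image curve, which automatically yields a degree-$5$ morphism to $\PP^1$; the higher-genus levels, where a plane quintic is impossible, must instead be settled by exhibiting a $g^1_5$ directly.

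The main obstacle, and the step needing the most care, is proving that the degree is \emph{exactly} $5$ rather than a proper divisor of it. Several of these curves are tetragonal, so a careless choice of degree-$5$ divisor would collapse to the $g^1_4$ plus a base point and give degree $4$; indeed the Castelnuovo--Severi argument of \Cref{csprop} shows that for many nearby levels no degree-$5$ map to $\PP^1$ exists at all, which is precisely why the existence for these twenty levels is special and must be witnessed by an explicit function. I would settle this by computing the full polar divisor of each candidate $h$ from its $q$-expansion and confirming, via Riemann--Roch on the explicit model, that the associated linear system is a base-point-free, non-composed $g^1_5$. The remaining bookkeeping --- checking rationality of the cusps involved and that each $h$ lies in $\Q(X_0(N))$ --- is routine and can be recorded in a table of level-versus-function pairs together with the supporting computations.
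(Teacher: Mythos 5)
Your proposal has a genuine gap at its computational core: you restrict the search to functions whose polar divisor is supported on \emph{cusps}, and this provably fails for some of the levels in the list. Take $N=67$ or $N=73$, both of which appear in the statement. These are prime levels, so $X_0(N)$ has exactly two cusps, $0$ and $\infty$, and by the classical result of Ogg and Mazur the class of $(0)-(\infty)$ in the cuspidal divisor class group has order $\operatorname{num}\left(\frac{N-1}{12}\right)$, i.e.\ $11$ for $N=67$ and $6$ for $N=73$. Hence every nonconstant function with cuspidal polar divisor on $X_0(67)$ has degree at least $11$, and at least $6$ on $X_0(73)$; no degree-$5$ function of the type you search for exists on these curves. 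Relatedly, your claim that the polar divisor of a ratio of cusp forms, or of a coordinate of the model, is supported on cusps is simply false --- zeros of weight-$2$ cusp forms are in general non-cuspidal points, so the degree of such a ratio cannot be read off from $q$-expansions at the cusps (this only works for modular units). Your fallback options do not repair this: a degree-$5$ degeneracy map to a genus-$0$ curve exists for none of these levels (the only such case, $X_0(125)\to X_0(25)$, is handled in \Cref{prop125}; for $N=75$ the degree-$5$ degeneracy map lands on the rank-$0$ elliptic curve $X_0(15)$, which is useless), and the plane-quintic projection trick requires the curve to \emph{be} a smooth plane quintic, which is a special condition on genus-$6$ curves that you neither verify nor have reason to expect here.

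The missing idea, which is exactly what the paper's proof supplies, is to enlarge the support of the candidate degree-$5$ divisors beyond the cusps: the paper finds finitely many \emph{quadratic} points as preimages of rational points under the degree-$2$ quotient maps $X_0(N)\to X_0(N)/\left<w_d\right>$, then runs through all $\Q$-rational effective divisors $D$ of degree $5$ supported on the rational cusps and these quadratic points, computing each Riemann--Roch space over $\Q$ in \texttt{Magma} and checking whether it contains a function of degree exactly $5$ (your concern about the degree collapsing to a divisor of $5$ is legitimate and is indeed the reason the verification is done function by function; since $\gcd(4,5)=1$ the degree cannot literally factor through the $g^1_4$, but a function in $L(D)$ can have degree $<5$ if $D$ is not its exact polar divisor). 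Without non-cuspidal support your search space is empty for at least $N=67,73$, so the proposal as written cannot prove the proposition.
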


\begin{proof}
    In all these cases, using Magma, we found a rational degree $5$ morphism to $\PP^1$ whose polar divisor is supported on rational cusps and quadratic points.

    We searched for quadratic points as inverse images of rational points on quotient curves $X_0(N)/\left<w_d\right>$. Following that, we computed the Riemann-Roch spaces for all $\Q$-rational effective divisors of degree $5$ supported on the finitely many points of degrees 1 and 2 that we found. Since these divisors are defined over $\Q$, Magma also computes their Riemann-Roch spaces over $\Q$ \cite{magma}, ensuring that the functions found in these Riemann-Roch spaces are also defined over $\Q$. For each curve, inside at least one Riemann-Roch space, we found a function of degree $5$. This strategy for finding degree 5 functions is implemented in the file \github{deg_5_function_search.m}, and the actual search for the aforementioned levels is done in \github{deg_5_functions.m}.
\end{proof}

\section{Modular curves with infinitely many degree 5 points.}\label{deg5pointssection}

\begin{proof}[Proof of \Cref{quinticcor}]
    The levels for which the curve $X_0(N)$ has only finitely many points of degree $\leq4$ have been solved by \Cref{densitydeg5thm}. The remaining levels are 
    \begin{align*}
        N\in\{&1-75,77-83,85-89,91,92,94-96,98-101,103,104,107,111,\\
        &118,119,121,123,125,128,131,141-143,145,155,159,167,191\}.
    \end{align*}

    In \Cref{infinitelydeg5section} we proved that for \[N\in\{1-45,47-58,61,63,64,65,67,68,72,73,75,80,81,91,109,121,125\}\] the curve $X_0(N)$ has infinitely many degree $5$ points. For \[N\in\{46,59,60,62,66,69,70,71,78,87,94,95,104,119\}\] the Jacobian $J_0(N)$ has rank $0$ over $\Q$ by \cite[Theorem 3.1 (1)]{Deg3Class}. Also, \Cref{csprop} proves that there are no degree $5$ morphisms from $X_0(N)$ to $\PP^1$ in these cases. Therefore, by \Cref{translate_of_abelian_variety_thm}, it follows that for these levels $N$, the curve $X_0(N)$ has only finitely many degree $5$ points.
\end{proof}

\subsection{Overview of the open cases}

\Cref{quinticcor} makes significant progress in determining the curves $X_0(N)$ with infinitely many quintic points. However, there are still 30 cases listed in \Cref{rem:remain-cases} that need to be considered. 


The main problem in studying quintic points on these curves is that all these curves have infinitely many quartic points. This makes it harder to study the infinitude of quintic points since it is no longer sufficient to determine whether $\Sym^5 X_0(N)(\Q)$ is infinite or not. This is because $\Sym^5 X_0(N)(\Q)$ will also contain infinitely many points coming from $X_0(N)(\Q) \times \Sym^4 X_0(N)(\Q)$, which are not degree $5$ points. Note that in certain cases, like $N=131$, there are also infinitely many points coming from $\Sym^2 X_0(N)(\Q) \times \Sym^3 X_0(N)(\Q)$ that we need to exclude. This problem did not appear in the determination of the $X_0(N)$ with infinitely many quartic points, since all the curves with infinitely many points of degree $<4$ also have infinitely many degree $4$ points.

Yet another problem caused by the infinitude of the degree $4$ points is that it means that $\min(\delta(X_0(N)/\Q)) \leq 4$. Therefore, we cannot use the results from \cite{KV2025}, since the techniques of that paper only focus on determining the smallest degree for which a curve has infinitely many points of that degree. To be more precise, two of the main tools, \Cref{KVthm} and \Cref{kadetsvogt_prop_DF}, are not available when studying points of degree larger than $\min(\delta(X_0(N)/\Q))$. An extension of the results of \cite{KV2025} to degrees larger than $\min(\delta(X_0(N)/\Q))$ would be very helpful to dealing with the larger genus cases in \Cref{positive_rank_AV_dim12_table}. 

Now we split the discussion into two cases according to the (analytic) rank of $J_0(N)$.

\textbf{1. the rank $0$ case: } \[N\in\{96,98,100\}\]

    For these levels, the rank of $J_0(N)$ is zero by \cite[Theorem 3.1 (1)]{Deg3Class}. Therefore, by \Cref{translate_of_abelian_variety_thm} the curve $X_0(N)$ will have infinitely many quintic points if and only if there exists a degree $5$ morphism to $\PP^1$. In principle, determining the existence of this degree $5$ morphism is a finite computation if we could find a set of generators of $J_0(N)(\Q)$. However, we were not able to find these generators. Additionally, the Castelnuovo-Severi inequality (\Cref{tm:CS}) does not work in these cases. We ran the Magma code that we used to search for degree $5$ functions in \Cref{deg5mapprop}. This search did not find any degree 5 functions, leaving the possibility that no degree $5$ function exists in these $3$ cases.

\textbf{2. the positive rank case: } \begin{align*}
        N\in\{&74,77,79,82,83,85,86,88,89,92,99,101,103,107,111,118,\\
        &123,128,131,141,142,143,145,155,159,167,191\}
    \end{align*}

    For these levels, we have recorded information relevant to the study of the degree $5$ points on the curve $X_0(N)$ in \Cref{positive_rank_AV_dim12_table}. First, we will discuss what impact this information has on determining the degree $5$ points, and a justification for the information in this table is given in \Cref{sec:table-explanation}.

    \Cref{translate_of_abelian_variety_thm} gives two different possibilities for the existence of infinitely many degree $5$ points. Either these points come from a function of degree $5$ or they come from a certain translate of a positive rank abelian variety in $W_5^0 X_0(N).$

    As can be seen from \Cref{positive_rank_AV_dim12_table}, we can rule out the existence of this degree $5$ function in most cases. However, for  \[N\in\{74,77,79,85,88,103\}\] we were not able to prove or disprove the existence of a degree $5$ function. So this is one problem that needs to be solved in these $6$ cases.

    For these levels, the analytic rank of $J_0(N)$ is positive \cite[Theorem 3.1]{Deg3Class}. So, unless we encounter a counterexample to the BSD conjecture, this means that we need to consider the second possibility of  \Cref{translate_of_abelian_variety_thm} regarding translates of positive rank abelian varieties as well. That is, we either must rule out the possibility of $W_5^0 X_0(N)$ containing a translate of a positive rank abelian variety parameterizing infinitely many degree $5$ points, or we need to somehow exhibit a translate of a positive rank abelian variety $W_5^0 X_0(N)$ that gives rise to infinitely many degree $5$ points. 

    If $W_5^0 X_0(N)$ contains a translate of a positive rank abelian variety parametrizing infinitely many degree $5$ points, then it will be difficult to find it. Indeed, \Cref{thm:pos_rank_penta} shows that there are no positive rank pentaelliptic modular curves, so we cannot find it by writing down an explicit map of degree $5$ to an elliptic curve. Another possibility to find this translate is by showing that $X_0(N)$ is a degree $5$ Debarre-Fahlaoui curve. But even if $X_0(N)$ happens to be a degree $5$ Debarre-Fahlaoui curve for some value of $N$, the authors still do not know how to show this. On top of that, there could still be other reasons why $W_5^0 X_0(N)$ contains a translate of a positive rank abelian variety parametrizing infinitely many degree $5$ points, even when $g(X_0(N))>8$ since \Cref{kadetsvogt_prop_DF} is not applicable.

    On the other hand, if $W_5^0 X_0(N)$ does not contain a translate of a positive rank abelian variety parametrizing infinitely many degree $5$ points, then it will also be difficult to show this. The difficulty of this depends on how many simple positive rank abelian varieties are contained in $J_0(N)$.

    If $J_0(N)$ contains a single simple positive rank abelian variety $A$, then there are only finitely many translates of $A$ in $\Pic^5 X_0(N)$. Indeed, every translate of $A$ in $\Pic^5 X_0(N)$ is of the form $5\infty + P + A$ where $P$ is a coset representative in the finite set $J_0(N)(\Q)/A(\Q)$. Therefore, if one can compute representatives of $J_0(N)(\Q)/A(\Q)$, then in theory one could study the different translates of $A$ in $\Pic^5 X_0(N)$ on a case by case basis. So, for each of them one could either try to show that it is not contained in $W_5^0$ using techniques similar to those in \Cref{cor_112_117} or that it comes from adding a rational point on $X_0(N)$ to a translate in $W_4^0$. However, the latter task would also involve actually determining all translates of $A$ in $W_4^0$, which is not an easy task, especially since in most cases, $W_4^0$ will actually contain a translate of $A$ according to the last column of \Cref{positive_rank_AV_dim12_table}.

    If $J_0(N)$ contains multiple simple positive rank abelian varieties $A$, then we are even worse off. Indeed, let $A$ and $A'$ be different, but possibly isogenous, positive rank abelian varieties inside $J_0(N)$. Then there are a lot of cases where we know that a translate $A+P$ of $A$ is contained in $W_4^0 X_0(N)$, as can be seen in \cref{positive_rank_AV_dim12_table}. In these cases, reducing modulo some prime $p$ as in \Cref{positive_rank_AV_dim12_table} is unlikely to yield any useful information. Indeed, in this case $(A+P+\infty)_{\F_p}$ is contained in $W_5^0 X_0(N)_{\F_p}$. So we would need to somehow show that  $(A+P+\infty)$ is the only translate of $A$ contained in $W_5^0 X_0(N)$ among the infinitely many translates of $A$ in $\Pic^5 X_0(N)$ specializing to $(A+P+\infty)_{\F_p}$. 

\begin{center}
\begin{longtable}{|c|c|c|c|c|c|c|} 

\caption{Simple abelian varieties of positive analytic rank inside $J_0(N)$ for the remaining levels. The data explanation is in \Cref{sec:table-explanation}.}\label{positive_rank_AV_dim12_table}\\
\hline
$N$ & genus & deg. $5$ function & $A$ & $\dim$ & multiplicity & $\subseteq W_4^0 X_0(N)$\\
    \hline

$74$  &  8 &  ? &  37.2.a.a & $1$ & $2$ & yes$^*$\\
$77$  &  7 &  ? &  77.2.a.a & $1$ & $1$ & yes$^\dagger$\\
$79$  &  6 &  ? &  79.2.a.a & $1$ & $1$ & yes$^\dagger$\\
$82$  &  9 & no &  82.2.a.a & $1$ & $1$ & yes$^\dagger$\\
$83$  &  7 & no &  83.2.a.a & $1$ & $1$ & yes$^\dagger$\\
$85$  &  7 &  ? &  85.2.a.b & $2$ & $1$ & ?  \\
$86$  & 10 & no &  43.2.a.a & $1$ & $2$ & yes$^*$\\
$88$  &  9 &  ? &  88.2.a.a & $1$ & $1$ & ?  \\
$89$  &  7 & no &  89.2.a.a & $1$ & $1$ & yes$^\dagger$\\
$92$  & 10 & no &  92.2.a.a & $1$ & $1$ & ?  \\
$99$  &  9 & no &  99.2.a.a & $1$ & $1$ & yes$^\dagger$\\
$101$ &  8 & no & 101.2.a.a & $1$ & $1$ & yes$^\dagger$\\
$103$ &  8 &  ? & 103.2.a.a & $2$ & $1$ & yes$^+$\\
$107$ &  9 & no & 107.2.a.a & $2$ & $1$ & yes$^+$\\
$111$ & 11 & no & 37.2.a.a  & $1$ & $2$ & yes$^*$\\
$118$ & 14 & no & 118.2.a.a & $1$ & $1$ & yes$^\dagger$\\
$123$ & 13 & no & 123.2.a.a & $1$ & $1$ & ?  \\
      &    &    & 123.2.a.b & $1$ & $1$ & yes$^\dagger$\\
$128$ &  9 & no & 128.2.a.a & $1$ & $1$ & yes$^\dagger$\\
$131$ & 11 & no & 131.2.a.a & $1$ & $1$ & yes$^\dagger$\\
$141$ & 15 & no & 141.2.a.a & $1$ & $1$ & ?  \\
      &    &    & 141.2.a.d & $1$ & $1$ & yes$^\dagger$\\
$142$ & 17 & no & 141.2.a.a & $1$ & $1$ & yes$^\dagger$\\
      &    &    & 141.2.a.d & $1$ & $1$ & ?  \\
$143$ & 13 & no & 143.2.a.a & $1$ & $1$ & yes$^\dagger$\\
$145$ & 13 & no & 145.2.a.a & $1$ & $1$ & yes$^\dagger$\\
      &    &    & 145.2.a.b & $2$ & $1$ & ?  \\
$155$ & 15 & no & 155.2.a.a & $1$ & $1$ & ?  \\
      &    &    & 155.2.a.c & $1$ & $1$ & yes$^\dagger$\\
$159$ & 17 & no & 53.2.a.a  & $1$ & $2$ & yes$^*$\\
$167$ & 14 & no & 167.2.a.a & $2$ & $1$ & yes$^+$\\
$191$ & 16 & no & 191.2.a.a & $2$ & $1$ & yes$^+$\\

\hline
\end{longtable}
\end{center}

\begin{remark}
    All abelian varieties $A$ in the table have analytic rank equal to their dimension. We deduced this from the analytic ranks of the newform data in the LMFDB. If $A$ is an elliptic curve, then this is because $L(A,s)=L(f,s)$, where $f$ is the newform corresponding to $A$. If $A$ has dimension $2$, then it corresponds to a Galois orbit of two Galois conjugate newforms $f$ and $f'$. The L-function of $A$ is the product $L(f,s)L(f',s)$.
\end{remark}

\subsection{Justification for \texorpdfstring{\Cref{positive_rank_AV_dim12_table}}{the open case Table}.}
\label{sec:table-explanation}
Below we describe how we obtained the information in the different columns of \Cref{positive_rank_AV_dim12_table}.

    {\bf deg. 5 function.} This column records whether $X_0(N)$ has a function of degree $5$ over $\Q$. As we can see, for some levels in the table, we were able to prove that there are no such functions. For \[N\in\{83,89,92,101,107,111,131,141,142,143,167,191\}\] this is due to \Cref{csprop} and for \[N\in\{82,86,99,118,123,128,145,155,159\}\] this is because the $\Q$-gonality of $X_0(N)$ is at least $6$. However, for \[N\in\{74,77,79,85,88,103\}\] the $\Q$-gonality is equal to $4$ and Castelnuovo-Severi inequality does not give a contradiction, so we do not know whether a degree $5$ function exists. The Magma code that we used to search for a degree 5 function in \Cref{deg5mapprop} did not find such a function in all these cases. This suggests that there might not be a degree 5 function in at least some of these cases.

    {\bf $\bm{A}$, dim, and multiplicity.} The information in these columns comes from the LMFDB. In the $A$ column, we list the isogeny classes of simple abelian varieties $A$ with positive analytic rank  inside $J_0(N)$. These abelian varieties are given by the LMFDB label of the corresponding Galois orbit of newforms. The \textit{dim} column records the dimension of $A$. The \textit{multiplicity} column lists the largest integer $n$ such that $A^n$ is an isogeny factor of $J_0(N)$. This integer is just the number of divisors of $N/level(A)$, as follows from \cite[Theorem 9.4]{stein07}.

    {\bf $\bm{\subseteq W_4^0 X_0(N)}$.} In this column, we record whether there exists an abelian variety $A'$ in the isogeny class of $A$ that is contained in $W_4^0 X_0(N)$. In the cases where we know this, it is due to one of the following reasons.
    \begin{itemize}
        \item $A$ is an elliptic curve of conductor $N$ and modular degree $\leq 4$. In this case we get an $A' \subset W_4^0 X_0(N)$ in the isogeny class of $A$ because we have a degree $4$ map $X_0(N) \to A$. The cases where this argument applies have been labeled by a $^\dagger$.
        \item $A$ is an elliptic curve of conductor $<N$ occurring with multiplicity $2$, $X_0(N) \to X_0^*(N)$ is of degree $4$, and $A$ is isogenous to $J^*_0(N)$. In this case we get an $A' \subset W_4^0 X_0(N)$ in the isogeny class of $A$ because we again have a degree $4$ map $X_0(N) \to A$. The cases where this argument applies have been labeled by a $^*$.
        \item $A$ has dimension $2$ and is isogenous to $J_0^+(N)$. In this case $X_0(N)^+$ is of genus $2$ so that $W_2 X_0(N)^+ = \Pic^2 X_0(N)^+$. Pulling back along the morphism $f:X_0(N) \to X_0(N)^+$ of degree two gives $f^*(\Pic^2 X_0(N)^+) = f^*(W_2 X_0(N)^+) \subseteq W_4 X_0(N)^+$. The cases where this argument applies have been labeled by a $^+$.
    \end{itemize}
\bibliographystyle{siam}
\bibliography{bibliography1}

\end{document}